\title{Suppression of chemotactic singularity by buoyancy}
\author{Zhongtian Hu\thanks{
                  Department of
Mathematics, Duke University, Durham, NC, 27708, USA; email: zhongtian.hu@duke.edu}
\and Alexander Kiselev
\thanks{Department of
Mathematics, Duke University, Durham, NC, 27708, USA; email: kiselev@math.duke.edu}
\and Yao Yao
\thanks{Department of Mathematics, National University of Singapore, 119076 Singapore; email: yaoyao@nus.edu.sg}
        }
\numberwithin{equation}{section}
\definecolor{purple}{rgb}{0.5, 0, 1}
\definecolor{orange}{rgb}{1,.5,0}
\newtheorem{thm}{Theorem}[section]
\newtheorem{lem}[thm]{Lemma}
\newtheorem{prop}[thm]{Proposition}
\newtheorem{rmk}[thm]{Remark}
\newtheorem{cor}[thm]{Corollary}
\newtheorem{definition}[thm]{Definition}
\newcommand{\R}{\mathbb{R}}
\newcommand{\N}{\mathbb{N}}
\newcommand{\Z}{\mathbb{Z}}
\newcommand{\T}{\mathbb{T}}
\newcommand{\mC}{\mathcal{C}}
\newcommand{\brho}{\bar\rho}
\newcommand{\trho}{\tilde{\rho}}
\newcommand{\barf}{\bar f}
\newcommand{\tf}{\tilde{f}}
\newcommand{\aaa}{a}
\DeclareMathOperator{\divv}{div}
\DeclareMathOperator*{\esssup}{ess\,sup}
\newcommand{\p}{\partial}
\begin{document}
\newpage

\maketitle

\begin{abstract}
Chemotactic singularity formation in the context of the Patlak-Keller-Segel equation is an extensively studied phenomenon.
In recent years, it has been shown that the presence of fluid advection can arrest the singularity formation given that
the fluid flow possesses mixing or diffusion enhancing properties and its amplitude is sufficiently strong -
this effect is conjectured to hold for more general classes of nonlinear PDEs.
In this paper, we consider the Patlak-Keller-Segel equation coupled  with a fluid flow that obeys
Darcy's law for incompressible porous media via buoyancy force. We prove that in contrast with passive advection, this active fluid
coupling is capable of suppressing singularity formation at arbitrary small coupling strength: namely, the system
always has globally regular solutions.
\end{abstract}



\section{Introduction}

There are many mechanisms for regularity in partial differential equations. Sometimes, like in the 2D Euler equation,
there is a controlled quantity that is sufficient to prove local and global regularity of solutions.
There is a number of specific mechanisms that can confer global regularity to an otherwise potentially singular equation.
Perhaps the most common and simple ones are viscosity or diffusion (like for the viscous Burgers equation),
or just strong enough damping. A more sophisticated regularity mechanism which in many instances is nevertheless well understood is dispersion
(see e.g. \cite{LP}). Our focus in this paper is on regularization by fluid flow advection, a purely transport term of the form $(u \cdot \nabla)\rho.$
Fluid flow is naturally present in many settings. It is not expected to be smoothing by itself, but can aid global regularity via mixing/diffusion enhancing,
or dimensionality reduction effects. These phenomena are relatively well understood in the passive advection setting, where the vector field $u$
is given and does not depend on the advected quantity. Examples include certain limiting regimes in homogenization, where the fluid flow leads to
higher renormalized diffusion (e.g. \cite{FP}), flows with good mixing properties that can enhance dissipation (e.g. \cite{CZDE}) or strong shear flows
that can elevate the relative power of diffusion by reducing the effective dimension of the problem (e.g. \cite{BH}). In all these examples, it is important that the
flow has large amplitude. On the other hand, the potential regularizing role of advection is not yet well understood in an active setting, where flow is not
prescribed but determined by an equation that also involves the advected quantity. One fundamental example here is the three dimensional Euler and Navier-Stokes equations,
where global regularity is not known, but there is evidence that model equations for vorticity where the advective part is omitted lead to singularities
(e.g. \cite{PC,hou2012singularity}). Another natural setting is aggregation equations with fluid transport. It is in this area that we discover an intriguing
phenomenon: a variant of the Patlak-Keller-Segel equation, the much studied model coming from mathematical biology and well-known to feature finite time singularity formation, is completely
regularized by an arbitrarily weak coupling to a simple fluid equation.      

The Patlak-Keller-Segel equation is a fundamental model of chemotaxis \cite{KS,Patlak}.
It describes a population of bacteria or slime mold
that move in response to attractive external chemical that they themselves secrete.
Here we are interested in its parabolic-elliptic form
\begin{equation}\label{ks1}
\p_t\rho - \Delta \rho + \divv(\rho\nabla c) =0, \,\,\, -\Delta c =\rho - \rho_M, \,\,\,\rho(x,0)=\rho_0(x).
\end{equation}
We can think of \eqref{ks1} set on a finite domain, with the most natural Neumann boundary conditions for $\rho$ and $c;$
in this case $\rho_M$ is the mean of the population density $\rho$, and $c$ is the attractive chemical produced by the bacteria themselves.
Diffusion and production of the chemical $c$ are assumed to be much faster than other time scales of the problem, leading
to the elliptic equation for $c$ in \eqref{ks1}.
The equation \eqref{ks1} has dramatic analytic properties:
in particular, its solutions can form singularities in finite time in dimensions greater than one.
Finite time singularity formation in solutions of the Patlak-Keller-Segel model has been a focus of extensive research (see e.g. \cite{Pert}).
It is known that in two dimensions, finite time blowup is controlled by the mass of $\rho$, with the critical mass being $8\pi$:
all initial data with mass greater than critical lead to a finite time singularity \cite{HV1,HV2, RS, CGMN}, while initial data with mass smaller or equal to
critical lead to globally regular solutions \cite{JL, BDP, BCM}.

Often, chemotactic processes take place in ambient fluid. One natural question is then how the presence of fluid flow can affect singularity formation.
In \cite{KX}, this question was studied for the case where the flow is passive - it is given and does not depend on the density, resulting
in the term $A (u \cdot \nabla) \rho$ added on the left side of \eqref{ks1}.
The main point was a connection between mixing properties of the flow and its ability to suppress chemotactic explosion.
It was proved that given an initial data $\rho_0$ and a flow with strong mixing properties, there exists a flow amplitude $A(\rho_0)$
such that solutions of \eqref{ks1} stay globally regular. The classes of flows for which the result of \cite{KX} holds include relaxation enhancing flows
of \cite{CKRZ}, mixing flows constructed in \cite{YZ}, or self-similar flows of \cite{ACM}.
More research followed: for example, in \cite{BH} chemotactic singularity suppression by strong shear flow has been investigated. In this case the shear works
as a dimension reduction mechanism - and since in one dimension the solutions of the Patlak-Keller-Segel equation are regular, one can expect
suppression of blowup in two dimensions. In \cite{HT}, hyperbolic splitting flow has been studied; in \cite{CDFM} advection induced regularity has been
explored for Kuramoto-Sivashinsky equation. The paper \cite{GXZ} put forward a more general formalism for suppression of singularity formation by fluid advection for a class
of nonlinearities, and considered more general types of passive fluid flows, such as cellular.

A very interesting question that has so far remained largely open is whether fluid advection can suppress chemotactic explosion if it is active, that is, if the Patlak-Keller-Segel equation
is coupled with some fluid mechanics equation. There have been many impressive works that analyzed such coupled systems, usually via buoyancy force;
see for example \cite{di2010chemotaxis,duan2010global,lorz2010coupled,
liu2011coupled,lorz2012coupled,Winkler2012, CKL, DX, TW, Winkler2021} where further references can be found. Active coupling makes the system much more challenging to analyze,
but in some cases results involving global existence of classical solutions
 (the precise notion of their regularity is different in different papers) have been proved.
These results, however, apply either in the settings
where the initial data satisfy some smallness assumptions (e.g. \cite{duan2010global, lorz2012coupled, CKL})
or in the systems where both fluid and chemotaxis equations may not form a singularity if not coupled (e.g. \cite{Winkler2012,  TW, Winkler2021}).
Very recently, in \cite{Hact} and \cite{zzz}, the authors analyzed Patlak-Keller-Segel equation coupled to the Navier-Stokes equation near Couette flow.
Based on ideas of blowup suppression in shear flows and stability of the Couette flow,
the authors proved that global regularity can be enforced if the amplitude of the Couette flow is dominantly large and if the initial flow is very close to it.

We note that active fluid advection has been conjectured to regularize singular nonlinear dynamics in other settings. The most notable example is the case of the $3$D Navier-Stokes and Euler equations.
Constantin \cite{PC} has proved possibility of finite time singularity formation for the $3$D Euler equation in $\R^3$ if the pure advection term in the vorticity formulation is removed from the model.
Hou and Lei have obtained numerical evidence for finite time blowup in a system obtained from the $3$D Navier-Stokes equation by the removal of the pure transport terms \cite{lei2009stabilizing}.
In fact, finite time blowup has been also proved rigorously in some related modified model settings \cite{hou2011singularity,hou2012singularity}.
Of course, the proof of the global regularity for the $3$D  Navier-Stokes remains an outstanding open problem, so whether the $3$D Navier-Stokes equation exhibits ``advection regularization'' is an open question.
Interestingly, a similar effect is present even in a one-dimensional model of the 3D Euler equation due to De Gregorio \cite{DG}. While the global regularity vs finite time blowup question
is open for this model, the Constantin-Lax-Majda equation that is obtained from it by dropping the advection term leads to finite time singularity formation \cite{CLM}.
Moreover, recent works \cite{JSS} and \cite{Chen} establish global regularity for the full De Gregorio model near the main steady state and under certain symmetry assumptions, respectively.

In this paper, we consider the following system set in $\Omega := \T \times [0,\pi]$:
\begin{equation}
    \label{eq:KSIPM}
    \begin{cases}
        \p_t\rho + u\cdot \nabla\rho - \Delta \rho + \divv(\rho\nabla (-\Delta_N)^{-1}(\rho - \rho_M)) = 0,\\
        u + \nabla p = -g\rho e_2,\; \divv u = 0,\\
        \rho(x,0) = \rho_0(x) \ge 0,\;  \\
                \frac{\p \rho}{\p n}|_{\p \Omega} = 0,\;u\cdot n|_{\p \Omega} = 0,\; x = (x_1,x_2) \in \Omega,
    \end{cases}
\end{equation}
where $\T =(-\pi,\pi]$ is the circle, $\rho_M = \frac{1}{2\pi^2}\int_\Omega \rho dx$, $\Delta_N$ is the Neumann Laplacian and $g\in\mathbb{R}$ is the Rayleigh number representing the strength of buoyancy.
By maximum principle, the solution $\rho(t,x)$ remains nonnegative given that it remains regular.
We keep the same equation for the chemical $c$ as in \eqref{ks1} assuming that its diffusion and production remain faster than other relevant time scales.

The system \eqref{eq:KSIPM} comprises perhaps the most widely studied form of the Patlak-Keller-Segel equation in the parabolic-elliptic version coupled
with a fluid flow evolving according to Darcy's law, a common model of flow in porous media.
 The two equations are coupled via buoyancy
force. Note that when $\rho$ is only advected by $u$ (without diffusion or aggregation), \eqref{eq:KSIPM} becomes the incompressible porous media (IPM) equation, where it is an open question whether smooth initial density can lead to a finite time blowup - although global well-posedness is known when the initial data is close to certain stable
steady states \cite{CGO, ElgindiIPM, CCL}.

Based on the results quoted above regarding chemotaxis suppression by passive advection, one might hope that a similar effect
can be present in \eqref{eq:KSIPM}: given the initial data, one can find sufficiently large $g$ so that chemotactic blowup will be prevented.
The intuition behind such result would be that the buoyancy force tends to stratify the density, inducing and maintaining the configuration close to
one-dimensional and thus changing the balance between chemotaxis and diffusion - resulting in the global regularity.

Surprisingly, it turns out that in fact
chemotactic explosion is prevented for arbitrarily weak coupling $g$: namely, for every $g \ne 0$, for all sufficiently regular initial data, the solutions
stay smooth for all times. The main result of this paper is the following theorem:

\begin{thm}\label{mainthm}
Let $g \ne 0.$  Then for every initial data $\rho_0 \in C^\infty(\Omega),$ the solution to the Patlak-Keller-Segel-IPM system \eqref{eq:KSIPM} is globally regular:
$C^\infty$ in both space and time.
\end{thm}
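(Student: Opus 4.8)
\textit{The plan} is to argue by contradiction from a finite maximal existence time $T^\ast<\infty$, exploiting the structure of Darcy's law. Taking the two–dimensional curl of $u+\nabla p=g\rho e_2$ gives $\nabla^\perp\!\cdot u=g\,\partial_{x_1}\rho$, so $u=-g\,\nabla^\perp(-\Delta)^{-1}\partial_{x_1}\rho$ with the Neumann/no–penetration boundary conditions built in. Thus $u$ is, modulo the mean, a \emph{zeroth–order} singular integral of $g\rho$: one has $\|u\|_{L^2}\le|g|\,\|\rho-\rho_M\|_{L^2}$, but the induced flow is large precisely where $\rho$ concentrates. The essential point is a scale count: at spatial scale $\lambda$ a blob of mass $m$ produces a chemotactic drift $|\nabla c|\sim m/\lambda$ but a buoyancy velocity $|u|\sim|g|m/\lambda^2$, larger by the factor $|g|/\lambda$. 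Hence for \emph{any} $g\neq0$ the self–generated flow overwhelms the chemotactic attraction once a putative singularity has focused below scale $|g|$; this is why an arbitrarily weak coupling suffices, and also why the advection term cannot be treated perturbatively.

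Two facts carry the argument. First, since $u$ is divergence–free with $u\cdot n=0$, the advection drops out of every Lyapunov identity $\tfrac{d}{dt}\int_\Omega G(\rho)\,dx$; consequently the free–energy and $L^p$ estimates — hence the continuation criterion — coincide with those of the Patlak–Keller–Segel equation, so the solution is smooth on $[0,T^\ast)$, and by the same concentration argument as for Patlak–Keller–Segel a singularity at $T^\ast$ is a concentration of at least critical mass at a single point $x_0\in\overline\Omega$ ($8\pi$ at an interior point, $4\pi$ on $\partial\Omega$), at some length scale $\lambda(t)\to0$. Second, testing the density equation against $x_2$ and using $\partial_n\rho=\partial_n c=0$, $u\cdot n=0$ together with Darcy's law in the form $\int_\Omega\rho\,u_2\,dx=\tfrac1g\|u\|_{L^2}^2$, one obtains for the potential energy $\mathcal P(t):=\int_\Omega\rho\,x_2\,dx\in[0,\pi M]$ ($M$ the conserved mass) the identity
\[
\frac{d}{dt}\mathcal P=\frac1g\|u\|_{L^2}^2-\Phi+\int_\Omega\rho\,\partial_{x_2}c\,dx,\qquad \Phi(t):=\int_{x_2=\pi}\rho\,dx_1-\int_{x_2=0}\rho\,dx_1,
\]
in which $\Phi$ and the chemotactic flux $\int_\Omega\rho\,\partial_{x_2}c$ do not involve $u$. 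Integrating in time and using $0\le\mathcal P\le\pi M$ gives, for either sign of $g$, $\tfrac1{|g|}\int_0^T\|u\|_{L^2}^2\,dt\le\pi M+\int_0^T\big(|\Phi|+\big|\int_\Omega\rho\,\partial_{x_2}c\,dx\big|\big)\,dt$ for every $T<T^\ast$.

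The contradiction comes from comparing the two sides of this bound near $T^\ast$. On one side, the singular part of $\rho$ at scale $\lambda(t)$ satisfies $\|\rho_{\mathrm{sing}}(t)\|_{L^2}^2\sim m^2/\lambda(t)^2$, and — being a point concentration, and (as one shows) asymptotically round at its own scale — has horizontal–frequency content comparable to its total, whence $\|u(t)\|_{L^2}^2\gtrsim g^2m^2/\lambda(t)^2$; since any such concentration proceeds at least at the parabolic rate, $\lambda(t)^{-2}$ is not integrable near $T^\ast$ and $\int_0^{T^\ast}\|u\|_{L^2}^2\,dt=\infty$. On the other side, the flux terms involve only one power of the concentrating density through a trace or a first moment, so they grow at most like $\lambda(t)^{-1}$ — an interior concentration even leaves $\Phi$ and $\int_\Omega\rho\,\partial_{x_2}c$ bounded, by local parabolic regularity away from $x_0$, while a boundary concentration produces a trace of size $\sim m/\lambda(t)$ — and $\int_0^{T^\ast}\lambda(t)^{-1}\,dt<\infty$. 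Thus the right side of the bound stays finite, which is impossible; hence $T^\ast=\infty$. Local well–posedness in $H^s(\Omega)$ for large $s$ and nonnegativity of $\rho$ by the maximum principle are routine and would be disposed of first.

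\textit{The main obstacle} is the quantitative lower bound $\int_0^{T^\ast}\|u\|_{L^2}^2\,dt=\infty$. Because the advection is genuinely singular at small scales — it \emph{dominates} the chemotaxis there — one cannot simply import the fine blow–up asymptotics known for the uncoupled equation; the work is to show the flow cannot defeat the two ingredients $\|u(t)\|_{L^2}\gtrsim|g|\,\|\rho(t)-\overline\rho(t)\|_{L^2}$ (with $\overline\rho$ the horizontal average) and $\|\rho(t)-\overline\rho(t)\|_{L^2}\to\infty$ at a rate whose square is not time–integrable. Equivalently: a finite–time singularity cannot be asymptotically horizontally stratified — which is intuitively forced, since a stratified density produces \emph{no} flow and the system then degenerates to the one–dimensional Patlak–Keller–Segel equation, which is globally regular — but making it quantitative, and in particular ruling out a degenerate concentration that is thin in $x_2$ (for which $u$ could be small), is the crux. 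The boundary–concentration case needs additional care; there one should replace the test function $x_2$ by one adapted to the Neumann condition, so that $\Phi$ is expressed through lower–order traces.
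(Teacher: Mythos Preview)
Your potential energy framework is correct and matches the paper's: with $u=g\nabla^\perp(-\Delta_D)^{-1}\partial_{x_1}\rho$ one has $\tfrac1g\|u\|_{L^2}^2=g\|\partial_{x_1}\rho\|_{\dot H^{-1}_0}^2$, and this is indeed the main term in $\mathcal P'(t)$. But the contradiction you propose --- that $\int_0^{T^\ast}\|u\|_{L^2}^2\,dt=\infty$ while the flux terms stay integrable --- rests on claims you yourself flag as unproved. The assertion that the singular part is ``asymptotically round at its own scale'' is precisely the issue: nothing in your argument rules out a scenario where $\|\rho-\bar\rho\|_{L^2}$ stays small compared to $\|\rho-\rho_M\|_{L^2}$ as the latter blows up, and in that scenario $\|u\|_{L^2}^2=g^2\|\partial_{x_1}\rho\|_{\dot H^{-1}_0}^2$ may well remain time-integrable. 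You cannot import Keller--Segel blowup asymptotics here because, as you note, the advection dominates at small scales, so appealing to ``parabolic rate'' or ``roundness'' is circular. The integrability of the flux terms is also not secure: the paper's pointwise bound on the Keller--Segel term is only $\lesssim\|\rho-\rho_M\|_{L^2}^{4/3}$, which is not obviously integrable up to $T^\ast$, and the diffusion flux $\Phi$ is controlled only through $\|\nabla\rho\|_{L^2}$, not by any a priori bound.

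The paper resolves exactly the obstacle you identify, but by a mechanism that requires no information about the blowup profile. The key input is a quantitative sharpening of Nash's inequality: if $\|\partial_{x_1}\rho\|_{\dot H^{-1}_0}^2\le N^{-1}\|\tilde\rho\|_{L^2}^2$ then $\|\tilde\rho\|_{L^2}^2\le CN^{-1/4}\|\tilde\rho\|_{L^1}\|\nabla\tilde\rho\|_{L^2}$. This creates a dichotomy: either $\|\partial_{x_1}\rho\|_{\dot H^{-1}_0}^2$ is comparable to $\|\tilde\rho\|_{L^2}^2$ (and the main term in $\mathcal P'$ is large), or the density is so well-mixed in $x_1$ that dissipation beats aggregation by a factor $N^{1/2}$ and $\|\rho-\rho_M\|_{L^2}^2$ must decrease. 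The paper then partitions $[0,T^\ast)$ into intervals on which $\|\rho-\rho_M\|_{L^2}^2$ moves between consecutive dyadic levels, proves that on each ``good'' (doubling) interval at level $N$ the potential energy drops by at least $c_1g/N$ while on each ``bad'' (halving) interval it rises by at most $C_12^{-N/3}$, and uses a counting argument (good intervals outnumber bad at each level) together with divergence of $\sum N^{-1}$ to force $E<0$. The crux is thus not a scaling comparison at a blowup point but a functional inequality that converts smallness of $\|\partial_{x_1}\rho\|_{\dot H^{-1}_0}$ directly into enhanced dissipation --- this is the missing idea in your proposal.
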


{\begin{rmk}
It is known from \cite[Theorem 8.1]{KX} that there exists initial data $\rho_0$ with sufficiently large mass for which a blowup occurs for the Keller-Segel equation in a two-dimensional torus $\T^2$. While we consider a slightly different domain $\Omega$, the proof of \cite[Theorem 8.1]{KX} can be adapted to our case via a minor modification, since the blowup happens away from the boundary.
\end{rmk}}

\begin{rmk} \rm 1. In the rest of this paper, we will only consider the case $g>0,$ as the argument for negative $g$ is analogous. \\[0.1cm]
2. We do not pursue the sharpest result in terms of the class of initial data; standard methods allow to consider $\rho_0 \in H^2.$  \\[0.1cm]
3. Our argument can be generalized in several ways to different systems and settings, to be explored in future works. Here we point out
that, for example, changing the $-\Delta c = \rho - \rho_M$ law for the production of chemical to another commonly used law $-\Delta c +c = \rho$
leads to the same result as Theorem \ref{mainthm}. There are some straightforward adjustments to the proof one needs to make, mainly in the energy
estimates of Section~\ref{enest}.\\
{4. It is expected that our main result can be extended to arbitrary compact, smooth domains. While some technical components of this work (e.g. Proposition \ref{prop:concentration326}) relies on Fourier analysis, we expect that these results could still be proven by purely physical-side techniques.}
\end{rmk}

On the intuitive level, one may argue that this phenomenon is possible since even for small $g,$ the density
tending to infinity will make the buoyancy force strong near top concentrations, enabling local stratification that might arrest the blowup.
Such intuition seems hard to turn into a rigorous
argument, and this is not how the proof works. Instead we show, roughly, that for the growth in the $L^2$ norm of density one has to pay with decay in potential energy (to be defined in \eqref{def_potential}).
Given that there is a finite balance of potential energy that one can draw on, it turns out insufficient to grow the $L^2$ norm to infinity.
On the other hand, the $L^2$ norm controls higher regularity, thus preventing any other type of singularity formation as well.

Implementing this plan
involves some nuances and requires understanding of the interaction of several competing mechanisms - diffusion, advection, and chemotaxis.
A key role is played by the observation that the ``main term'' in the derivative of the potential energy in the regime of large $L^2$ norm
is the $H_0^{-1}$ norm squared of $\p_{x_1}\rho.$
Intuitively, due to the finite balance of the potential energy, this means that $\|\p_{x_1} \rho\|^2_{H_0^{-1}}$ should decay. Since decay of the $H^{-1}$
norm of the solution is a well known measure of mixing (see e.g \cite{IKX,LLMND,MMP}), this can be interpreted as solution becoming well-homogenized
via active nonlinear mixing in the $x_1$ direction. Hence, in a certain sense, the solution becomes quasi-one-dimensional and singularity formation
does not happen. We note that the role of the potential energy and $\|\p_{x_1} \rho\|^2_{H_0^{-1}}$ in small scale creation for solutions of the
incompressible porous media equation was first explored by the last two authors in \cite{KY}.

To the best of our knowledge, Theorem~\ref{mainthm} is the first result proving blowup suppression by active advection in a fully nonlinear
setting far away from any perturbative regimes. We stress that in the setting of the system \eqref{eq:KSIPM} we do not expect simple
limiting dynamics, such as convergence to some symmetric steady state that is typical for aggregation equations in regular regimes
(see e.g. \cite{BCL, BCC, CF,  CHVY} and the references therein).
Instead, the dynamics likely stays complex and perhaps turbulent for all times but nevertheless global regularity is preserved.
It is not difficult to upgrade the arguments to obtain global bounds on Sobolev norms of $\rho$ and $u$ ($g$-dependent), but beyond this
it seems challenging to gain more precise information about solutions.

\section{Preliminaries}
\subsection{Notation and conventions}
For any function $f \in L^2(\Omega)$, we denote $f_M$ the mean of $f$ on $\Omega:$ $f_M := \frac{1}{2\pi^2} \int_\Omega f(x,t)\,dx$. We also consider the following decomposition $f = \barf + \tf$, where
 $$
        \barf(x_2) := \frac{1}{2\pi}\int_{\T}f(y_1,x_2) dy_1,\; \tf := f -\barf.
$$
Note that $\barf$ is exactly the projection of $f$ onto the zeroth mode corresponding to direction $x_1$, and $\tf$ lies in the orthogonal complement:
$$
\int_\Omega (\barf(x_2) - f_M)\tf(x_1, x_2)dx_1dx_2 = 0,
$$
{which is due to a stronger fact that $\int_\T \tilde{f}(x_1,x_2)dx_1 = 0$.} Any function $g \in L^p([0,\pi])$, $p \in [1,\infty)$ one can identify with a function in $L^p(\Omega)$ by the correspondence $g(x_1,x_2) = g(x_2)$. One then can connect the one-dimensional and two-dimensional $L^p$ norms by a simple relation $\|g\|_{L^p(\Omega)}^p = 2\pi\|g\|_{L^p([0,\pi])}^p$. For clarity, we will always refer to the two-dimensional $L^2$ norm when we write $\|g\|_{L^2}$. We will use the notation $\|\cdot\|_{L^p([0,\pi])}$ when we emphasize its one-dimensional nature.

The Darcy's law in \eqref{eq:KSIPM} can be simplified by writing $u = \nabla^\perp \psi$ (where $\nabla^\perp =(-\p_{x_2}, \p_{x_1})$), and applying $\nabla^\perp$ to this equation. Note that no flux boundary condition for $u$ corresponds
to the Dirichlet boundary condition for the stream function $\psi.$ Solving for $u$ directly in terms of density results in the relationship
\begin{equation}\label{uBS}
u = g\nabla^\perp(-\Delta_D)^{-1}\p_{x_1}\rho,
\end{equation}
where $\Delta_D$ is the Dirichlet Laplacian.

We will denote a universal constant in the upper bounds by $C$ or $C_i$, a universal constant in the lower bounds by $c$ or $c_i$, and a constant depending on a quantity $X$ by $C(X)$ or $C_i(X)$.
These constants are all positive and subject to change from line to line. For two quantities $A$, $B$, we write $A \lesssim B$ (respectively $A \gtrsim B$) to mean that $A \le CB$  (respectively $A \geq cB$) for some positive constants $C$ and $c$ that may only depends on domain $\Omega$.

\subsection{Functional spaces and eigenfunction expansions}

We denote $\dot H^1(\Omega)$ the homogeneous Sobolev space with semi-norm $\|f\|_1^2 = \int_\Omega |\nabla f|^2\,dx.$
We define the space $\dot H^1_0(\Omega)$ to be the completion of $C^\infty_c(\Omega)$ with respect to norm $\|\cdot\|_1$, and consider its dual space $ \dot H^{-1}_0(\Omega)$. We remark that one can equivalently set
$$
\|f\|_{\dot H^{-1}_0} = \left(\int_\Omega f(-\Delta_D)^{-1}f dx\right)^{1/2}.
$$

Since the problem contains both Dirichlet and Neumann Laplacians, we will find it necessary to use two different eigenfunction expansions.

For $f \in L^2(\Omega),$ define the Dirichlet eigenfunction transform by
\begin{align*}
f(x_1,x_2) &= \frac{1}{\pi} \sum_{k_2=1}^\infty \sum_{k_1 \in \Z}\hat{f}_D(k_1,k_2)e^{ik_1x_1}\sin(k_2 x_2); \\
\hat{f}_D(k_1,k_2) &:= \frac{1}{\pi}\int_\T \int_0^{\pi} f(x_1,x_2)e^{-ik_1 x_1}\sin(k_2 x_2)dx_2dx_1.
\end{align*}
Observe that on the Fourier side,
\[ \|f\|_{\dot H^{-1}_0}^2 = \sum_{k_2=1}^\infty \sum_{k_1 \in \Z}(k_1^2 + k_2^2)^{-1}\big|\hat{f}_D(k_1,k_2)\big|^2. \]

The Neumann eigenfunction transform is given by
\begin{align}
 f(x_1,x_2) &=
\nonumber  \frac{1}{\pi} \sum_{k_2=0}^\infty \sum_{k_1 \in \Z} \frac{1}{1+\delta(k_2)}\hat{f}_N(k_1,k_2)e^{ik_1x_1}\cos(k_2 x_2); \\
\hat{f}_N(k_1,k_2) &:=
\frac{1}{\pi}\int_\T \int_0^{\pi} f(x_1,x_2)e^{-ik_1 x_1}\cos(k_2 x_2)dx_2dx_1, \label{aux1923}
\end{align}
where $\delta(k_2)=1$ if $k_2=0$ and $\delta(k_2)=0$ otherwise.
For a function $f \in \dot H^1(\Omega),$ we have
\[ \|f\|_{\dot H^1}^2 = \sum_{k_2=0}^\infty \sum_{k_1 \in \Z}(k_1^2 + k_2^2)\big|\hat{f}_N(k_1,k_2)\big|^2. \]


\section{Local well-posedness and conditional regularity}

Here we comment briefly on the local well-posedness of system \eqref{eq:KSIPM} and state a conditional regularity result which shows that the $L^2$ norm of the density controls possible blowup.
In fact, we remark that there already exists extensive literature which addresses the issue of local well-posedness of Patlak-Keller-Segel equation coupled to various types of fluid equations (e.g. \cite{Winkler2012,Winkler2021}).
For this reason we provide just a rough outline of the proof; more details can be found for example in \cite{Winkler2012}.

\begin{thm}
\label{thm:lwp}
Let $\rho_0 \in C(\Omega)$ be nonnegative. Then there exists a time $T = T(\rho_0) > 0$ such that $\rho(x,t)$ is the unique, nonnegative, regular solution solving \eqref{eq:KSIPM}, with $\rho \in C^\infty(\Omega \times (0,T))$. If $[0,T)$ is the maximal lifespan of $\rho(x,t)$, then
\begin{equation}
\label{Linftycriterion}
\lim_{t \nearrow T}\|\rho(\cdot,t)-\rho_M\|_{L^\infty} = \infty.
\end{equation}
\end{thm}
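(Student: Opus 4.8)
The plan is to prove Theorem~\ref{thm:lwp} via a fixed-point/continuation argument. First I would set up local existence: linearize the system by freezing the advecting velocity and the chemoattractant gradient, obtaining a linear parabolic problem for $\rho$ with Neumann boundary conditions, and solve it using the heat semigroup $e^{t\Delta_N}$ together with the smoothing estimates $\|e^{t\Delta_N}f\|_{L^q} \lesssim t^{-\frac{d}{2}(\frac1p - \frac1q)}\|f\|_{L^p}$ and $\|\nabla e^{t\Delta_N}f\|_{L^q} \lesssim t^{-\frac12 - \frac{d}{2}(\frac1p-\frac1q)}\|f\|_{L^p}$ on $\Omega$. The Duhamel formulation is
\begin{equation*}
\rho(t) = e^{t\Delta_N}\rho_0 - \int_0^t \nabla\cdot e^{(t-s)\Delta_N}\big(\rho(s)u(s) + \rho(s)\nabla(-\Delta_N)^{-1}(\rho(s)-\rho_M)\big)\,ds,
\end{equation*}
where $u = g\nabla^\perp(-\Delta_D)^{-1}\partial_{x_1}\rho$ by \eqref{uBS}; note both $u$ and $\nabla(-\Delta_N)^{-1}(\rho-\rho_M)$ gain one derivative over $\rho$, so both nonlinear terms are of the same type and their Lipschitz bounds in, say, $C([0,T];L^q(\Omega))$ for $q>d=2$ follow from elliptic regularity for $\Delta_D$ and $\Delta_N$ on $\Omega$ combined with the semigroup estimates above. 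A contraction mapping argument on a small time interval then yields a unique mild solution; nonnegativity is preserved by the maximum principle (the equation for $\rho$ is a drift-diffusion equation with no-flux boundary, and $\rho_0\ge 0$).

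Next I would bootstrap regularity: once $\rho \in C([0,T];L^q)$ with $q>2$, the velocity $u$ and drift $\nabla c$ lie in $C([0,T];W^{1,q})\hookrightarrow C([0,T];C^\alpha)$, so parabolic Schauder/maximal-regularity estimates upgrade $\rho$ to $C^{2+\alpha,1+\alpha/2}$ on $\Omega\times(0,T]$, and then a standard iteration (differentiating the equation, using that all coefficients are as smooth as $\rho$) gives $\rho\in C^\infty(\Omega\times(0,T))$. This also handles the compatibility of the Neumann condition for $\rho$ with the no-flux condition for $u$: since $u\cdot n = 0$ on $\partial\Omega$, the advective flux $\rho u$ has zero normal component automatically, so the natural boundary condition for the divergence-form equation is exactly $\partial_n\rho = 0$.

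For the blowup criterion \eqref{Linftycriterion}, I would argue by contradiction: suppose $[0,T)$ is maximal but $\limsup_{t\nearrow T}\|\rho(\cdot,t)-\rho_M\|_{L^\infty} = M < \infty$. Then mass conservation gives $\rho_M$ constant, so $\|\rho(\cdot,t)\|_{L^\infty}$ is bounded on $[0,T)$. This immediately bounds all $L^p$ norms, hence (by elliptic regularity) bounds $\|u(\cdot,t)\|_{W^{1,p}}$ and $\|\nabla c(\cdot,t)\|_{W^{1,p}}$ uniformly for $p<\infty$; in particular the drift field $b := u + \nabla c \in L^\infty([0,T);C^\alpha)$. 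Feeding this into the Duhamel formula and the parabolic smoothing estimates, I would show that $\|\rho(\cdot,t)\|_{C^{2+\alpha}}$ stays bounded as $t\nearrow T$ — the key point being that each time-derivative or spatial-derivative estimate closes because the nonlinear terms lose only one derivative, which the parabolic gain absorbs, and the $L^\infty$ bound on $\rho$ provides the needed coefficient control with no loss. Bounded higher norms up to time $T$ contradict maximality of the lifespan, since one could then restart the local existence theorem from a time close to $T$ with uniformly bounded data and extend past $T$.

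The main obstacle is purely bookkeeping rather than conceptual: carefully tracking the boundary conditions through the two different inverse Laplacians ($\Delta_D$ for the stream function, $\Delta_N$ for the chemoattractant) and verifying that elliptic regularity up to $\partial\Omega$ holds in the Hölder and $W^{1,p}$ scales on the mixed domain $\Omega = \T\times[0,\pi]$, so that the composite drift $b$ genuinely has the claimed regularity. Everything else is a standard contraction-plus-bootstrap argument, which is why — as the authors note — we only sketch it here and refer to \cite{Winkler2012} for the analogous details.
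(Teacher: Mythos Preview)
Your proposal is correct and follows essentially the same route as the paper: a Duhamel/fixed-point argument using the Neumann heat semigroup (the paper works directly in $X=L^\infty([0,T];C(\Omega))$ with the specific exponents $q=6$, $\beta=1/3$ from \cite{Winkler2012}, whereas you contract in $C([0,T];L^q)$ for $q>2$ and then bootstrap), followed by the standard continuation argument for the blowup criterion. One small remark: to obtain $\lim$ rather than merely $\limsup$ in \eqref{Linftycriterion}, your contradiction hypothesis should be $\liminf_{t\nearrow T}\|\rho-\rho_M\|_{L^\infty}<\infty$, but the restart-near-$T$ argument you sketch already handles that case unchanged.
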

\begin{proof}
We follow the plan explained in \cite[Lemma 2.1]{Winkler2012}.
Namely, we apply the standard fixed point method, and note that the regularity criterion \eqref{Linftycriterion} arises naturally from the Banach space that we choose.
Consider the Banach space
$$
X := L^\infty([0,T]; C(\Omega)),
$$
{equipped with norm $\|\cdot\|_X = \esssup_{t \in [0,T]}\|\cdot\|_{L^\infty(\Omega)}$.} Here, $T$ will be taken small later according to the initial data. Since each component of $u$ is a Riesz transform of $\rho$, we have the classical Calder\'on-Zygmund estimate:
$$
\|u\|_{L^q} \leq C(q) \|\rho\|_{L^q} \leq C(q)( \|\rho - \rho_M\|_{L^q} + \rho_M)\quad  \text{for any }q\in(1,\infty).
$$
Following the notation in \cite[Lemma 2.1]{Winkler2012}, we consider a fixed-point scheme and define the functional
$$
\Phi(\rho)(x,t) := e^{t\Delta_N}\rho_0 - \int_0^te^{(t-s)\Delta_N}\divv(\rho\nabla(-\Delta_N)^{-1}(\rho - \rho_M) + \rho u)(s,x)ds.
$$
Now, let us fix $q = 6$, $\beta = 1/3$ (note in particular that such choice satisfies the constraint $q \in (2,\infty)$, $\beta \in (1/q, 1/2)$). We follow \cite[equation (2.4)]{Winkler2012} to obtain that for $t \in (0,T)$,
\begin{equation}
\label{est:duhamel}
    \|\Phi(\rho)(\cdot,t)\|_{L^\infty} \le \|\rho_0\|_{L^\infty} + C\int_0^t (t-s)^{-\frac56}\|(\rho\nabla(-\Delta_N)^{-1}(\rho - \rho_M) + \rho u)(\cdot, s)\|_{L^6}ds,
\end{equation}
for some constant $C$. In fact, $C$ originates from standard Neumann heat semigroup estimate, and thus only depends on domain $\Omega$.

Then it suffices for us to control $\|(\rho\nabla(-\Delta_N)^{-1}(\rho - \rho_M) + \rho u)(\cdot, s)\|_{L^6}$ pointwise in time by $\|\rho(\cdot,s)\|_{L^\infty}$. Omitting dependence on time, we observe the following bounds:
\begin{align*}
    \|\rho\nabla(-\Delta_N)^{-1}(\rho - \rho_M)\|_{L^6} &\le \|\rho\|_{L^\infty}\|\nabla(-\Delta_N)^{-1}(\rho - \rho_M)\|_{L^6}\\
    &\lesssim \|\rho\|_{L^\infty}\|\nabla(-\Delta_N)^{-1}(\rho - \rho_M)\|_{\dot H^1}\\
    &\lesssim \|\rho\|_{L^\infty}\|\rho - \rho_M\|_{L^2} \lesssim \|\rho\|_{L^\infty}\|\rho - \rho_M\|_{L^\infty},\\
    \|\rho u\|_{L^6} &\le \|\rho\|_{L^\infty}\|u\|_{L^6} \lesssim \|\rho\|_{L^\infty}\|\rho\|_{L^6} \lesssim \|\rho\|_{L^\infty}^2,
\end{align*}
where we used the Sobolev embedding $\dot H^1 \subset L^6$ in the second inequality and elliptic estimate in the third inequality. Note that all constants involved in the estimates above only depend on domain $\Omega$, since this is the case for embedding inequalities and elliptic estimates. Inserting the estimates above back into \eqref{est:duhamel}, we conclude that
\begin{align*}
\|\Phi(\rho)(\cdot,t)\|_{L^\infty} &\le\|\rho_0\|_{L^\infty} + C(\|\rho\|_X\|\rho - \rho_M\|_X + \|\rho\|_X^2)T^{1/6}\\
&\le \|\rho_0\|_{L^\infty} + C(\|\rho - \rho_M\|_X^2 + \|\rho - \rho_M\|_X)T^{1/6},
\end{align*}
where $C$ only depends on domain $\Omega$. With this estimate, we may follow the argument in \cite[Lemma 2.1]{Winkler2012} to conclude the existence of regular solution and criterion \eqref{Linftycriterion}.
\end{proof}
Notice that the criterion \eqref{Linftycriterion} involves a (spatial) supercritical norm $\|\cdot\|_{L^\infty}$. In the following lemma, we improve this criterion and obtain a refined alternative that only involves the critical norm $\|\cdot\|_{L^2}$. A crucial fact rendering such improvement possible is that an $L^\infty_t L^2_x$ control of $\rho - \rho_M$ can be upgraded to an $L^\infty_t L^\infty_x$ control. Namely, we have
\begin{lem}
\label{lem:L2toLinfty}
Let $\rho_0 \in C^\infty(\Omega)$. Suppose that $\|\rho(\cdot, t) - \rho_M\|_{L^2} \le 2M$ for all $t \in [0,T]$ and some $M \ge \max(1, \sqrt{\rho_M})$. Then we also have $\|\rho(\cdot,t) - \rho_M\|_{L^\infty} \le CM^2$, where $C$ is a universal constant.
\end{lem}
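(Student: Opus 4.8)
The plan is to prove this by a standard parabolic regularity bootstrap: the hypothesis $\sup_{[0,T]}\|\rho-\rho_M\|_{L^2}\le 2M$ is used to control $\sup_{[0,T]}\|\rho(t)-\rho_M\|_{L^q}$ successively for $q=4,8,16,\dots$, and then one passes to $q=\infty$. The single most useful structural observation is that the advection term drops out of every $L^p$ energy identity: since $\divv u=0$ in $\Omega$ and $u\cdot n=0$ on $\p\Omega$, we have $\int_\Omega u\cdot\nabla(\rho^p)=-\int_\Omega(\divv u)\rho^p+\int_{\p\Omega}(u\cdot n)\rho^p=0$, so the velocity field plays no role in these estimates and the only genuinely nonlinear term to be tamed is the chemotactic one; along the way one also has at one's disposal the conserved mass $\|\rho\|_{L^1}=2\pi^2\rho_M\le 2\pi^2 M^2$.

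Concretely, multiplying the density equation by $p\rho^{p-1}$ and integrating, using $-\Delta c=\rho-\rho_M$ with the Neumann condition $\p_n c=0$ (here $c:=(-\Delta_N)^{-1}(\rho-\rho_M)$), one obtains for every even integer $p\ge 2$
\[
\frac{d}{dt}\|\rho\|_{L^p}^p+\frac{4(p-1)}{p}\big\|\nabla(\rho^{p/2})\big\|_{L^2}^2=(p-1)\int_\Omega\rho^{p}(\rho-\rho_M)\,dx\le(p-1)\int_\Omega\rho^{p+1}\,dx.
\]
I would then bound $\int_\Omega\rho^{p+1}=\|\rho^{p/2}\|_{L^{2(p+1)/p}}^{2(p+1)/p}$ by a two-dimensional Gagliardo--Nirenberg inequality, interpolating $\rho^{p/2}$ between $\dot H^1(\Omega)$ and a lower Lebesgue norm re-expressible through $\|\rho\|_{L^2}$ (for small $p$) or, inductively, through $\|\rho\|_{L^{p/2}}$; since the target exponent $2(p+1)/p$ is close to $2$, the interpolation constant stays controlled. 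A Young inequality then absorbs part of the right side into the dissipation, and the Poincar\'e inequality $\|\nabla(\rho^{p/2})\|_{L^2}^2\ge c\|\rho\|_{L^p}^p-C\|\rho\|_{L^{p/2}}^p$ turns the differential inequality into a Gr\"onwall inequality whose solution is bounded uniformly in $t$ (not merely on $[0,T]$), giving $\sup_{[0,T]}\|\rho(t)\|_{L^p}\le\|\rho_0\|_{L^p}+C(p)\,\Pi(M)$ for an explicit combination $\Pi$ of powers of $M$. Iterating over $p=2,4,8,\dots$, keeping careful track of the homogeneity (each estimate is of degree two in density units) and passing to $p\to\infty$ yields the desired $L^\infty$ bound. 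An equivalent route starts from the Duhamel formula $\rho(t)-\rho_M=e^{t\Delta_N}(\rho_0-\rho_M)-\int_0^t e^{(t-s)\Delta_N}\divv\!\big(\rho(u+\nabla c)\big)(s)\,ds$ and iterates the smoothing estimates $\|e^{\tau\Delta_N}\divv G\|_{L^q}\lesssim\tau^{-\frac12-\frac1p+\frac1q}e^{-c\tau}\|G\|_{L^p}$, handling $u$ by the Calder\'on--Zygmund bound $\|u\|_{L^q}\lesssim\|\rho-\rho_M\|_{L^q}$ and $\nabla c$ by elliptic regularity, in the spirit of the proof of Theorem~\ref{thm:lwp}.

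The delicate point---and the reason the statement records the precise power $M^2$ rather than merely ``$C(M)$''---is that the chemotactic nonlinearity is $L^2$-critical in two dimensions; it is the same mechanism responsible for the $8\pi$ critical mass for \eqref{ks1}, and naively iterating the $L^p$ estimates makes the power of $M$ drift above $2$. The main work is therefore to choose the Gagliardo--Nirenberg and Young constants sharply and to reference the correct intermediate norms (including the conserved mass) at each step so that the iteration closes at the borderline exponent $2$. A second, purely technical, point is that the free-evolution term $e^{t\Delta_N}(\rho_0-\rho_M)$ gains only $t^{-1/2}$ of smoothing from $L^2$ to $L^\infty$; for $t$ near $0$ the bound must be read together with the (finite, since $\rho_0\in C^\infty$) quantity $\|\rho_0-\rho_M\|_{L^\infty}$, or be established for $t$ bounded away from $0$ and patched using continuity of the smooth solution from Theorem~\ref{thm:lwp}. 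Either way the conclusion is strong enough for its intended use, namely upgrading the blowup criterion \eqref{Linftycriterion} to one involving only the critical norm $\|\rho-\rho_M\|_{L^2}$.
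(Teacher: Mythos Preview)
Your approach is essentially the same as the paper's: the paper does not give a self-contained argument but simply invokes \cite[Proposition~9.1]{KX}, which is precisely the $L^{2}\to L^{4}\to L^{8}\to\cdots\to L^{\infty}$ Moser-type iteration you sketch, using that the divergence-free transport term drops out of every $L^p$ energy identity and that the Neumann boundary conditions kill all boundary contributions. Your identification of the two delicate points (the borderline-critical scaling that forces careful bookkeeping to land on $M^2$, and the short-time dependence on $\|\rho_0\|_{L^\infty}$) is accurate; the latter is harmless for the only application of the lemma, namely Corollary~\ref{cor:L2criterion}.
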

\begin{proof}
    The proof follows verbatim from that of \cite[Proposition 9.1]{KX}, where it is handled for $\T^2$. This is indeed the case since all integrations by parts do not produce any boundary terms due to the Neumann boundary condition that $\rho$ satisfies.
\end{proof}
A refined regularity criterion is then an easy corollary of Theorem \ref{thm:lwp} and Lemma \ref{lem:L2toLinfty}.
\begin{cor}
    \label{cor:L2criterion}
    Suppose that $\rho_0 \in C^\infty(\Omega)$ is nonnegative. Assume $[0,T)$, $T < \infty$, is the maximal lifespan of the unique regular solution $\rho(x,t)$ to \eqref{eq:KSIPM}. Then
    \begin{equation}
        \label{eq:blowupcrit}
        \lim_{t \nearrow T}\|\rho(\cdot,t)-\rho_M\|_{L^2} = \infty.
    \end{equation}
\end{cor}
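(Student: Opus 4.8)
The plan is to argue by contradiction. Assume \eqref{eq:blowupcrit} fails, and write $N(t):=\|\rho(\cdot,t)-\rho_M\|_{L^2}$ (recall $\rho_M$ is conserved in time). I will derive a Riccati-type differential inequality for $N(t)^2$, use it to show that in fact $\sup_{t\in[0,T)}N(t)<\infty$, and then invoke Lemma~\ref{lem:L2toLinfty} together with the supercritical blowup criterion \eqref{Linftycriterion} from Theorem~\ref{thm:lwp} to reach a contradiction.

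First I would record the $L^2$ energy identity. Testing the density equation in \eqref{eq:KSIPM} against $\rho-\rho_M$ and integrating by parts --- the transport term drops by $\divv u=0$ and $u\cdot n|_{\partial\Omega}=0$, while the Neumann conditions on $\rho$ and on $(-\Delta_N)^{-1}(\rho-\rho_M)$ remove all remaining boundary terms --- gives
$$
\frac{d}{dt}\tfrac12 N(t)^2=-\|\nabla\rho\|_{L^2}^2+\tfrac12\int_\Omega \rho^2(\rho-\rho_M)\,dx.
$$
Discarding the nonpositive term $-\tfrac{\rho_M}{2}\int_\Omega\rho^2$ (recall $\rho_M\ge0$) and using $\rho\ge0$, the cubic term is at most $\tfrac12\|\rho\|_{L^3}^3$. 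Writing $\rho=(\rho-\rho_M)+\rho_M$ and applying the two-dimensional Gagliardo--Nirenberg--Poincar\'e inequality $\|f\|_{L^3}\lesssim\|f\|_{L^2}^{2/3}\|\nabla f\|_{L^2}^{1/3}$ to the mean-zero function $\rho-\rho_M$ gives $\|\rho\|_{L^3}^3\lesssim N(t)^2\|\nabla\rho\|_{L^2}+C(\rho_0)$, after which Young's inequality absorbs the gradient factor into the dissipation $-\|\nabla\rho\|_{L^2}^2$. The outcome is the closed inequality
$$
\frac{d}{dt}N(t)^2\le C\bigl(1+N(t)^4\bigr),\qquad 0\le t<T,
$$
with $C$ depending only on $\rho_0$ through $\rho_M$.

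Rewriting this as $\frac{d}{dt}\arctan\bigl(N(t)^2\bigr)\le C$ gives $\arctan(N(t)^2)\le\arctan(N(s)^2)+C(t-s)$ for $0\le s\le t<T$, so starting from any time at which $N$ is moderate, $N^2$ remains finite for a definite stretch of time controlled only by that value. Since \eqref{eq:blowupcrit} is assumed to fail, $L:=\liminf_{t\nearrow T}N(t)<\infty$; pick $t_n\nearrow T$ with $N(t_n)\le L+1$ and $n$ large enough that $C(T-t_n)<\tfrac{\pi}{2}-\arctan\bigl((L+1)^2\bigr)$. The displayed inequality then bounds $N(t)^2$ on $[t_n,T)$ by the finite constant $\tan\bigl(\arctan((L+1)^2)+C(T-t_n)\bigr)$, and since $N$ is continuous, hence bounded, on $[0,t_n]$, we conclude $\sup_{t\in[0,T)}N(t)<\infty$. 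Picking $M\ge\max(1,\sqrt{\rho_M})$ with $N(t)\le 2M$ on $[0,T)$ and applying Lemma~\ref{lem:L2toLinfty} on each subinterval $[0,T']$, $T'<T$, yields $\|\rho(\cdot,t)-\rho_M\|_{L^\infty}\le CM^2$ with $C$ independent of $T'$, hence the bound holds on all of $[0,T)$; this contradicts \eqref{Linftycriterion}, so \eqref{eq:blowupcrit} must hold. The main point to be careful about is conceptual rather than computational: the naive combination of Lemma~\ref{lem:L2toLinfty} and \eqref{Linftycriterion} only gives $\limsup_{t\nearrow T}N(t)=\infty$, and upgrading this to the full limit asserted in \eqref{eq:blowupcrit} is precisely why the Riccati inequality --- which rules out oscillation of $N$ near $T$ --- is needed. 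The energy estimate itself (the correct Gagliardo--Nirenberg exponents in two dimensions and the bookkeeping of the nonzero mean $\rho_M$) is routine.
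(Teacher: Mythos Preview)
Your proof is correct and is essentially the natural way to fill in the details the paper omits (the paper simply states the corollary as an ``easy corollary of Theorem~\ref{thm:lwp} and Lemma~\ref{lem:L2toLinfty}'' without further argument). Your Riccati inequality $\frac{d}{dt}N^2\le C(1+N^4)$ is exactly the content of the paper's later Proposition~\ref{lem:naive}, and your observation that this inequality is genuinely needed --- since Lemma~\ref{lem:L2toLinfty} requires an $L^2$ bound on a full time interval, so the naive combination with \eqref{Linftycriterion} yields only $\limsup N(t)=\infty$ --- is the key conceptual point.
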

\begin{proof}
Combining Theorem \ref{thm:lwp} and Lemma \ref{lem:L2toLinfty} immediately imply that
\[
\limsup_{t \nearrow T} \|\rho(\cdot,t)-\rho_M\|_{L^2} = \infty.
\] Next we show that the limsup in the above expression can be replaced by lim.
Denote \[ Y(t) := \|\rho(\cdot,t)-\rho_M\|_{L^2}^2. \]
A simple energy estimate (see Proposition \ref{lem:naive} below) shows that $Y$ satisfies the following differential inequality for a smooth solution:
\[
Y' \leq C(Y^2 +Y) \quad \text{ for } t\in[0,T),
\]
where the constant $C$ may depend on the initial data. For any $0<t<t_k<T$, integrating this differential inequality in $[t,t_k]$ yields
\begin{equation}\label{aux215} Y(t) \geq \frac{1}{(Y(t_k)^{-1}+1)e^{C(t_k-t)}-1 } \quad\text{ for all } t\in [0,t_k]. \end{equation}
If $\limsup_{t\nearrow T}Y(t)=\infty$, by choosing a sequence of time $t_k \nearrow T$ such that $\lim_{k\to\infty} Y(t_k)=\infty$, the above bound becomes
 \[ Y(t) \geq \frac{1}{e^{C(T-t)}-1} \quad\text{ for all } t\in [0,T),\] and $\lim_{t\nearrow T}Y(t)=\infty$ follows.
\end{proof}

\section{Energy estimates}\label{enest}
In this section, we collect a few useful \textit{a priori} estimates.
In what follows, $N_0 \in \N$ will denote a sufficiently large natural number that depends only on the initial data and parameter $g$, and its value may change from line to line. In the course of the proof, we will impose a finite number of conditions on how large $N_0$ needs to be.
\begin{prop}
    \label{lem:mass}
    Assume $\rho(t,x)$ to be a regular, nonnegative solution to \eqref{eq:KSIPM} on $[0,T]$. Then $\|\rho(\cdot,t)\|_{L^1} = \|\rho_0\|_{L^1}=2\pi^2 \rho_M$ for any $t \in [0,T]$.
    \begin{proof}
        The proof easily follows from computing $\frac{d}{dt}\|\rho(\cdot,t)\|_{L^1} = \frac{d}{dt}\int_\Omega \rho dx$ (recall that $\rho \ge 0$) and using incompressibility of $u$.
    \end{proof}
\end{prop}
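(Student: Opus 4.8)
The plan is to prove that the total mass $\int_\Omega \rho(x,t)\,dx$ is conserved in time; since $\rho\ge 0$ along the regular solution (by the maximum principle, as noted after \eqref{eq:KSIPM}), this integral coincides with $\|\rho(\cdot,t)\|_{L^1}$, and dividing by $|\Omega| = 2\pi^2$ shows that $\rho_M$ is time-independent, which is exactly the content of the statement. The whole argument is a single differentiation under the integral sign, legitimate because $\rho(\cdot,t)\in C^\infty(\overline\Omega)$ on $[0,T]$ (and then the elliptic estimates make $c:=(-\Delta_N)^{-1}(\rho-\rho_M)$ smooth up to the boundary as well), so no boundary terms are lost.

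Concretely, I would first write $\frac{d}{dt}\int_\Omega \rho\,dx = \int_\Omega \p_t\rho\,dx$ and substitute the evolution equation, obtaining
\[
\frac{d}{dt}\int_\Omega \rho\,dx = \int_\Omega\Big(-\,u\cdot\nabla\rho + \Delta\rho - \divv\big(\rho\nabla(-\Delta_N)^{-1}(\rho-\rho_M)\big)\Big)\,dx,
\]
and then show each of the three terms vanishes. For the transport term, incompressibility gives $u\cdot\nabla\rho = \divv(\rho u)$, so the divergence theorem yields $\int_\Omega u\cdot\nabla\rho\,dx = \int_{\p\Omega}\rho\,(u\cdot n)\,dS = 0$ using the no-flux condition $u\cdot n|_{\p\Omega}=0$. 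For the diffusion term, $\int_\Omega \Delta\rho\,dx = \int_{\p\Omega}\frac{\p\rho}{\p n}\,dS = 0$ by the Neumann condition on $\rho$. For the chemotactic term, $\int_\Omega \divv(\rho\nabla c)\,dx = \int_{\p\Omega}\rho\,\frac{\p c}{\p n}\,dS = 0$, since by construction the Neumann inverse Laplacian produces a potential $c$ with $\frac{\p c}{\p n}|_{\p\Omega}=0$. Summing, $\frac{d}{dt}\int_\Omega\rho\,dx = 0$, so $\int_\Omega \rho(\cdot,t)\,dx = \int_\Omega \rho_0\,dx$ for all $t\in[0,T]$, which gives the claimed identities.

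There is essentially no obstacle here beyond bookkeeping; the only point deserving a sentence is the justification of the interchange of $\frac{d}{dt}$ with $\int_\Omega$ and of the integrations by parts, both of which follow from the assumed regularity of $\rho$ on $[0,T]$ together with the boundary conditions built into \eqref{eq:KSIPM}. This is why the paper can dispatch it in one line.
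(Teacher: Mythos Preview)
Your proposal is correct and is exactly the approach the paper indicates: differentiate $\int_\Omega \rho\,dx$ in time, use $\rho\ge 0$ to identify this with $\|\rho\|_{L^1}$, and observe that each of the three divergence-form terms integrates to zero by the respective boundary conditions (no-flux for $u$, Neumann for $\rho$ and for $c$) together with incompressibility. You have simply written out in full what the paper compresses into one line.
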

In the rest of the paper, we will use Proposition \ref{lem:mass} without explicitly referencing it. Next, we introduce an elementary $L^2$ estimate for the density.

\begin{prop}
\label{lem:naive}
    Assume $\rho(t,x)$ is a regular, nonnegative solution to \eqref{eq:KSIPM} on $[0,T]$.
    Then for any $t \in [0,T]$, there exists a universal constant $C$ such that
    \begin{align}
    \label{est:naive1}
        \frac{d}{dt}\|\rho - \rho_M\|_{L^2}^2 + \|\nabla \rho\|_{L^2}^2 \le C\|\rho - \rho_M\|_{L^2}^4 + 2\rho_M \|\rho - \rho_M\|^2_{L^2}.
    \end{align}
\end{prop}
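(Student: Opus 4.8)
The plan is to derive the standard $L^2$ energy identity for the density by testing the first equation of \eqref{eq:KSIPM} against $\rho - \rho_M$. Since $u$ is divergence-free and satisfies $u\cdot n|_{\partial\Omega}=0$, the advection term $\int_\Omega (u\cdot\nabla\rho)(\rho-\rho_M)\,dx = \frac12\int_\Omega u\cdot\nabla(\rho-\rho_M)^2\,dx = 0$ after integrating by parts, so the buoyancy-driven flow makes no contribution to this particular estimate. The diffusion term yields $-\int_\Omega \Delta\rho\,(\rho-\rho_M)\,dx = \|\nabla\rho\|_{L^2}^2$ using the Neumann boundary condition, which accounts for the good term on the left-hand side. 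Thus the only term to control is the chemotactic one.

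For the chemotaxis term, integrate by parts to write $-\int_\Omega \divv(\rho\nabla(-\Delta_N)^{-1}(\rho-\rho_M))(\rho-\rho_M)\,dx = \int_\Omega \rho\,\nabla(-\Delta_N)^{-1}(\rho-\rho_M)\cdot\nabla(\rho-\rho_M)\,dx$ (no boundary terms thanks to the Neumann conditions). Splitting $\rho = (\rho-\rho_M)+\rho_M$, the $\rho_M$ piece becomes $\rho_M\int_\Omega \nabla(-\Delta_N)^{-1}(\rho-\rho_M)\cdot\nabla(\rho-\rho_M)\,dx = \rho_M\|\rho-\rho_M\|_{L^2}^2$ after integrating by parts once more and using $-\Delta_N(-\Delta_N)^{-1}(\rho-\rho_M) = \rho-\rho_M$; this explains the lower-order term $2\rho_M\|\rho-\rho_M\|_{L^2}^2$ (the factor $2$ coming from doubling the energy identity). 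For the remaining cubic-type term $\int_\Omega (\rho-\rho_M)\nabla(-\Delta_N)^{-1}(\rho-\rho_M)\cdot\nabla(\rho-\rho_M)\,dx$, I would bound it by $\|\rho-\rho_M\|_{L^4}\|\nabla(-\Delta_N)^{-1}(\rho-\rho_M)\|_{L^4}\|\nabla(\rho-\rho_M)\|_{L^2}$, then use the elliptic estimate $\|\nabla(-\Delta_N)^{-1}(\rho-\rho_M)\|_{L^4}\lesssim \|\rho-\rho_M\|_{L^{4/3}}\lesssim\|\rho-\rho_M\|_{L^2}$ (or an $L^2$-to-$W^{2,4/3}$ estimate followed by Sobolev embedding) together with the Gagliardo–Nirenberg inequality $\|\rho-\rho_M\|_{L^4}^2\lesssim \|\rho-\rho_M\|_{L^2}\|\nabla\rho\|_{L^2} + \|\rho-\rho_M\|_{L^2}^2$ in two dimensions. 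This produces a term of the form $\|\rho-\rho_M\|_{L^2}^{3/2}\|\nabla\rho\|_{L^2}^{3/2}$ (plus lower order), to which I apply Young's inequality $ab\le \varepsilon a^{4/3}+C_\varepsilon b^4$ with the exponents arranged so that the $\|\nabla\rho\|_{L^2}^2$ factor is absorbed into the good dissipation term on the left, leaving $C\|\rho-\rho_M\|_{L^2}^4$ plus terms dominated by $\|\rho-\rho_M\|_{L^2}^4 + \rho_M\|\rho-\rho_M\|_{L^2}^2$.

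The main technical point — though routine — is getting the powers to close: one wants the chemotaxis term controlled by $\tfrac12\|\nabla\rho\|_{L^2}^2 + C\|\rho-\rho_M\|_{L^2}^4$ with no stray factors, which forces a careful choice of interpolation exponents in the Gagliardo–Nirenberg and Young steps; any borderline Sobolev embedding in 2D must be handled with the correct logarithmic-free inequalities at the $L^4$ level rather than $L^\infty$. Since the argument is entirely standard (this is the classical Keller–Segel $L^2$ estimate, with the extra advection term vanishing identically), I do not anticipate any genuine obstacle; the inequality \eqref{est:naive1} then follows by absorbing half the dissipation and renaming constants.
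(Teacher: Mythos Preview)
Your power count in the Young step is off, and this is a genuine gap, not a cosmetic one. After your H\"older/elliptic/Ladyzhenskaya chain you arrive at a term of size $\|\rho-\rho_M\|_{L^2}^{3/2}\|\nabla\rho\|_{L^2}^{3/2}$. If you absorb the gradient via $ab \le \varepsilon a^{4/3} + C_\varepsilon b^4$ with $a = \|\nabla\rho\|_{L^2}^{3/2}$, the remaining piece is $b^4 = \|\rho-\rho_M\|_{L^2}^{6}$, not $\|\rho-\rho_M\|_{L^2}^{4}$. No rearrangement of the H\"older exponents on $\int (\rho-\rho_M)\,\nabla c\cdot\nabla\rho$ closes at the fourth power: the factor $\|\nabla c\|_{L^q}$ can only be controlled by $\|\rho-\rho_M\|_{L^2}$ (at best), so you always end up with one factor of $\|\rho-\rho_M\|_{L^2}$ too many before Young. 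And the exponent matters downstream: Corollary~\ref{timecor} uses \eqref{est:naive1} to get a doubling-time lower bound $t_{k+1}-t_k \gtrsim 2^{-N}$, which is exactly what makes the potential-energy drop on good intervals in Proposition~\ref{lem:good} of order $N^{-1}$ and hence non-summable. A sixth power would yield only $t_{k+1}-t_k \gtrsim 2^{-2N}$, and the whole mechanism collapses.

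The fix is to use the structure of the chemotaxis term rather than estimating it directly: write $(\rho-\rho_M)\nabla\rho = \tfrac12\nabla(\rho-\rho_M)^2$ and integrate by parts \emph{once more} against $\nabla c$ (the Neumann condition on $c$ kills the boundary term) to obtain
\[
\int_\Omega (\rho-\rho_M)\,\nabla c\cdot\nabla\rho\,dx \;=\; \tfrac12\int_\Omega (\rho-\rho_M)^3\,dx.
\]
Now the two-dimensional Gagliardo--Nirenberg inequality $\|f\|_{L^3}^3 \lesssim \|f\|_{L^2}^{2}\|\nabla f\|_{L^2}$ gives a product $\|\rho-\rho_M\|_{L^2}^{2}\|\nabla\rho\|_{L^2}$, and Young with conjugate exponents $2,2$ yields exactly $\tfrac12\|\nabla\rho\|_{L^2}^2 + C\|\rho-\rho_M\|_{L^2}^4$. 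This is the route the paper takes; everything else in your outline (vanishing of the advection term, the $\rho_M\|\rho-\rho_M\|_{L^2}^2$ contribution) is correct.
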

    \begin{proof}
        We remark that energy estimates of this sort are well known in the setting of $\Omega = \T^2$ or $\R^2$ (\cite{KX,Pert}); the argument is very similar in our case
        but we provide a proof for the sake of completeness. Testing the $\rho$ equation of \eqref{eq:KSIPM} on both sides by $\rho - \rho_M$, one obtains the following by applying the divergence theorem:
\begin{align}
\label{eq:L2energy}
    \frac{1}{2}\frac{d}{dt}\|\rho-\rho_M\|_{L^2}^2 + \|\nabla\rho\|_{L^2}^2 &= -\int_\Omega (\rho- \rho_M)\divv(\rho \nabla(-\Delta_N)^{-1}(\rho-\rho_M))dx \notag\\
    &= \frac{1}{2}\int_\Omega \nabla( \rho^2) \cdot\nabla (-\Delta_N)^{-1}(\rho - \rho_M)dx \notag\\
    &= \frac{1}{2}\int_\Omega \rho^2(\rho - \rho_M) dx \notag\\
    &= \frac{1}{2}\int_\Omega (\rho - \rho_M)^3 dx + \rho_M\int_\Omega (\rho - \rho_M)^2 dx
\end{align}
By a Gagliardo-Nirenberg-Sobolev inequality in two dimensions, for any $f\in \dot H^1(\Omega)$ with mean zero we have
$$
\|f\|_{L^3}\le C\|f\|_{L^2}^{2/3} \|\nabla f\|_{L^2}^{1/3}.
$$
Applying this to $f := \rho-\rho_M$,  \eqref{eq:L2energy} becomes
\begin{align*}
    \frac{1}{2}\frac{d}{dt}\|\rho-\rho_M\|_{L^2}^2 + \|\nabla\rho\|_{L^2}^2 &\le C\|\rho - \rho_M\|_{L^2}^2\|\nabla\rho\|_{L^2} + \rho_M \|\rho - \rho_M\|_{L^2}^2\\
    &\le \frac{1}{2}\|\nabla\rho\|_{L^2}^2 + C\|\rho - \rho_M\|_{L^2}^4 + \rho_M \|\rho - \rho_M\|_{L^2}^2.
\end{align*}
Absorbing the gradient term on right side of the above by left side we obtain
$$
\frac{d}{dt}\|\rho-\rho_M\|_{L^2}^2 + \|\nabla\rho\|_{L^2}^2 \le C\|\rho - \rho_M\|_{L^2}^4 + 2\rho_M \|\rho - \rho_M\|_{L^2}^2,
$$
and we conclude the proof.
 \end{proof}

The following corollary follows immediately:
\begin{cor}\label{timecor}
If $\|\rho(\cdot, t) - \rho_M\|_{L^2}^2 \geq 2^{N_0}$ with sufficiently large $N_0$ that  only depends on $\rho_0$ (specifically, on $\rho_M$), then
{
\begin{equation} \label{est:naive}
 \frac{d}{dt}\|\rho(\cdot, t) - \rho_M\|_{L^2}^2 + \|\nabla \rho(\cdot, t)\|_{L^2}^2 \le C\|\rho(\cdot, t) - \rho_M\|_{L^2}^4.
\end{equation}
}
Moreover, if for some $N \geq N_0$ we have $\|\rho(\cdot, s) - \rho_M\|_{L^2}^2 = 2^N$ and $\|\rho(\cdot, r) - \rho_M\|_{L^2}^2 = 2^{N+1}$
for some $r>s,$ then $r-s \geq c_0 2^{-N},$ where $c_0$ is a universal constant.
\end{cor}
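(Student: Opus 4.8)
The plan is to directly feed the hypothesis $\|\rho(\cdot,t)-\rho_M\|_{L^2}^2 \ge 2^{N_0}$ into the inequality \eqref{est:naive1} of Proposition \ref{lem:naive} and choose $N_0$ large enough that the lower-order term $2\rho_M\|\rho-\rho_M\|_{L^2}^2$ gets absorbed into the quartic term $C\|\rho-\rho_M\|_{L^2}^4$. Concretely, writing $y := \|\rho-\rho_M\|_{L^2}^2$, the assumption $y \ge 2^{N_0}$ gives $2\rho_M\, y \le 2\rho_M\, y^2 / 2^{N_0} \le C y^2$ once $2^{N_0} \ge 2\rho_M$, i.e. once $N_0$ is chosen (depending only on $\rho_M$, hence only on $\rho_0$) so that $2^{N_0-1} \ge \rho_M$. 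Since $\|\nabla\rho\|_{L^2}^2 \ge 0$, absorbing this term into the right side of \eqref{est:naive1} and adjusting $C$ yields exactly \eqref{est:naive}:
\[
\frac{d}{dt}\|\rho-\rho_M\|_{L^2}^2 + \|\nabla\rho\|_{L^2}^2 \le C\|\rho-\rho_M\|_{L^2}^4.
\]

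For the second claim, I would run a simple ODE comparison on the interval $[s,r]$. By Corollary \ref{cor:L2criterion} the solution is regular on $[s,r]$ (it does not blow up by time $r$), and by continuity of $t \mapsto \|\rho(\cdot,t)-\rho_M\|_{L^2}^2$ together with $\|\rho(\cdot,s)-\rho_M\|_{L^2}^2 = 2^N \ge 2^{N_0}$ and $\|\rho(\cdot,r)-\rho_M\|_{L^2}^2 = 2^{N+1}$, one has $y(t) := \|\rho(\cdot,t)-\rho_M\|_{L^2}^2 \ge 2^N \ge 2^{N_0}$ for all $t$ in a subinterval ending at $r$; in fact, one may take $[s',r] \subseteq [s,r]$ with $y(s') = 2^N$, $y(r)=2^{N+1}$, and $y(t) \in [2^N, 2^{N+1}]$ throughout — it suffices to let $s'$ be the last time before $r$ at which $y = 2^N$. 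On $[s',r]$ the bound \eqref{est:naive} applies and gives $y'(t) \le C y(t)^2 \le C (2^{N+1})^2 = 4C\, 4^N$. Integrating from $s'$ to $r$,
\[
2^N = y(r) - y(s') \le 4C\, 4^N (r - s'),
\]
so $r - s \ge r - s' \ge \frac{1}{4C}\, 2^{-N} =: c_0\, 2^{-N}$ with $c_0 = 1/(4C)$ a universal constant.

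I do not anticipate a genuine obstacle here; the only point requiring a little care is the bookkeeping in the second part — one must pass to the sub-interval $[s',r]$ on which $y$ stays trapped in $[2^N, 2^{N+1}]$ (rather than using $[s,r]$ directly, where $y$ could a priori dip below $2^N$), so that the differential inequality \eqref{est:naive} is valid and the crude bound $y'(t) \le C\,4^N$ holds with the constant evaluated at the upper endpoint $2^{N+1}$. Everything else is an elementary absorption argument and a one-line integration of an ODE inequality.
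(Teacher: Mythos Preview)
Your approach is essentially what the paper intends (the paper gives no proof and says the corollary ``follows immediately''), and the first part is fine. There is, however, a small slip in the second part: choosing $s'$ to be the last time before $r$ at which $y=2^N$ guarantees $y(t)\ge 2^N$ on $[s',r]$, but it does \emph{not} guarantee $y(t)\le 2^{N+1}$ there --- nothing prevents $y$ from overshooting $2^{N+1}$ and coming back down to $2^{N+1}$ at time $r$. Thus the step $y'(t)\le Cy(t)^2 \le C(2^{N+1})^2$ is not justified as written.

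The fix is immediate and does not change your approach. Either (i) additionally replace $r$ by $r'$, the first time in $[s',r]$ at which $y=2^{N+1}$, so that $y\in[2^N,2^{N+1}]$ on $[s',r']$ and your integration gives $r-s\ge r'-s'\ge c_0 2^{-N}$; or (ii) skip the upper bound entirely and integrate $y'\le Cy^2$ directly: since $y\ge 2^{N_0}>0$ on $[s',r]$, one has $-\frac{d}{dt}(1/y)\le C$, hence
\[
\frac{1}{y(s')}-\frac{1}{y(r)} \le C(r-s'), \qquad\text{i.e.}\qquad 2^{-N}-2^{-(N+1)} \le C(r-s'),
\]
giving $r-s\ge r-s'\ge (2C)^{-1}2^{-N}$.
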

\begin{proof}
The estimate \eqref{est:naive} is immediate from the assumption and \eqref{est:naive1}.
To obtain the second statement, let
\[ \tau_2 = {\rm inf} \{t>s: \,\,\|\rho(\cdot, t) - \rho_M\|_{L^2}^2 = 2^{N+1}\}, \]
\[ \tau_1 = {\rm sup} \{t<\tau_2: \,\,\|\rho(\cdot, t) - \rho_M\|_{L^2}^2 = 2^{N}\}. \]
For $t \in [\tau_1,\tau_2]$ we clearly have $2^N \leq \|\rho(\cdot, t) - \rho_M\|_{L^2}^2 \leq 2^{N+1}$ and so \eqref{est:naive} holds given $N > N_0$, where $N_0$ is sufficiently large. Then $r-s \geq \tau_2-\tau_1 \geq c_0 2^{-N},$ {which follows from solving the differential inequality $X' \lesssim X^2$.}
\end{proof}

Next, we give a variant of \eqref{est:naive}, which takes advantage of the orthogonal decomposition $\rho = \brho + \trho$.
\begin{prop}\label{propen}
    Assume $\rho(x,t)$ is a regular, nonnegative solution to \eqref{eq:KSIPM} on $[0,T]$.
    Then for any $t \in [0,T]$, there exist universal constants $C_2$ and $C_3$ such that
    \begin{align}
    \label{est:mainenergy1}
    \frac{d}{dt}\|\rho - \rho_M\|_{L^2}^2 &\le -\frac{C_2}{\rho_M^4}\|\brho - \rho_M\|_{L^2}^6+ C_3 \left( \|\brho - \rho_M\|_{L^2}^{10/3} + \rho_M \|\brho - \rho_M\|_{L^2}^{2}+ \rho_M\|\trho\|_{L^2}^2 + \|\trho\|_{L^2}^4\right).
    \end{align}
\end{prop}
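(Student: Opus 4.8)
The plan is to test the $\rho$-equation against $\rho-\rho_M$ as in Proposition~\ref{lem:naive}, but this time to exploit the decomposition $\rho = \brho + \trho$ and, crucially, to keep the one-dimensional structure alive. Starting from the identity \eqref{eq:L2energy}, the right-hand side is $\frac12\int_\Omega (\rho-\rho_M)^3\,dx + \rho_M\int_\Omega(\rho-\rho_M)^2\,dx$, and we drop the good term $-\|\nabla\rho\|_{L^2}^2 \le 0$. Write $\rho - \rho_M = (\brho-\rho_M) + \trho$ and expand the cube. The terms linear in $\trho$ vanish after integration in $x_1$ whenever the other factors depend only on $x_2$; concretely, $\int_\Omega (\brho-\rho_M)^2\trho\,dx = 0$. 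So the cubic contribution reduces to $\frac12\int_\Omega (\brho-\rho_M)^3\,dx + \frac32\int_\Omega(\brho-\rho_M)\trho^2\,dx + \frac12\int_\Omega \trho^3\,dx$.

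Next I would treat each surviving piece. The key term $\frac12\int_\Omega(\brho-\rho_M)^3\,dx$ is the only one that can be positive and large, and it must be controlled purely one-dimensionally: since $\brho-\rho_M$ has mean zero on $[0,\pi]$ in the $x_2$ variable, and cannot be bounded by $\|\nabla\rho\|_{L^2}$ here because we have already discarded that term, the mechanism is different — one uses that $\|\brho-\rho_M\|_{L^\infty([0,\pi])}$ is controlled by $L^1$-type quantities via mass conservation, so that $\|\brho-\rho_M\|_{L^1([0,\pi])} \le \|\brho\|_{L^1} + \pi\rho_M \lesssim \rho_M$, giving a bound of the form $\int_\Omega(\brho-\rho_M)^3\,dx \le \|\brho-\rho_M\|_{L^\infty}\|\brho-\rho_M\|_{L^2}^2$ and then interpolating $\|\brho-\rho_M\|_{L^\infty}$ against $\|\brho-\rho_M\|_{L^2}$ using the mean-zero property. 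This is precisely where the negative sixth-power term on the right of \eqref{est:mainenergy1} should come from: one wants to produce a \emph{negative} multiple of $\|\brho-\rho_M\|_{L^2}^6$, so I expect the argument to first bound the cubic term above by something like $C\rho_M^{1/2}\|\brho-\rho_M\|_{L^2}^{5/2}$ (a 1D Gagliardo–Nirenberg estimate using $\|(\brho-\rho_M)'\|$ is not available, so instead one uses the elliptic relation $\brho-\rho_M = -\partial_{x_2}^2(-\Delta_N)^{-1}(\brho-\rho_M)$ restricted to the zeroth $x_1$-mode, or a direct Agmon-type inequality on $[0,\pi]$), and then compare powers: whatever positive power $\alpha < 6$ appears, Young's inequality $a b \le \epsilon a^{6/\alpha\cdot\beta} + C_\epsilon(\dots)$ lets one absorb it into $-\frac{C_2}{\rho_M^4}\|\brho-\rho_M\|_{L^2}^6$ at the cost of a lower-order term in $\rho_M$ and $\|\brho-\rho_M\|_{L^2}$, which gets parked in $C_3(\|\brho-\rho_M\|_{L^2}^{10/3} + \rho_M\|\brho-\rho_M\|_{L^2}^2)$. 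The $\rho_M$-weights ($\rho_M^{-4}$ in front, $\rho_M$ in the error) are exactly the bookkeeping one gets from this Young step.

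For the remaining terms: $\frac32\int_\Omega(\brho-\rho_M)\trho^2\,dx \le \frac32\|\brho-\rho_M\|_{L^\infty}\|\trho\|_{L^2}^2 \lesssim \rho_M\|\trho\|_{L^2}^2$ using the $L^\infty$ bound on $\brho-\rho_M$ in terms of $\rho_M$; the cubic $\frac12\int_\Omega\trho^3\,dx$ is bounded by $\frac12\|\trho\|_{L^3}^3 \le C\|\trho\|_{L^2}^2\|\nabla\trho\|_{L^2} \le \frac14\|\nabla\trho\|_{L^2}^2 + C\|\trho\|_{L^2}^4$ — but wait, we dropped $\|\nabla\rho\|_{L^2}^2$; in fact one should \emph{not} drop it entirely but keep $\|\nabla\trho\|_{L^2}^2$ (which is part of $\|\nabla\rho\|_{L^2}^2$ by orthogonality) precisely to absorb this term, leaving the clean error $C\|\trho\|_{L^2}^4$; alternatively, since $\trho$ has zero $x_1$-average one has a Poincaré-type gain $\|\trho\|_{L^2}\lesssim\|\partial_{x_1}\trho\|_{L^2}\le\|\nabla\trho\|_{L^2}$ so the same absorption works. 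Finally the lower-order term $\rho_M\int_\Omega(\rho-\rho_M)^2\,dx = \rho_M\|\brho-\rho_M\|_{L^2}^2 + \rho_M\|\trho\|_{L^2}^2$ splits directly into the two corresponding error terms in \eqref{est:mainenergy1}. Collecting everything yields the claimed inequality.

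\textbf{Main obstacle.} The delicate point is producing the \emph{negative} $\|\brho-\rho_M\|_{L^2}^6$ term with the correct power. Since we have given up the dissipation $\|\nabla\rho\|_{L^2}^2$, the only leverage on $\int(\brho-\rho_M)^3$ is the a priori $L^1$ (hence, via mass, $\rho_M$-level) bound on $\brho$ together with the zero-mean structure; turning this into a genuinely dissipative-looking $-\|\brho-\rho_M\|_{L^2}^6$ after a Young's inequality — rather than merely an estimate with a positive power — requires choosing the interpolation exponents so that the leftover is strictly subcritical ($10/3 < 6$), and keeping track of the $\rho_M$ powers so the final constants $C_2/\rho_M^4$ and $C_3$ come out as stated. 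Everything else is routine expansion, orthogonality, and absorption into the (retained part of the) diffusion.
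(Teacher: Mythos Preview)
There is a genuine gap: the negative term $-\frac{C_2}{\rho_M^4}\|\brho-\rho_M\|_{L^2}^6$ does not come from any manipulation of the cubic nonlinearity, and no amount of Young's inequality applied to positive quantities will ever manufacture a negative term. In the paper's proof the mechanism is the opposite of what you describe: one \emph{retains} the full dissipation $\|\nabla\rho\|_{L^2}^2 = \|\p_{x_2}\brho\|_{L^2}^2 + \|\nabla\trho\|_{L^2}^2$ on the left, uses half of $\|\p_{x_2}\brho\|_{L^2}^2$ (via 1D Gagliardo--Nirenberg and Young) to absorb $\int|\brho-\rho_M|^3 \lesssim \|\brho-\rho_M\|_{L^2}^{5/2}\|\p_{x_2}\brho\|_{L^2}^{1/2}$, leaving the residual $C\|\brho-\rho_M\|_{L^2}^{10/3}$, and then converts the \emph{remaining} $-\|\p_{x_2}\brho\|_{L^2}^2$ into the negative sixth-power term via the one-dimensional Nash inequality $\|\p_{x_2}\brho\|_{L^2}^2 \gtrsim \|\brho-\rho_M\|_{L^1}^{-4}\|\brho-\rho_M\|_{L^2}^6$ together with $\|\brho-\rho_M\|_{L^1}\lesssim\rho_M$. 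Your decision to ``drop the good term $-\|\nabla\rho\|_{L^2}^2$'' at the outset (and later to restore only $\|\nabla\trho\|_{L^2}^2$) throws away precisely the piece that generates the negative term; your statement that ``a 1D Gagliardo--Nirenberg estimate using $\|(\brho-\rho_M)'\|$ is not available'' is then a self-inflicted obstruction.

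A second error: you invoke an $L^\infty$ bound $\|\brho-\rho_M\|_{L^\infty}\lesssim\rho_M$ to control the cross term $\int(\brho-\rho_M)\trho^2$. No such bound holds---mass conservation gives only $\|\brho-\rho_M\|_{L^1}\lesssim\rho_M$, and $\brho$ can be arbitrarily large pointwise. The paper avoids this issue by not expanding the cube exactly: it uses $(a+b)^3 \le C(|a|^3+|b|^3)$, so the cross term never appears and one only needs to bound $\int|\brho-\rho_M|^3$ and $\int|\trho|^3$ separately, each against its own piece of the retained dissipation.
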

    \begin{proof}
We test \eqref{eq:KSIPM} on both sides by $\rho - \rho_M$, which yields
\begin{align*}
    \frac{1}{2}\frac{d}{dt}\|\rho-\rho_M\|_{L^2}^2 +  (\|\p_{x_2} \brho\|_{L^2}^2 + \|\nabla \trho\|_{L^2}^2) &= \frac{1}{2}\int (\brho - \rho_M + \trho)^3 + \rho_M\int (\brho - \rho_M + \trho)^2: = I+J.
\end{align*}
First, one observes that $J = \rho_M(\|\brho - \rho_M\|_{L^2}^2 + \|\trho\|_{L^2}^2)$ using orthogonality.
To estimate $I$, using the elementary inequality $(a+b)^3 \leq C(|a|^3 + |b|^3)$, we have
\[
I \leq C\left( \int_{\Omega}  |\brho - \rho_M|^3  dx +\int_{\Omega}  |\trho|^3 dx\right) =: C(I_1 + I_2).
\]

Before proceeding, recall the following Gagliardo-Nirenberg-Sobolev inequality for $d = 1,2$ for mean-zero $f$:
$$
\|f\|_{L^3}\le C(d)\|f\|_{L^2}^{1-\frac{d}{6}} \|\nabla f\|_{L^2}^{\frac{d}{6}},
$$
where $C(d)$ is some dimensional constant. Applying the above inequality to $I_1$ (with $f=\brho-\rho_M$ and $d=1$) and $I_2$ (with $f=\tilde\rho$ and $d=2$) respectively, we have the following for any $\epsilon>0$:
\begin{align*}
     {I_1} & \lesssim \|\brho - \rho_M\|_{L^2([0,\pi])}^{5/2}\|\p_{x_2}\brho\|_{L^2([0,\pi])}^{1/2} \le \epsilon \|\p_{x_2}\brho\|_{L^2}^2 + C(\epsilon)\|\brho - \rho_M\|_{L^2}^{10/3};\\[0.1cm]
    I_2 & \lesssim \|\trho\|_{L^2}^2 \|\nabla \trho\|_{L^2} \le \epsilon \|\nabla \trho\|_{L^2}^2 + C(\epsilon)\|\trho\|_{L^2}^4.
\end{align*}
Choosing $\epsilon$ small enough and combining the estimates of $I, J$, we obtain
\begin{align}\label{aux13a}
    \frac{d}{dt}\|\rho - \rho_M\|_{L^2}^2 + \|\p_{x_2} \brho\|_{L^2}^2 + \|\nabla \trho\|_{L^2}^2 & \le C_3 \left( \|\brho - \rho_M\|_{L^2}^{10/3} + \rho_M\|\brho - \rho_M\|_{L^2}^2+
    \rho_M\|\trho\|_{L^2}^2 + \|\trho\|_{L^2}^4 \right).
\end{align}
By the one-dimensional Nash's inequality, we may bound the diffusion term from below by
\begin{align*}
    \|\p_{x_2}\brho\|_{L^2}^2 = 2\pi \|\p_{x_2}\brho\|_{L^2([0,\pi])}^2 \gtrsim \|\brho - \rho_M\|_{L^1}^{-4}\|\brho - \rho_M\|_{L^2}^6.
\end{align*}
Moreover, we have the elementary bound $\|\brho - \rho_M\|_{L^1} \lesssim \rho_M$. Combining with the energy inequality \eqref{aux13a}, we finally obtain \eqref{est:mainenergy1}.
\end{proof}

The following corollary simplifies the form of the differential inequality in large $L^2$ norm regime.
\begin{cor}\label{coren}
In addition to assumptions of Proposition \ref{propen}, suppose that $\|\rho(\cdot,t) - \rho_M\|_{L^2}^2 \ge 2^{N_0}$ for $t \in [s,r]$ and sufficiently large $N_0$ that may only depend on $\rho_M$.
Then for any $t \in [s,r]$ we have
 \begin{align}
    \label{est:mainenergy}
    \frac{d}{dt}\|\rho - \rho_M\|_{L^2}^2 &\le -\frac{C_2}{\rho_M^4}\|\brho - \rho_M\|_{L^2}^6+ C_3 \|\trho\|_{L^2}^4
    \end{align}
    for some universal constants $C_2$ and $C_3$.
\end{cor}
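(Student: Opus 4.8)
The plan is to deduce \eqref{est:mainenergy} directly from the energy inequality \eqref{est:mainenergy1} of Proposition~\ref{propen} by absorbing the three ``lower order'' terms $\|\brho-\rho_M\|_{L^2}^{10/3}$, $\rho_M\|\brho-\rho_M\|_{L^2}^2$ and $\rho_M\|\trho\|_{L^2}^2$ either into the strongly dissipative term $-\frac{C_2}{\rho_M^4}\|\brho-\rho_M\|_{L^2}^6$ or into $C_3\|\trho\|_{L^2}^4$, at the price of a fixed additive constant depending only on $\rho_M$, which is then eliminated using the standing hypothesis $\|\rho-\rho_M\|_{L^2}^2\ge 2^{N_0}$. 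The only tools needed are Young's inequality and the Pythagorean identity $\|\rho-\rho_M\|_{L^2}^2=\|\brho-\rho_M\|_{L^2}^2+\|\trho\|_{L^2}^2$.

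Writing $A:=\|\brho-\rho_M\|_{L^2}^2$ and $B:=\|\trho\|_{L^2}^2$, I would fix a small parameter $\delta>0$ (to be chosen) and apply Young's inequality to get $A^{5/3}\le \delta A^3 + C\delta^{-5/4}$, $\rho_M A\le \delta A^3 + C\rho_M^{3/2}\delta^{-1/2}$, and $\rho_M B\le \delta B^2 + C\rho_M^2\delta^{-1}$. Substituting these bounds into \eqref{est:mainenergy1} gives
\begin{align*}
\frac{d}{dt}\|\rho-\rho_M\|_{L^2}^2 \le \left(-\frac{C_2}{\rho_M^4}+2C_3\delta\right)A^3 + C_3(1+\delta)B^2 + C_3 K_0(\delta,\rho_M),
\end{align*}
where $K_0(\delta,\rho_M)=C(\delta^{-5/4}+\rho_M^{3/2}\delta^{-1/2}+\rho_M^2\delta^{-1})$. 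Choosing $\delta=\frac{C_2}{4C_3\rho_M^4}$ makes the coefficient of $A^3$ at most $-\frac{C_2}{2\rho_M^4}$, so the right-hand side collapses to $-\frac{C_2}{2\rho_M^4}A^3 + C_3' B^2 + K(\rho_M)$ for a new universal constant $C_3'$ and a constant $K(\rho_M)$ depending only on $\rho_M$ (and on the universal constants $C_2,C_3$).

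It remains to dispose of $K(\rho_M)$. Since $A+B=\|\rho-\rho_M\|_{L^2}^2\ge 2^{N_0}$ at the time under consideration, at least one of $A\ge 2^{N_0-1}$ or $B\ge 2^{N_0-1}$ holds. In the first case $\frac{C_2}{4\rho_M^4}A^3\ge \frac{C_2}{4\rho_M^4}2^{3(N_0-1)}\ge K(\rho_M)$ once $N_0$ is large enough depending only on $\rho_M$, so $K(\rho_M)$ is absorbed into half of the leftover $A^3$ term; in the second case $B^2\ge 2^{2(N_0-1)}\ge K(\rho_M)$ for $N_0$ large, hence $C_3'B^2+K(\rho_M)\le (C_3'+1)B^2$. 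Either way,
\begin{align*}
\frac{d}{dt}\|\rho-\rho_M\|_{L^2}^2 \le -\frac{C_2}{4\rho_M^4}\|\brho-\rho_M\|_{L^2}^6 + (C_3'+1)\|\trho\|_{L^2}^4,
\end{align*}
which is \eqref{est:mainenergy} after relabeling the universal constants $C_2/4\mapsto C_2$ and $C_3'+1\mapsto C_3$; applying this at every $t\in[s,r]$ finishes the proof.

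There is no serious obstacle here, the argument being elementary. The one point requiring a bit of care is the term $\rho_M\|\trho\|_{L^2}^2$: unlike the two terms built from $\brho-\rho_M$, it cannot be absorbed into $-\|\brho-\rho_M\|_{L^2}^6$ when $\brho-\rho_M$ is small, so one must genuinely retain $C_3\|\trho\|_{L^2}^4$ as a sink and, correspondingly, invoke the full-norm hypothesis $\|\rho-\rho_M\|_{L^2}^2\ge 2^{N_0}$ — rather than largeness of $\|\brho-\rho_M\|_{L^2}$ alone — to kill the residual constant, which is why the case split is necessary.
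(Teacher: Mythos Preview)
Your argument is correct in structure and is essentially the paper's approach: both proofs use the Pythagorean splitting $\|\rho-\rho_M\|_{L^2}^2=A+B$ and a dichotomy on which of $A,B$ carries at least half the mass to absorb the intermediate terms. The paper simply performs the case split directly on \eqref{est:mainenergy1}, while you first reduce via Young's inequality to a residual constant $K(\rho_M)$ and then do the same split; the content is the same.

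One small slip to fix: you apply a single parameter $\delta=\tfrac{C_2}{4C_3\rho_M^4}$ to all three Young inequalities, so the coefficient of $B^2$ becomes $C_3(1+\delta)=C_3+\tfrac{C_2}{4\rho_M^4}$, which is \emph{not} universal (it blows up as $\rho_M\to 0$), contradicting the statement that $C_3$ is universal. The remedy is trivial: decouple the third estimate and use, say, $\rho_M B\le B^2+\tfrac14\rho_M^2$; then the $B^2$ coefficient is $2C_3$ and your $K(\rho_M)$ picks up an extra $\tfrac14\rho_M^2$, after which the rest of your argument goes through verbatim with genuinely universal constants.
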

\begin{proof}
Recall that due to orthogonality,
\[ \|\rho - \rho_M\|_{L^2}^2 = \|\bar \rho -\rho_M\|_{L^2}^2 + \|\tilde{\rho}\|_{L^2}^2. \]
If $\|\bar \rho - \rho_M\|_{L^2}^2 \geq \frac12 \|\rho - \rho_M\|_{L^2}^2,$ then, provided $N_0$ is sufficiently large, the first term
on the right side of \eqref{est:mainenergy1} dominates all other terms. If $\|\bar \rho - \rho_M\|_{L^2}^2 \leq \frac12 \|\rho - \rho_M\|_{L^2}^2,$
then $\|\tilde{\rho}\|_{L^2} \geq \frac12 \|\rho - \rho_M\|_{L^2}^2.$ In this case, if $N_0$ is sufficiently large, the last term on the right side
of \eqref{est:mainenergy1} dominates all other terms with possible exception of the first one.
\end{proof}

At the end of this section, we introduce a potential energy of the system \eqref{eq:KSIPM}, which will play a crucial role later. Define
\begin{equation}\label{def_potential}
E(t) := \int_\Omega \rho(x,t) x_2 dx.
\end{equation}
Note that if $\rho(x,t)$ is a nonnegative solution of \eqref{eq:KSIPM} that blows up at $T_* < \infty$, then $E(t) \ge 0$ for $t \in [0,T_*)$,
and
\begin{equation}\label{aux13e}
\sup_{t \in [0,T_*)}E(t) \le \pi\|\rho(\cdot,t)\|_{L^1} = \pi\|\rho_0\|_{L^1}
\end{equation}
due to conservation of mass. In addition we need to control the rate of change of the potential energy, and it is here that the IPM part of \eqref{eq:KSIPM} is genuinely exploited:

\begin{prop}\label{potender}
For any $t \in (0,T_*)$, we have
\begin{align}
\label{eq:dtE}
E'(t) = \underbrace{-g\|\p_{x_1} \rho\|_{\dot H^{-1}_0}^2}_{\text{Main term}} - \underbrace{\int_\Omega \p_{x_2} \rho dx}_{\text{Diffusion}} + \underbrace{\int_\Omega \rho\p_{x_2}(-\Delta_N)^{-1}(\rho - \rho_M)}_{\text{Keller-Segel nonlinearity}}.
\end{align}
\end{prop}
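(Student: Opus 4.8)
The plan is to compute $E'(t)$ directly by differentiating under the integral sign and substituting the evolution equation for $\rho$ from \eqref{eq:KSIPM}. Writing $\phi(x_2) := \pi - x_2$, we have $E'(t) = \int_\Omega \phi\, \p_t \rho\, dx = \int_\Omega \phi\left(-u\cdot\nabla\rho + \Delta\rho - \divv(\rho\nabla(-\Delta_N)^{-1}(\rho-\rho_M))\right)dx$, and the three resulting terms should match, respectively, the main term, the diffusion term, and the Keller--Segel nonlinearity. Since $\phi$ depends only on $x_2$ with $\p_{x_2}\phi = -1$ and $\p_{x_2}^2\phi = 0$, integration by parts collapses several pieces: the diffusion term $\int_\Omega \phi\,\Delta\rho\,dx = \int_\Omega \rho\,\Delta\phi\,dx + (\text{boundary})$ — here one must keep track of the boundary terms on $\p\Omega = \T\times\{0,\pi\}$, using the Neumann condition $\p_n\rho = 0$, and I expect the bulk term to vanish ($\Delta\phi = 0$) leaving exactly $\int_\Omega \p_{x_2}\rho\,dx$ after reorganizing (indeed $\int_\Omega \phi \Delta \rho\,dx = -\int_\Omega \nabla\phi\cdot\nabla\rho\,dx = \int_\Omega \p_{x_2}\rho\,dx$, with no boundary term from the first integration by parts since $\p_n \rho = 0$). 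Similarly, $-\int_\Omega \phi\,\divv(\rho\nabla(-\Delta_N)^{-1}(\rho-\rho_M))\,dx = \int_\Omega \nabla\phi\cdot\rho\nabla(-\Delta_N)^{-1}(\rho-\rho_M)\,dx = -\int_\Omega \rho\,\p_{x_2}(-\Delta_N)^{-1}(\rho-\rho_M)\,dx$, again with the boundary term killed by $u\cdot n = 0$ for the chemotactic drift's normal component — more precisely one checks $\rho\nabla(-\Delta_N)^{-1}(\rho-\rho_M)\cdot n = 0$ on $\p\Omega$ because $\p_{x_2}(-\Delta_N)^{-1}(\rho - \rho_M)$ vanishes there in the Neumann setting, or argues the boundary integrand is zero directly.

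The genuinely substantive step is the advection term $-\int_\Omega \phi\, u\cdot\nabla\rho\,dx$, which must be shown to equal $-g\|\p_{x_1}\rho\|_{\dot H_0^{-1}}^2$. Using incompressibility, $-\int_\Omega \phi\, u\cdot\nabla\rho\,dx = -\int_\Omega \phi\,\divv(u\rho)\,dx = \int_\Omega \rho\, u\cdot\nabla\phi\,dx + (\text{boundary}) = -\int_\Omega \rho\, u_2\,dx$, where $u_2$ is the second component of $u$ and the boundary term vanishes since $u\cdot n = 0$. Now I invoke the Biot--Savart-type formula \eqref{uBS}: $u = g\nabla^\perp(-\Delta_D)^{-1}\p_{x_1}\rho$, so $u_2 = g\,\p_{x_1}(-\Delta_D)^{-1}\p_{x_1}\rho$. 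Therefore $-\int_\Omega \rho\,u_2\,dx = -g\int_\Omega \rho\,\p_{x_1}(-\Delta_D)^{-1}\p_{x_1}\rho\,dx = g\int_\Omega (\p_{x_1}\rho)(-\Delta_D)^{-1}\p_{x_1}\rho\,dx$ after integrating by parts in $x_1$ (no boundary term since $\T$ is periodic), and this last expression is precisely $-g\|\p_{x_1}\rho\|_{\dot H_0^{-1}}^2$ — wait, sign check: $\int_\Omega f(-\Delta_D)^{-1}f\,dx = \|f\|_{\dot H_0^{-1}}^2 \geq 0$, so we get $-g\int_\Omega \rho\,u_2 = +g\|\p_{x_1}\rho\|^2_{\dot H_0^{-1}}$... this needs care, so let me instead track it as $-\int_\Omega\rho u_2\,dx = -g\int_\Omega \rho\,\p_{x_1}(-\Delta_D)^{-1}\p_{x_1}\rho\,dx$; writing $\p_{x_1}(-\Delta_D)^{-1}\p_{x_1} = -\p_{x_1}(-\Delta_D)^{-1}\p_{x_1}$ as a negative semidefinite operator composed appropriately, one obtains $= -g\|\p_{x_1}\rho\|_{\dot H_0^{-1}}^2$ once the signs from the two integrations by parts in $x_1$ and the negative semidefiniteness of $-\p_{x_1}(-\Delta_D)^{-1}\p_{x_1}$ combine correctly; I would verify this cleanly on the Fourier side using the Dirichlet eigenfunction expansion, where $\widehat{u_2}_D(k_1,k_2) = g\,\frac{-k_1^2}{k_1^2+k_2^2}\,\hat\rho_D(k_1,k_2)$, so $-\int_\Omega\rho u_2 = -g\sum \frac{-k_1^2}{k_1^2+k_2^2}|\hat\rho_D|^2 \cdot(\text{normalization}) = -g\sum \frac{k_1^2}{k_1^2+k_2^2}|\hat\rho_D|^2$, and since $\widehat{\p_{x_1}\rho}_D = ik_1\hat\rho_D$ gives $\|\p_{x_1}\rho\|_{\dot H_0^{-1}}^2 = \sum \frac{k_1^2}{k_1^2+k_2^2}|\hat\rho_D|^2$, the identity $-\int_\Omega \rho u_2 = -g\|\p_{x_1}\rho\|_{\dot H_0^{-1}}^2$ follows.

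One technical point deserving attention: the Biot--Savart formula \eqref{uBS} expresses $u$ via the Dirichlet Laplacian $(-\Delta_D)^{-1}$ acting on $\p_{x_1}\rho$, whereas $\rho$ itself satisfies Neumann conditions — so when I apply $(-\Delta_D)^{-1}$ to $\p_{x_1}\rho$, I should note that $\p_{x_1}\rho$ has mean zero in $x_1$ on each horizontal slice and the construction of $u$ from the stream function $\psi$ (Dirichlet on $\p\Omega$) is the correct one; the Fourier-side computation with the Dirichlet expansion of $\p_{x_1}\rho$ handles this automatically since $\widehat{\p_{x_1}\rho}_D(k_1,k_2) = ik_1 \hat\rho_N(k_1,k_2)$ up to the correspondence between the two transforms for the $x_1$-dependent part. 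The main obstacle, then, is not conceptual but bookkeeping: correctly tracking the sign in the advection computation and confirming that no boundary terms survive — in particular verifying that the chemotactic flux and the IPM velocity both have vanishing normal component on $\T\times\{0,\pi\}$, which is exactly where the boundary conditions in \eqref{eq:KSIPM} and the choice of test function $\phi = \pi - x_2$ (which is nonnegative on $\Omega$, relevant for \eqref{aux13e}, though its derivative rather than its sign is what matters in \eqref{eq:dtE}) are used. I would therefore organize the proof as: (i) differentiate $E$ and substitute the PDE; (ii) handle the diffusion term by one integration by parts; (iii) handle the Keller--Segel term by one integration by parts, checking the boundary; (iv) handle the advection term via incompressibility plus one integration by parts to reduce to $-\int_\Omega \rho u_2$, then use \eqref{uBS} and a Fourier computation to identify it with $-g\|\p_{x_1}\rho\|_{\dot H_0^{-1}}^2$.
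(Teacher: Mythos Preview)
Your approach is correct and essentially identical to the paper's (much terser) proof: differentiate $E$, substitute the PDE, integrate each term by parts using the boundary conditions, and for the advection term reduce to $-\int_\Omega \rho u_2\,dx$ and invoke \eqref{uBS} to identify it with $-g\|\p_{x_1}\rho\|_{\dot H_0^{-1}}^2$. Your sign confusion is legitimate and not your fault: the formula \eqref{uBS} as printed is off by a sign (deriving the stream function from Darcy's law gives $\Delta_D\psi = g\p_{x_1}\rho$, hence $u_2 = -g\p_{x_1}(-\Delta_D)^{-1}\p_{x_1}\rho$), and with this correction the identity $-\int_\Omega\rho u_2\,dx = -g\|\p_{x_1}\rho\|_{\dot H_0^{-1}}^2$ follows by one integration by parts in $x_1$ exactly as the paper's proof asserts; note also that your own Fourier arithmetic slips ($-g\cdot(-k_1^2)\neq -gk_1^2$), but this is moot once the sign of $u_2$ is corrected.
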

\begin{proof}
The proposition follows from a straightforward computation. Using integration by parts and the representation \eqref{uBS}, we compute
\begin{align*}
    E'(t) &= \int_\Omega \rho u_2 dx - \int_\Omega \p_{x_2} \rho dx + \int_\Omega \rho\p_{x_2}(-\Delta_N)^{-1}(\rho - \rho_M)dx \notag\\
    &= -g\int_\Omega \p_{x_1}\rho (-\Delta_D)^{-1}\p_{x_1}\rho dx - \int_\Omega \p_{x_2} \rho dx + \int_\Omega \rho\p_{x_2}(-\Delta_N)^{-1}(\rho - \rho_M)dx, \notag 
\end{align*}
which is exactly \eqref{eq:dtE} by the definition of  $\|\cdot\|_{\dot H^{-1}_0}$ norm.
\end{proof}

\section{Partition into ``good'' and ``bad'' time intervals}

In this section, we suppose that $\rho(t,x)$ forms a finite time singularity at $T_* < \infty$. Let $N_0 \in \N$ be sufficiently large, in particular so that $2^{N_0-1} > \|\rho_0 - \rho_M\|_{L^2}^2$.
Let us define
\[ t_1 := \sup \{ t \in [0,T_*)\;|\; \|\rho(\cdot,t)-\rho_M\|_{L^2}^2 = 2^{N_0} \}. \]
{Note that $t_1$ is well-defined thanks to the regularity criterion established in Corollary \ref{cor:L2criterion}, which rules out oscillations in time near $T_*$.} Inductively, given $t_k$ such that $\|\rho(\cdot,t_k)-\rho_M\|_{L^2}^2 = 2^{N}$ for some $N \geq N_0,$ let us define
$t_{k+1} \in (t_k, T_*)$ to be the smallest time such that either $\|\rho(\cdot,t_{k+1})-\rho_M\|_{L^2}^2 = 2^{N-1}$ or $\|\rho(\cdot,t_{k+1})-\rho_M\|_{L^2}^2 = 2^{N+1}.$
It is clear that an infinite sequence
$t_k,$ $k=1,2,...$ is well defined 
since by Corollary~\ref{cor:L2criterion} we see that the $L^2$ norm of $\rho$ has to blowup at $T_*$.
It is also clear that for $t \in (t_k, t_{k+1}),$ we have $2^{N-1}<\|\rho(\cdot,t)-\rho_M\|_{L^2}^2 < 2^{N+1}.$  See Figure~\ref{fig_intervals} for an illustration of the time sequence $\{t_k\}_{k\in\mathbb{N}}$.

Once the sequence of times $\{t_k\}_{k\in\mathbb{N}}$ is determined as above, for each time interval $(t_k, t_{k+1})$, we will classify its \emph{level} based on the values of $\|\rho(\cdot,t)-\rho_M\|_{L^2}^2$ at the two endpoints, and name it either as ``good'' or ``bad'' from the perspective of a finite time singularity formation: the ``good intervals'' contribute to the blowup of $L^2$ norm of $\|\rho(\cdot,t)-\rho_M\|_{L^2}^2$; whereas on the ``bad intervals'' the norm gets driven down.
 The precise definition is as follows:
\begin{definition}
For $k\in\mathbb{N}$ and $N\geq N_0$, we call an interval $(t_k,t_{k+1})$ a good interval of level $N$ if $\|\rho(\cdot,t_k)-\rho_M\|_{L^2}^2 = 2^{N}$ and
$\|\rho(\cdot,t_{k+1})-\rho_M\|_{L^2}^2 = 2^{N+1}.$ We call an interval $(t_k,t_{k+1})$ a bad interval of level $N$ if $\|\rho(\cdot,t_k)-\rho_M\|_{L^2}^2 = 2^{N+1}$ and
$\|\rho(\cdot,t_{k+1})-\rho_M\|_{L^2}^2 = 2^{N}.$
\end{definition}

See Figure~\ref{fig_intervals} for an illustration of good and bad intervals at various levels.
Note that on a good interval of level $N$ we have $2^{N-1}<\|\rho(\cdot,t)-\rho_M\|_{L^2}^2 < 2^{N+1}$; whereas on a bad interval of level $N$ we have $2^{N}<\|\rho(\cdot,t)-\rho_M\|_{L^2}^2 < 2^{N+2}$.

\begin{figure}[htb]
\begin{center}
\includegraphics[scale=1]{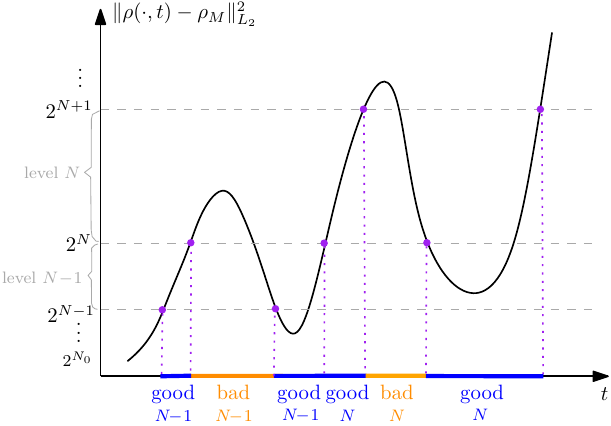}
\caption{
\label{fig_intervals}
The times $\{t_k\}$ are marked by the dotted vertical lines. On the $t$ axis, the good intervals are marked in blue color, and the bad intervals are marked in orange color.}
\end{center}
\end{figure}

\begin{lem}\label{timeint}
Suppose that $\rho(t,x)$ forms a finite time singularity at $T_* < \infty.$
Then for any $N\geq N_0$, there can only be a finite number of good and bad time intervals of a given level $N$ in $[0,T_*).$
These level-$N$ intervals intertwine, and the number of good intervals of level $N$ must exceed the number
of bad time intervals of level $N$ by one.
\end{lem}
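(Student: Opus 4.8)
The plan is to analyze the structure of the sequence $\{t_k\}$ restricted to a fixed level $N$. First I would observe that by Corollary~\ref{timecor}, any time the $L^2$ norm squared traverses from $2^N$ to $2^{N+1}$ (or vice versa), a definite amount of time $c_0 2^{-N}$ must elapse. Since $[0,T_*)$ has finite length $T_*$, there can be at most $T_* c_0^{-1} 2^N < \infty$ such traversals, hence only finitely many good or bad intervals of level $N$. This handles finiteness.

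For the intertwining and counting, I would argue as follows. By the definition of the $t_k$'s, consecutive times $t_k, t_{k+1}$ always correspond to $L^2$ norm squared values $2^N$ and $2^{N\pm1}$ that differ by one dyadic step; thus the sequence of values $a_k := \log_2 \|\rho(\cdot,t_k) - \rho_M\|_{L^2}^2$ is a nearest-neighbor walk on the integers $\geq N_0$, starting at $a_1 = N_0$. An interval $(t_k, t_{k+1})$ is a good interval of level $N$ exactly when $(a_k, a_{k+1}) = (N, N+1)$, and a bad interval of level $N$ exactly when $(a_k, a_{k+1}) = (N+1, N)$. So good and bad level-$N$ intervals correspond precisely to upward and downward crossings of the edge between $N$ and $N+1$ in this walk. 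Since the walk takes unit steps, consecutive crossings of a fixed edge must alternate in direction — between two successive upward crossings the walk must return below, which requires a downward crossing, and conversely. This gives the intertwining.

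Finally, for the count: since $\rho$ blows up at $T_*$, Corollary~\ref{cor:L2criterion} tells us $\|\rho(\cdot,t) - \rho_M\|_{L^2} \to \infty$, so $a_k \to \infty$. In particular the walk eventually stays above $N+1$ forever, meaning the last crossing of the edge between $N$ and $N+1$ is an upward one, i.e.\ a good interval. The walk also starts at $a_1 = N_0 \leq N$, so (if any level-$N$ crossings occur at all) the first one is upward as well, a good interval. Combined with strict alternation, the sequence of level-$N$ crossings reads good, bad, good, bad, $\dots$, good — hence the number of good intervals exceeds the number of bad intervals by exactly one. (If there are no level-$N$ crossings, one should note this cannot happen for $N \geq N_0$ since the walk starts at $N_0$ and tends to infinity, so it must cross every edge at or above level $N_0$ at least once.)

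**The main obstacle** is mostly bookkeeping: one must be careful that the definition of $t_{k+1}$ as the \emph{smallest} time at which the norm squared hits either neighboring dyadic value indeed produces a genuine nearest-neighbor walk with no skipped levels, and that the walk is infinite (which follows from the blowup criterion, since the norm cannot stay bounded on $[0,T_*)$). The alternation argument is then elementary. A minor point to verify cleanly is that "the first level-$N$ crossing is upward": this uses $a_1 = N_0 \le N$ together with the fact that to reach a value $> N+1$ the walk must first make an $N \to N+1$ step, since steps are of size one.
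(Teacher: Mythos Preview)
Your approach via the nearest-neighbor walk $a_k = \log_2\|\rho(\cdot,t_k)-\rho_M\|_{L^2}^2$ is essentially the paper's argument, phrased cleanly in edge-crossing language. There is one small gap in the finiteness step: you claim Corollary~\ref{timecor} gives a lower bound $c_0 2^{-N}$ on the duration of \emph{both} upward and downward traversals of the edge between $N$ and $N+1$. It does not. The corollary comes from the one-sided differential inequality $\frac{d}{dt}\|\rho-\rho_M\|_{L^2}^2 \le C\|\rho-\rho_M\|_{L^2}^4$, which bounds only the \emph{growth} rate; it therefore yields a minimum length only for good intervals. Bad intervals can in principle be arbitrarily short --- indeed, Lemma~\ref{lem_trho}(b) later exploits exactly this possibility.

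The fix is simply to reorder your argument: establish intertwining first (your alternation-of-crossings observation requires no finiteness at all), then deduce finiteness of good intervals from Corollary~\ref{timecor}, and finally conclude finiteness of bad intervals from intertwining (between any two bad intervals of level $N$ lies a good one, so their number is at most the number of good ones). This is precisely the order the paper follows; with that adjustment your proof is correct and matches the paper's.
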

\begin{proof}
It is clear that the first interval of level $N$ must be good; it starts at the smallest $t_k \in [t_1,\infty)$ such that $\|\rho(\cdot,t_k)-\rho_M\|_{L^2}^2=2^N$
and after $t_k$ the norm $\|\rho(\cdot,t)-\rho_M\|_{L^2}^2$ reaches $2^{N+1}$ at a time $t_{k+1}$ before it reaches $2^{N-1}.$

To show the good and bad intervals of level $N$ intertwine, we start by showing that between any two good intervals of level $N$, there exists a bad interval of level $N$. To see this, observe that by definition, the good interval of level $N$ is followed either by a good interval of level $N+1$ or a bad interval of level $N,$
while a bad interval of level $N$ is followed either by a good interval of level $N$ or by a bad interval of level $N-1.$ Hence we can arrive at the
next good interval of level $N$ only from a bad interval of level $N$ or a good interval of level $N-1.$ But we can descend the levels only along bad intervals,
and we cannot make any jumps. Thus to arrive at a good interval of level $N-1,$ we still have to go through some bad interval of level $N.$
A similar reasoning applies to bad intervals of level $N$: there always has to be a good interval of level $N$ between them.

Next we show that the total number of level $N$ intervals is finite. Observe that by Corollary~\ref{timecor}, any good interval of level $N$ has length at least $c_0 2^{-N}$.
Hence there can only be a finite number of good intervals of level $N$ in $[t_1,T_*)$. Since we have shown that the good and bad level $N$ intervals intertwine,  the total number of level $N$ intervals must also be finite.

We have shown in the beginning that the first  interval of level $N$ must be good. In fact, the last one must also be good, otherwise $\rho$ would not form a finite time blowup. Combining this with the intertwining of good and bad interval of level $N$, we know the number of good intervals of level $N$ exceeds the number of bad intervals of level $N$ by one.
\end{proof}

If $(t_k, t_{k+1})$ is a level $N$ interval (can be either good or bad), from the definition we immediately have $\int_{t_k}^{t_{k+1}} \|\rho-\rho_M\|_{L^2}^2 dt \sim 2^N (t_{k+1}-t_k)$.
Recall that $\|\rho-\rho_M\|_{L^2}^2 = \|\brho - \rho_M\|_{L^2}^2 + \|\trho\|_{L^2}^2 $, where the two terms on the right hand side play opposite roles in the growth of $L^2$ norm: see \eqref{est:mainenergy}.
For this reason, our consideration will depend on whether $\int_{t_k}^{t_{k+1}} \|\trho\|_{L^2}^2 dt \sim 2^N (t_{k+1}-t_k)$ holds. The next lemma shows that this always holds in a good interval. For a bad interval this might not be true; but in this case $t_{k+1}-t_k$ must be very short.

\begin{lem}\label{lem_trho}
Let $N_0$ be sufficiently large, which only depends on $\rho_M$.
\begin{enumerate}
\item[(a)]
Assume that $N\geq N_0$ and  $(t_k, t_{k+1})$ is a good interval of level $N$. Then $\trho$ satisfies
\begin{equation}\label{eq_lemma1}
\int_{t_k}^{t_{k+1}} \|\trho\|_{L^2}^2 \,dt \geq 2^{N-2} (t_{k+1}-t_k).
\end{equation}
\item[(b)]
Assume that $N\geq N_0$ and  $(t_k, t_{k+1})$ is a bad interval of level $N$. Assume in addition that $\trho$ does not satisfy \eqref{eq_lemma1}. Then $t_{k+1}-t_k < C \rho_M^4 2^{-2N}$ for some universal constant $C>0$.
\end{enumerate}
\end{lem}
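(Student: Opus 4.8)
The plan is to deduce both parts from a single computation: integrate the sharpened energy inequality \eqref{est:mainenergy} of Corollary~\ref{coren} over the interval $I:=(t_k,t_{k+1})$ (write $|I|:=t_{k+1}-t_k$), and exploit the fact that a large value of $\|\brho-\rho_M\|_{L^2}$ forces a strongly negative term $-\frac{C_2}{\rho_M^4}\|\brho-\rho_M\|_{L^2}^6$ in $\frac{d}{dt}\|\rho-\rho_M\|_{L^2}^2$. Concretely, if $\trho$ fails \eqref{eq_lemma1} then $\|\brho-\rho_M\|_{L^2}^2$ must be of size $\gtrsim 2^N$ on a definite fraction of $I$, which makes $\|\rho-\rho_M\|_{L^2}^2$ decrease at average rate $\gtrsim \rho_M^{-4}2^{3N}$; in a good interval this is absurd, since the norm in fact grows from $2^N$ to $2^{N+1}$, while in a bad interval it pins $|I|$ from above.

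So suppose $\int_I\|\trho\|_{L^2}^2\,dt<2^{N-2}|I|$ --- this is precisely the standing hypothesis of part (b), and the negation of \eqref{eq_lemma1} that I want to rule out in part (a). On a level-$N$ interval of either type, $\|\rho-\rho_M\|_{L^2}^2\ge 2^{N-1}$ throughout $I$, so by orthogonality ($\|\rho-\rho_M\|_{L^2}^2=\|\brho-\rho_M\|_{L^2}^2+\|\trho\|_{L^2}^2$) and the assumed smallness,
$$
\int_I\|\brho-\rho_M\|_{L^2}^2\,dt\ \ge\ 2^{N-1}|I|-2^{N-2}|I|\ =\ 2^{N-2}|I|.
$$
Jensen's inequality for the convex map $s\mapsto s^3$, applied to the nonnegative quantity $\|\brho-\rho_M\|_{L^2}^2$, upgrades this to $\int_I\|\brho-\rho_M\|_{L^2}^6\,dt\ge |I|^{-2}\big(\int_I\|\brho-\rho_M\|_{L^2}^2\,dt\big)^3\ge 2^{3N-6}|I|$. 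Moreover, since $\|\trho\|_{L^2}^2\le\|\rho-\rho_M\|_{L^2}^2\le 2^{N+2}$ on $I$, we have $\|\trho\|_{L^2}^4\le 2^{N+2}\|\trho\|_{L^2}^2$, hence $\int_I\|\trho\|_{L^2}^4\,dt<2^{N+2}\cdot 2^{N-2}|I|=2^{2N}|I|$.

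Integrating \eqref{est:mainenergy} over $I$ (its hypothesis $\|\rho-\rho_M\|_{L^2}^2\ge 2^{N_0}$ holds on all of $[t_k,t_{k+1}]$ once $N_0$ is large, since on $I$ one has $\|\rho-\rho_M\|_{L^2}^2>2^{N-1}\ge 2^{N_0-1}$) and inserting the two estimates just derived gives
$$
\|\rho(\cdot,t_{k+1})-\rho_M\|_{L^2}^2-\|\rho(\cdot,t_k)-\rho_M\|_{L^2}^2\ \le\ -\frac{C_2}{\rho_M^4}2^{3N-6}|I|+C_3\,2^{2N}|I|\ =\ 2^{2N}|I|\Big(C_3-\frac{C_2}{\rho_M^4}2^{N-6}\Big).
$$
Now take $N_0$ large enough, depending only on $\rho_M$, that $\frac{C_2}{\rho_M^4}2^{N-6}\ge 2C_3$ for all $N\ge N_0$; then the right-hand side is $\le -\frac{C_2}{\rho_M^4}2^{3N-7}|I|<0$. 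In case (a) the left-hand side equals $2^{N+1}-2^N=2^N>0$, a contradiction, so \eqref{eq_lemma1} must hold. In case (b) the left-hand side equals $2^N-2^{N+1}=-2^N$, whence $\frac{C_2}{\rho_M^4}2^{3N-7}|I|\le 2^N$, i.e.\ $|I|\le\frac{128}{C_2}\rho_M^4\,2^{-2N}$, which is the asserted bound.

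I do not anticipate a genuine analytic obstacle: the essential work is already contained in Corollary~\ref{coren}, where the density is split into $\brho$ and $\trho$ and the $\|\brho-\rho_M\|_{L^2}^6$ dissipation term is extracted; what remains is the elementary convexity step turning $L^2_t$ control of $\|\brho-\rho_M\|_{L^2}^2$ into $L^6_t$ control, plus bookkeeping. The only points needing care are checking the hypothesis of Corollary~\ref{coren} on the whole closed interval (immediate for bad intervals; for good intervals one uses $\|\rho-\rho_M\|_{L^2}^2>2^{N-1}$ and enlarges $N_0$ if necessary), and tracking the powers of $2$ and of $\rho_M$ so that the required size of $N_0$ depends on $\rho_M$ alone.
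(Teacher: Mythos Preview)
Your proof is correct and follows essentially the same route as the paper's: assume \eqref{eq_lemma1} fails, use orthogonality and the lower bound $\|\rho-\rho_M\|_{L^2}^2>2^{N-1}$ to get $\int_I\|\brho-\rho_M\|_{L^2}^2\,dt\ge 2^{N-2}|I|$, upgrade to the sixth power by convexity/H\"older, bound $\int_I\|\trho\|_{L^2}^4\,dt$ using $\|\trho\|_{L^2}^2\le 2^{N+2}$, and feed both into the time-integrated \eqref{est:mainenergy}. The only (inconsequential) difference is that you sharpen the $\int_I\|\trho\|_{L^2}^4\,dt$ bound by also invoking the assumed smallness of $\int_I\|\trho\|_{L^2}^2\,dt$, whereas the paper simply uses the pointwise bound $\|\trho\|_{L^2}^4\le 2^{2N+4}$.
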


\begin{proof}
For any (good or bad) interval $(t_k, t_{k+1})$ of level $N$, assume that  \eqref{eq_lemma1} fails on this interval, i.e.
\begin{equation}\label{eq_lemma}
\int_{t_k}^{t_{k+1}} \|\trho\|_{L^2}^2 \, dt < 2^{N-2} (t_{k+1}-t_k).
\end{equation}
We first aim to show that $(t_k, t_{k+1})$ cannot be a good interval.  Note that $\|\rho(\cdot,t)-\rho_M\|_{L^2}^2 > 2^{N-1}$ for $t\in(t_k, t_{k+1})$.
Combining this with \eqref{eq_lemma} and the fact that $\|\rho-\rho_M\|_{L^2}^2 = \|\brho - \rho_M\|_{L^2}^2 + \|\trho\|_{L^2}^2 $ gives
\[
\int_{t_k}^{t_{k+1}} \|\brho-\rho_M\|_{L^2}^2 \,dt > \int_{t_k}^{t_{k+1}} (2^{N-1}- \|\trho\|_{L^2}^2  )\, dt > 2^{N-2} (t_{k+1}-t_k).
\]
Applying H\"older's inequality to the above yields
\[
\begin{split}
\int_{t_k}^{t_{k+1}} \|\brho-\rho_M\|_{L^2}^6 \, dt &\geq \left(\int_{t_k}^{t_{k+1}} \|\brho-\rho_M\|_{L^2}^2 \, dt\right)^3 (t_{k+1}-t_k)^{-2}\\
& > 2^{3N-6} (t_{k+1}-t_k).
\end{split}
\]
Also note that \eqref{eq_lemma} and the fact that $\|\trho\|_{L^2}^2\leq \|\rho-\rho_M\|_{L^2}^2 \leq 2^{N+2}$ in $(t_k,t_{k+1})$ imply
\[
\int_{t_k}^{t_{k+1}} \|\trho\|_{L^2}^4 dt \leq 2^{2N+4} (t_{k+1}-t_k).
\]
Integrating \eqref{est:mainenergy} in $(t_k, t_{k+1})$ and using the above two inequalities, we have
\begin{equation}\label{temp3}
\|\rho(\cdot,t_{k+1})-\rho_M\|_{L^2}^2 - \|\rho(\cdot, t_{k}) -\rho_M\|_{L^2}^2 < \left(-2^{3N-6} C_2 \rho_M^{-4} + 2^{2N+4} C_3 \right) (t_{k+1}-t_k).
\end{equation}
Let $N_0$ be sufficiently large such that $2^{N_0-11} C_2 \rho_M^{-4} \geq C_3$. For any $N\geq N_0$, we have
\begin{equation}\label{temp4}
\|\rho(\cdot,t_{k+1})-\rho_M\|_{L^2}^2 - \|\rho(\cdot,t_{k})-\rho_M\|_{L^2}^2  < -2^{3N-7} C_2 \rho_M^{-4} (t_{k+1}-t_k) < 0.
\end{equation}
Thus $(t_k, t_{k+1})$ cannot be a good interval: for a good interval we have that the left hand side of \eqref{temp4} is $2^N$, which contradicts \eqref{temp4}.
Finally, if $(t_k, t_{k+1})$ is a bad interval of level $N$, then \eqref{temp4} imply
\[
-2^N < -2^{3N-7} C_2 \rho_M^{-4} (t_{k+1}-t_k).
\]
Thus $t_{k+1}-t_k < C \rho_M^4 2^{-2N}$, finishing the proof.
\end{proof}

\section{Main estimates}
\subsection{Key lemmas}
We first collect a few technical results that will be fundamental in the estimates on both good and bad intervals. 

The following proposition applies to any $\rho \in \dot H^1(\Omega)$ and can be of independent interest.
While we will prove it for the set $\Omega$, it has straightforward (and in fact simpler to prove) analogs when the domain does not have a boundary, for example in the $\T^2$ or $\R^2$ case.
\begin{prop}
\label{prop:concentration326}
Assume that $\rho\in \dot{H}^1(\Omega)$. For some $N\geq 1$, assume that
    \begin{equation}\label{Hassump}
        \|\p_{x_1} \rho\|_{\dot H^{-1}_{0}(\Omega)}^2  \leq  N^{-1} \|\trho\|_{L^2(\Omega)}^2.
    \end{equation}
Then
        \begin{equation}\label{GNN1}
\|\trho\|_{L^2(\Omega)}^2 \le CN^{-1/4} \|\trho\|_{L^1(\Omega)} \|\nabla \trho\|_{L^2(\Omega)}
        \end{equation}
        for some universal constant $C.$
\end{prop}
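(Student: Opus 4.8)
The plan is to exploit the smallness assumption \eqref{Hassump} to show that $\trho$ cannot oscillate too much in the $x_1$ direction, so that $\trho$ is effectively a one-dimensional (in $x_2$) function up to a controlled error, and then apply the one-dimensional Gagliardo–Nirenberg inequality $\|f\|_{L^2([0,\pi])}^2 \le C \|f\|_{L^1([0,\pi])}\|\p_{x_2} f\|_{L^2([0,\pi])}$ (Nash's inequality in 1D), which has exactly the exponent structure of \eqref{GNN1}. The gain of the factor $N^{-1/4}$ must come from quantifying how close $\trho$ is to its $x_1$-average; but note $\trho$ already has zero $x_1$-average by definition, so the point is rather to control $\trho$ itself by its high-$x_1$-frequency content versus its low-frequency part using \eqref{Hassump}.

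First I would work on the Fourier side using the Dirichlet eigenfunction expansion (legitimate since each $x_1$-slice of $\trho$ vanishes at $x_2 = 0,\pi$ in the $\dot H^1$ sense, and $\trho$ has no $k_1 = 0$ mode). Split $\trho = \trho_{\mathrm{low}} + \trho_{\mathrm{high}}$ according to $|k_1| \le K$ versus $|k_1| > K$, where $K \sim N^{1/2}$ will be chosen. For the high part, $\|\trho_{\mathrm{high}}\|_{L^2}^2 \le K^{-2}\|\p_{x_1}\trho\|_{L^2}^2 \lesssim K^{-2}\|\nabla \trho\|_{L^2}^2$; this is already small if $K$ is large, but it is not obviously in the right form. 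Instead, the more promising route is: for the low part, interpolate $\|\trho_{\mathrm{low}}\|_{L^2}$ between $\|\p_{x_1}\rho\|_{\dot H_0^{-1}}$ and $\|\nabla\trho\|_{L^2}$ — since on frequencies with $1 \le |k_1| \le K$ one has $(k_1^2 + k_2^2)^{-1} \gtrsim (1+K^2/k_2^2)^{-1} k_2^{-2}$, bounding $\|\p_{x_1}\rho\|_{\dot H_0^{-1}}^2$ from below by roughly $K^{-2}\sum k_1^2(k_1^2+k_2^2)^{-1}|\hat\rho_D|^2$ on a suitable subrange, and then combine with \eqref{Hassump} to get $\|\trho\|_{L^2}^2 \lesssim N^{-1}\|\trho\|_{L^2}^2 \cdot (\text{stuff})$, forcing a smallness bound. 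The cleanest implementation is likely: on each fixed $k_2$, compare the three $k_1$-weights $1$, $(k_1^2+k_2^2)^{-1}$, and $k_1^2$, and use \eqref{Hassump} together with $\|\trho\|_{L^1} \ge \|\trho\|_{L^2}^2/\|\trho\|_{L^\infty}$-type reasoning replaced by a genuine 1D GNS argument slice-by-slice followed by summation in $x_1$-frequency, extracting the $N^{-1/4}$ from the Hölder balance between the $L^1$ and the $\dot H^{-1}_0$ smallness.

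More concretely, the step I would carry out in detail: apply the two-dimensional Gagliardo–Nirenberg–Sobolev inequality $\|\trho\|_{L^2}^2 \lesssim \|\trho\|_{L^1}\|\nabla\trho\|_{L^2}$ is \emph{false} in 2D (wrong scaling), so instead use the anisotropic estimate obtained by first applying 1D-in-$x_2$ Nash to each $x_1$-slice, $\int_\Omega |\trho|^2 dx \lesssim \int_\T \|\trho(x_1,\cdot)\|_{L^1_{x_2}}\|\p_{x_2}\trho(x_1,\cdot)\|_{L^2_{x_2}} dx_1 \lesssim \|\trho\|_{L^1(\Omega)}^{?}\cdots$, which by Cauchy–Schwarz in $x_1$ gives $\|\trho\|_{L^2(\Omega)}^2 \lesssim \|\trho\|_{L^1(\Omega)}^{1/2}\|\trho\|_{L^2(\Omega)}^{1/2}\|\p_{x_2}\trho\|_{L^2(\Omega)}$ — wait, that requires care. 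The actual mechanism for the $N^{-1/4}$ gain: \eqref{Hassump} says the $x_1$-Fourier mass of $\trho$, when weighted by $(k_1^2+k_2^2)^{-1}$, is $\le N^{-1}\|\trho\|_{L^2}^2$; since $\trho$ has $|k_1|\ge 1$, this forces the bulk of $\|\trho\|_{L^2}^2$ to sit at $|k_1|^2 \gtrsim N$ (otherwise the weight $(k_1^2+k_2^2)^{-1} \gtrsim N^{-1}$ would make the left side comparable to $\|\trho\|_{L^2}^2$, contradiction for large $N$). Then on $|k_1|^2 \gtrsim N$ one has $\|\trho\|_{L^2}^2 \lesssim N^{-1/2}\|\p_{x_1}\trho\|_{\dot H^0}\cdots$ — and combining this $N^{-1/2}$ in $x_1$-derivative with the 1D Nash in $x_2$ and the $L^1$ bound, by an interpolation/Hölder with weights $(1/2, 1/2)$ one arrives at the exponent $N^{-1/4}$ in front of $\|\trho\|_{L^1}\|\nabla\trho\|_{L^2}$.

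\textbf{Main obstacle.} The delicate point is getting the correct power $N^{-1/4}$ rather than $N^{-1/2}$ or $N^{-1/8}$: this requires carefully tracking which norm absorbs the frequency-localization gain and using Hölder with precisely the right exponents in the interpolation between the $\dot H^{-1}_0$-smallness (weight $-1$ in $k_1$), the $L^2$ energy, and the $\dot H^1$ gradient control. A second, more technical nuisance is justifying the Dirichlet eigenfunction expansion and the slice-wise 1D inequalities given that $\trho\in\dot H^1(\Omega)$ only has, a priori, zero boundary trace in the homogeneous sense; this is routine but must be stated. I expect the bulk of the work to be the clean bookkeeping of the three competing $k_1$-weights on each $k_2$-slice, followed by a summation in $k_2$ that does not lose the gain.
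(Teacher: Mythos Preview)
Your proposal has the Fourier concentration backwards, and this is the central gap. The weight in $\|\p_{x_1}\rho\|_{\dot H^{-1}_0}^2$ is $k_1^2/(k_1^2+k_2^2)$, so assumption \eqref{Hassump} says this weight is $\lesssim N^{-1}$ on average, which forces the bulk of $\|\trho\|_{L^2}^2$ into the region $k_2 \gtrsim N^{1/2}|k_1|$ --- a narrow cone around the $k_2$-axis --- \emph{not} into $|k_1|^2\gtrsim N$ as you claim. (For a check: a single mode at $(k_1,k_2)=(1,\lceil N^{1/2}\rceil)$ satisfies \eqref{Hassump} but has $|k_1|=1$.) Consequently your proposed mechanism ``$\|\trho\|_{L^2}\lesssim N^{-1/2}\|\p_{x_1}\trho\|_{L^2}$'' does not follow, and the 1D-in-$x_2$ Nash route you sketch does not obviously extract the gain either, since $\trho$ still has $x_1$-oscillation (mean zero in $x_1$) even inside the cone. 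The correct source of the $N^{-1/4}$ is much simpler once the cone picture is in hand: run the standard Nash argument (low/high split at $|k|=\lambda$) and use that the cone $\{k_2\gtrsim N^{1/2}|k_1|\}$ contains only $\sim \lambda^2 N^{-1/2}$ lattice points in $\{|k|<\lambda\}$, so the low-frequency piece is bounded by $\lambda^2 N^{-1/2}\|\trho\|_{L^1}^2$ instead of $\lambda^2\|\trho\|_{L^1}^2$; optimizing in $\lambda$ gives exactly $N^{-1/4}$.

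Two further errors. First, $\|\trho\|_{L^2}^2\lesssim \|\trho\|_{L^1}\|\nabla\trho\|_{L^2}$ is \emph{not} false in 2D --- it is precisely the two-dimensional Nash inequality, and \eqref{GNN1} is an improvement of it by $N^{-1/4}$. Second, $\trho$ does \emph{not} vanish at $x_2=0,\pi$: $\rho$ satisfies Neumann conditions, and so does $\trho=\rho-\brho$. This is not a ``routine nuisance'' but a genuine obstacle: the cone localization comes naturally from the Dirichlet expansion (because $\dot H^{-1}_0$ is Dirichlet-based), yet $\|\nabla\trho\|_{L^2}^2$ equals $\sum|k|^2|\hat\trho_N(k)|^2$ only in the Neumann expansion (in the Dirichlet expansion that sum is infinite). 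The paper spends real effort transferring the cone concentration from the Dirichlet to the Neumann basis before running the Nash argument, and you would need to do the same.
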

\begin{rmk}  Observe that this Proposition can be restated in the following form: 
\[ \|\trho\|_{L^2(\Omega)} \le C \|\p_{x_1} \rho\|_{\dot H^{-1}_{0}(\Omega)}^{1/5} \|\trho\|_{L^1(\Omega)}^{2/5} \|\nabla \trho\|_{L^2(\Omega)}^{2/5}, \]
giving a stronger generalized 2D Nash inequality for $\trho.$ As the norm $\|\p_{x_1} \rho\|_{\dot H^{-1}_{0}(\Omega)}$ appears naturally in the IPM setting, 
we expect the inequality to be useful in further analysis of this equation. 
\end{rmk}

One can think of the proposition as a quantitative improvement of the Nash's inequality.  Without the assumption \eqref{Hassump}, the Nash's inequality exactly looks like \eqref{GNN1} with some order-one coefficient on the right hand side. We aim to show that the coefficient can be made arbitrarily small (namely, into $C N^{-1/4}$) if $\rho$ satisfies \eqref{Hassump},
which can be understood as some sort of ``mixing'' assumption on $\rho$ in one direction. Indeed, the $H^{-1}$ norm is often used as a measure of mixing (see e.g. \cite{IKX,LLMND,MMP}).
In our context this assumption will imply that $\rho$ is more regular in $x_1$ than in $x_2$: specifically, on the Fourier side much of its $L^2$ norm is supported in a narrow cone along the $k_2$-axis.
This will allow an improvement in the constant in the Nash inequlity \eqref{GNN1} that is crucial to establish Theorem~\ref{mainthm}.

\begin{proof}
In proving this proposition, we will have to negotiate bounds in Dirichlet and Neumann Laplacian eigenfunction expansions. Indeed, the $\dot H^{-1}_0$ norm assumption leads naturally to information best stated in the Dirichlet context  - but $\trho$ does not satisfy
the Dirichlet boundary condition and to obtain an accurate estimate involving $\|\nabla \trho\|_{L^2}$ we will have to relay the bounds to the Neumann basis.

Define the cones \[ \mC_1 = \{k = (k_1,k_2) \in \Z^2\;|\; k_2 \ge \aaa N^{1/2}|k_1|\} \] and \[ \mC_2 = \{k = (k_1,k_2) \in \Z^2\;|\; k_2 \ge \aaa^2 N^{1/2}|k_1|\},\]
where $\aaa\in(0,1)$ is a small parameter to be fixed later. The choice of $\aaa$ will be universal, not depending on any parameters of the problem. See Figure~\ref{cones} for an illustration of the cones.

\begin{figure}[htb]
\begin{center}
\includegraphics[scale=1]{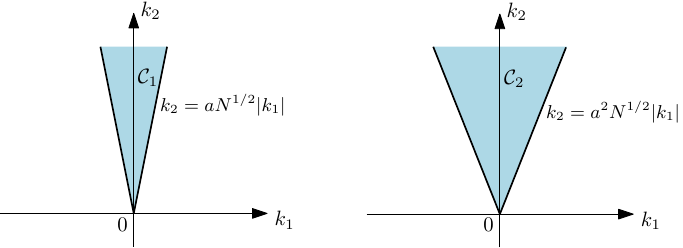}
\caption{
\label{cones}An illustration of the cones in the definitions of $\mathcal{C}_1$ and $\mathcal{C}_2$ for some $N\gg 1$. Note that $\mathcal{C}_1$ and $\mathcal{C}_2$ are subsets of $\mathbb{Z}^2$, thus they only contain the integer points $k\in\mathbb{Z}^2$ that fall in the shaded regions.}
\end{center}
\end{figure}

Let us denote $P_1^D$ and $P_1^N$ the Fourier side orthogonal projectors on $\mC_1$ in the Dirichlet and on $\mC_2$ in the Neumann expansions, respectively:
\[ \widehat{(P_1^D f)}_D (k_1,k_2) = \hat{f}_D(k_1,k_2) \chi_{\mC_1}(k_1,k_2); \,\,\,\widehat{(P_1^N f)}_N (k_1,k_2) = \hat{f}_N(k_1,k_2) \chi_{\mC_2}(k_1,k_2).  \]

Let us split $\trho$ in two different ways into sums of two orthogonal components:
\[ \trho(x,t) = P_1^D \trho(x,t) + (I-P_1^D) \trho(x,t) := \trho_1^D(x,t)+\trho_2^D(x,t) \]
and
\[ \trho(x,t) = P_1^N \trho(x,t) + (I-P_1^N) \trho(x,t) := \trho_1^N(x,t)+\trho_2^N(x,t), \]
where $I$ is the identity map. Observe first that on the complement of the first cone, $\mC_1^c,$ we have
\[ \frac{k_1^2}{k_1^2 + k_2^2} \geq \frac{\aaa^{-2} N^{-1}}{\aaa^{-2} N^{-1}+ 1} \geq \frac12 {\rm min}(1,\aaa^{-2} N^{-1}). \]
At first, we will assume that $N$ is so large that  $\aaa^{-2} N^{-1} <1;$ we will explain in the end how to handle the smaller values of $N.$
Combining the above with the assumption \eqref{Hassump}, we have
\[ \frac{\aaa^{-2} N^{-1}}{2} \|(I-P_1^D)\trho\|_{L^2}^2  \leq \|\p_{x_1} \trho\|_{\dot H^{-1}_0}^2  \leq  N^{-1}\|\trho\|_{L^2}^2. \]
Hence
\begin{equation}\label{aux18d}   \|\trho_2^D\|_{L^2}^2  \leq 2 \aaa^2 \|\trho\|_{L^2}^2, \end{equation}
and therefore
\begin{equation}\label{Dexpinfo} \|\trho_1^D\|_{L^2}^2 \geq (1-2 \aaa^2) \|\trho\|_{L^2}^2; \end{equation}
here we used orthogonality of $\trho_1^D$ and $\trho_2^D.$

The estimate \eqref{Dexpinfo} shows that if $\aaa$ is sufficiently small, $\trho_1$ is mostly supported on $\mC_1$ in the Dirichlet expansion setting.
Next we translate this information into the Neumann setting. Consider
\begin{align}\label{aux18a}
(I-P_1^N) \trho_1^D(x_1,x_2) = \frac{1}{\pi} \sum_{k \in \mC_2^c, \,k_2 \geq 0} \frac{1}{1+\delta(k_2)} e^{ik_1x_1} \cos k_2 x_2 \int_\T \int_0^{\pi} \trho_1^D(y_1,y_2)e^{-ik_1 y_1}\cos(k_2 y_2)dy_2dy_1,
\end{align}
where  $\delta(k_2)=1$ if $k_2=0,$ and $0$ otherwise (due to normalization constant, as in \eqref{aux1923}), and
\begin{equation}\label{aux18b} \trho_1^D(y_1,y_2) = \frac{1}{\pi} \sum_{\tau \in \mC_1} \hat{\trho}_D(\tau_1,\tau_2)e^{i\tau_1 y_1}\sin(\tau_2 y_2). \end{equation}
Substituting \eqref{aux18b} into \eqref{aux18a} and integrating out in $y_1,$ we obtain
\[ (I-P_1^N) \trho_1^D(x_1,x_2) = \frac{2}{\pi} \sum_{k \in \mC_2^c, \,k_2 \geq 0} \frac{1}{1+\delta(k_2)} e^{ik_1x_1} \cos k_2 x_2 \sum_{\tau_2 \geq \aaa N^{1/2} |k_1|} \hat{\trho}_D(k_1,\tau_2)
\int_0^\pi \sin \tau_2 y_2 \cos k_2y_2 \,dy_2. \]
Note that in the second sum above, there are no terms corresponding to $k_1=0$ since $\hat{\trho}_D(0,\tau_2)=0$ for all $\tau_2$
due to definition of $\trho.$
Integration by parts then shows that
\[ \left| \int_0^\pi \sin \tau_2 y_2 \cos k_2y_2 \,dy_2 \right| \leq \frac{2\tau_2}{\tau_2^2-k_2^2} \leq \frac{2}{|\tau_2| - |k_2|} \leq \frac{4}{|\tau_2|} \]
provided that $\aaa \leq \frac12$ (which implies $\tau_2 > 2k_2$).
Then
\begin{equation}
\begin{split}\label{aux18e}
 \left\|(I-P_1^N) \trho_1^D\right\|_{L^2}^2 &\leq C \sum_{k_1 \in \Z} \,\, \sum_{0 \leq k_2 < \aaa^2 N^{1/2} |k_1|} \left| \sum_{\tau_2 \geq \aaa N^{1/2} |k_1|}
\hat{\trho}_D(k_1,\tau_2) \frac{1}{\tau_2} \right|^2   \\
& \leq C \aaa^2 N^{1/2} \sum_{k_1 \in \Z\setminus\{0\}} |k_1|
 \left( \sum_{\tau_2 \geq \aaa N^{1/2} |k_1|} |\hat{\trho}_D(k_1,\tau_2)|^2 \right) \left( \sum_{\tau_2 \geq \aaa N^{1/2} |k_1|}  \frac{1}{\tau_2^2}\right) \\
 &\leq  C \aaa^2 N^{1/2} \sum_{k_1 \in \Z\setminus\{0\}} |k_1| \sum_{\tau_2 \geq \aaa N^{1/2} |k_1|} |\hat{\trho}_D(k_1,\tau_2)|^2 \frac{1}{\aaa N^{1/2} |k_1|} \\
&\leq C \aaa \sum_{k_1 \in \Z} \,\, \sum_{\tau_2 \geq \aaa N^{1/2} |k_1|} |\hat{\trho}_D(k_1,\tau_2)|^2 \,dt \leq C\aaa \|\trho\|_{L^2}^2.
\end{split}
\end{equation}
Here in the first step we used the Parseval identity, in the second step summed over $k_2$ and applied the Cauchy-Schwarz inequality, and then just simplified the resulting bound.
Combining \eqref{aux18e} with \eqref{aux18d} we find that with an adjusted universal constant $C,$
\begin{equation}\label{aux18f}  \|(I-P_1^N) \trho\|_{L^2}^2  \leq C\aaa \|\trho\|_{L^2}^2 \leq \frac12 \|\trho\|_{L^2}^2;  \end{equation}
the last step follows if $\aaa$ is sufficiently small. Thus in the Neumann expansion, we have that $\trho$ is also mostly supported in a slightly larger, but still narrow cone.

Now let $\lambda > 0$ be a parameter. For any $\lambda>0$, we observe that
            \begin{align}
               \nonumber \|\trho\|_{L^2}^2 &=  \sum_{k \in \mC_2,\;0< |k| < \lambda}|\hat\trho_N(k)|^2 + \sum_{k \not\in \mC_2,\;|k| < \lambda}|\hat \trho_N(k)|^2+ \sum_{|k| \ge \lambda} |\hat \trho_N(k)|^2 \\
                \nonumber &\le  \lambda^2 \aaa^{-2} N^{-1/2}\|\hat \trho_N\|_{l^\infty(\Z^2)}^2 + \sum_{k \not\in \mC_2,\;|k| < \lambda}|\hat \trho_N(k)|^2 + \lambda^{-2}\sum_{k}|k|^2|\hat \trho_N(k)|^2 \\
              \label{aux18g}  &\le \lambda^2 \aaa^{-2} N^{-1/2}\|\trho\|_{L^1}^2 + \frac{1}{2}\|\trho\|_{L^2}^2+ \lambda^{-2}\|\nabla \trho\|_{L^2}^2.
            \end{align}
            Note that we used the estimate $|\{k \in \mC_2\} \cap \{|k| < \lambda\}| \le \lambda^2 \aaa^{-2} N^{-1/2}$ in the first inequality, and $\|\hat \trho\|_{l^\infty(\Z^2)} \le \|\trho\|_{L^1}$ with \eqref{aux18f} in the second inequality.
           Also, $\int_\Omega |\nabla f|^2\,dx$ defined on $H^1(\Omega)$ is the quadratic form corresponding to the Neumann Laplacian $-\Delta_N$, and hence $\sum_k |k|^2 |\hat f_N(k)|^2 = \|\nabla f\|^2_{L^2}$ for all $f \in H^1(\Omega).$
            Observe that all bounds we imposed on $\aaa$ were universal, and are independent on $N$ or $\lambda$. Then for sufficiently large $N$ (we only require $N>\aaa^{-2}$), the desired bound \eqref{GNN1} follows from rearranging the final line above and optimizing over $\lambda,$ absorbing $\aaa^{-1}$ into the constant.
            For smaller $N$, the bound \eqref{GNN1} follows from the usual Nash inequality, and these $N$ can be absorbed by increasing the value of constant $C$ in \eqref{GNN1} if necessary.

            Note that if we tried to run the argument leading to \eqref{aux18g} only using the Dirichlet expansion, it would not imply \eqref{GNN1}: the last sum on the second line of \eqref{aux18g} would be replaced by $\sum_{k}|k|^2|\hat \trho_D(k)|^2$, which is finite if and only if $\trho \in H_0^1(\Omega)$. In our case it would be infinity since $\trho|_{\partial \Omega}\not\equiv 0$, so the last inequality of \eqref{aux18g} would fail to hold.
\end{proof}

For our application, we need a time averaged version of \eqref{GNN1}.
\begin{cor}\label{GNNcor}
Let $0 \le s < r < T_*$, and suppose that
\begin{equation}\label{hasstime}
\int_s^r \|\p_{x_1} \rho\|_{\dot H^{-1}_{0}(\Omega)}^2 \,dt  \leq  N^{-1} \int _s^r \|\trho\|_{L^2(\Omega)}^2 \,dt.
\end{equation}
Then there exists a universal constant $C$ such that
\begin{equation}\label{GNNtime1}
\int_s^r \|\trho\|_{L^2(\Omega)}^2 \, dt \le CN^{-1/4} \int_s^r \|\trho\|_{L^1(\Omega)} \|\nabla \trho\|_{L^2(\Omega)}\,dt.
\end{equation}
In particular, \eqref{GNNtime1} implies that the following holds for some universal constant $c>0$:
\begin{equation}\label{GNN}
       \int_{s}^{r}\|\nabla \rho\|_{L^2}^2 \, dt \geq c \rho_M^{-2} N^{1/2}  (r-s)^{-1} \left(\int_{s}^{r}\|\trho\|_{L^2}^2 \, dt\right)^2.
        \end{equation}

\end{cor}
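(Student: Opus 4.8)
The plan is to deduce Corollary~\ref{GNNcor} from Proposition~\ref{prop:concentration326} by a dyadic decomposition of the time interval $[s,r]$ according to the ratio between $\|\p_{x_1}\rho\|_{\dot H^{-1}_0}^2$ and $\|\trho\|_{L^2}^2$. First I would set $g(t):=\|\p_{x_1}\rho\|_{\dot H^{-1}_0}^2$ and $h(t):=\|\trho\|_{L^2}^2$, so that the hypothesis \eqref{hasstime} reads $\int_s^r g\,dt \le N^{-1}\int_s^r h\,dt$. The naive idea of applying \eqref{GNN1} pointwise fails, because the pointwise version of \eqref{Hassump} need not hold at every $t$; only its time-integrated analogue is assumed. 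So I would partition $[s,r]$ into the ``mixed'' set $G:=\{t: g(t)\le (N/2)^{-1}h(t)\}$ on which the pointwise hypothesis of Proposition~\ref{prop:concentration326} holds (with $N$ replaced by $N/2$, which only affects universal constants), and the ``bad'' set $B:=[s,r]\setminus G$ on which $g(t)>(N/2)^{-1}h(t)$. On $B$ we have $h(t) < (N/2) g(t)$, hence $\int_B h\,dt < (N/2)\int_B g\,dt \le (N/2)\int_s^r g\,dt \le \tfrac12\int_s^r h\,dt$ by \eqref{hasstime}; therefore $\int_B h\,dt \le \int_G h\,dt$, i.e. at least half of $\int_s^r h\,dt$ lives on $G$. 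On $G$, Proposition~\ref{prop:concentration326} applies pointwise and gives $h(t)\le C N^{-1/4}\|\trho\|_{L^1}\|\nabla\trho\|_{L^2}$; integrating over $G$ and then bounding the left side from below by $\tfrac12\int_s^r h\,dt$ yields \eqref{GNNtime1} after adjusting the constant.

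Next I would derive \eqref{GNN} from \eqref{GNNtime1}. Since $\|\trho\|_{L^1}\le \|\trho\|_{L^1(\Omega)}\lesssim \rho_M$ (the elementary bound already used in the proof of Proposition~\ref{propen}, using $\rho\ge 0$ and conservation of mass), \eqref{GNNtime1} gives
\[
\int_s^r \|\trho\|_{L^2}^2\,dt \le C N^{-1/4}\rho_M \int_s^r \|\nabla\trho\|_{L^2}\,dt.
\]
Applying Cauchy--Schwarz to $\int_s^r \|\nabla\trho\|_{L^2}\,dt \le (r-s)^{1/2}\big(\int_s^r\|\nabla\trho\|_{L^2}^2\,dt\big)^{1/2}$ and rearranging,
\[
\int_s^r \|\nabla\trho\|_{L^2}^2\,dt \ge c\, N^{1/2}\rho_M^{-2}\,(r-s)^{-1}\Big(\int_s^r \|\trho\|_{L^2}^2\,dt\Big)^2.
\]
Finally, since $\nabla\rho = \nabla\brho + \nabla\trho = \p_{x_2}\brho + \nabla\trho$ and these pieces are orthogonal in $L^2(\Omega)$, we have $\|\nabla\rho\|_{L^2}^2 = \|\p_{x_2}\brho\|_{L^2}^2 + \|\nabla\trho\|_{L^2}^2 \ge \|\nabla\trho\|_{L^2}^2$, which upgrades the last display to \eqref{GNN}.

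I expect the only genuine subtlety to be the bookkeeping in the time-splitting argument: making sure that replacing $N$ by $N/2$ (or any fixed fraction) in the hypothesis of Proposition~\ref{prop:concentration326} is harmless since $N^{-1/4}$ changes only by a universal factor, and that the measure-theoretic sets $G$ and $B$ are well-defined (they are, since $g$ and $h$ are continuous in $t$ on $(0,T_*)$ for a regular solution, and the endpoint behaviour is irrelevant to the integrals). Everything else is a routine combination of Cauchy--Schwarz, the elementary $L^1$ bound on $\trho$, and orthogonality of the Fourier decomposition $\rho=\brho+\trho$. No new idea beyond Proposition~\ref{prop:concentration326} itself is needed; the corollary is purely its integrated consequence.
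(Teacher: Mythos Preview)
Your proof is correct and follows essentially the same approach as the paper: the paper's set $S=\{t:\|\p_{x_1}\rho\|_{\dot H_0^{-1}}^2\le 2N^{-1}\|\trho\|_{L^2}^2\}$ is exactly your set $G$, the argument that $\int_S\|\trho\|_{L^2}^2\,dt\ge\tfrac12\int_s^r\|\trho\|_{L^2}^2\,dt$ is the same (the paper phrases it by contradiction), and the derivation of \eqref{GNN} via Cauchy--Schwarz, the bound $\|\trho\|_{L^1}\lesssim\rho_M$, and the orthogonality $\|\nabla\rho\|_{L^2}^2\ge\|\nabla\trho\|_{L^2}^2$ is identical. One minor note: the $L^1$ bound on $\trho$ is actually established in the proof of this corollary in the paper (not in Proposition~\ref{propen}, which only uses $\|\brho-\rho_M\|_{L^1}\lesssim\rho_M$), but your justification via $\rho\ge0$ and mass conservation is correct.
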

\begin{proof}
Let us denote $S$ the set of times $t$ in $[s,r]$ such that $\|\p_{x_1} \rho (\cdot, t)\|_{\dot H_0^{-1}}^2 \leq 2 N^{-1} \|\tilde{\rho}(\cdot, t)\|_{L^2}^2.$
Then $\int_S \|\tilde{\rho}\|_{L^2}^2 \,dt \geq \frac12 \int_s^r \|\tilde{\rho}\|_{L^2}^2 dt$, since otherwise
\[ \frac12 \int_s^r \|\tilde{\rho}\|_{L^2}^2 < \int_{[s,r] \setminus S} \|\tilde{\rho}\|_{L^2}^2 \,dt \leq \frac{N}{2} \int_{[s,r] \setminus S} \|\p_{x_1} \rho\|_{\dot H^{-1}_{0}(\Omega)}^2 \,dt, \]
a contradiction with \eqref{hasstime}. On the other hand by \eqref{GNN1}, for every $t \in S$ we have $\|\trho\|_{L^2}^2 \le CN^{-1/4} \|\trho\|_{L^1} \|\nabla \trho\|_{L^2}.$
Hence
\begin{align*}
\int_s^r \|\tilde{\rho}\|_{L^2}^2 \,dt \leq 2 \int_S \|\tilde{\rho}\|_{L^2}^2 \,dt \leq 2C N^{-1/4} \int_S \|\trho\|_{L^1} \|\nabla \trho\|_{L^2}\,dt.
\end{align*}
This proves \eqref{GNNtime1}, where we relabeled the constant $2C$ back to $C$.

It remains to prove \eqref{GNN}. Applying Cauchy-Schwarz inequality to \eqref{GNNtime1}, and using $ \|\nabla \rho\|_{L^2}^2 = \|\nabla \trho\|_{L^2}^2 + \|\p_{x_2}\brho\|_{L^2}^2 \geq \|\nabla \trho\|_{L^2}^2
$, we have
\begin{equation}\label{temp00}
 \int_{s}^{r}\|\nabla \rho\|_{L^2}^2 \, dt \geq \int_{s}^{r}\|\nabla \trho\|_{L^2}^2 \, dt  \geq c N^{1/2} \left(\int_{s}^{r}\|\trho\|_{L^2}^2 \, dt\right)^2 \left(\int_{s}^{r} \|\trho\|_{L^1}^2 dt \right)^{-1} .
\end{equation}

Next we claim that $\|\trho\|_{L^1} \lesssim \rho_M$. Using the definition of $\brho$ and the fact that $\brho,\rho \ge 0$ in $\Omega$,
$$
\|\brho\|_{L^1(\Omega)} = \int_\T \int_0^\pi \frac{1}{2\pi}\int_\T \rho(x_1,x_2)dx_1 dx_2 dz = \int_0^\pi\int_\T \rho(x_1,x_2)dx_1 dx_2 = \|\rho_0\|_{L^1}.
$$
Then triangle inequality gives
$
\|\trho\|_{L^1} \le \|\brho\|_{L^1(\Omega)} + \|\rho\|_{L^1} = 2\|\rho_0\|_{L^1} = 4\pi^2\rho_M.
$ Substituting this estimate into \eqref{temp00} implies \eqref{GNN}, thus we finish the proof.
\end{proof}

In the following, recall that $N_0$ is always assumed to be sufficiently large; its size will be determined explicitly in the proof and
will only depend on the initial datum $\rho_0$ and the parameter $g$.

The next lemma relates the diffusion term appearing in \eqref{eq:dtE} to a lower bound for dissipation in $\rho$. More precisely, we have:
\begin{lem}
    \label{lem:diffusion}
    Given any $N \ge N_0$, assume that $2^{N-1} \le \|\rho(\cdot,t) - \rho_M\|_{L^2}^2 \le 2^{N+2}$ for $t \in (s, r)$, where $0 \le s < r < T_*$. Moreover, suppose that
\begin{equation}\label{aux427}
    \left|\int_{s}^{r} \int_\Omega \p_{x_2}\rho dx dt\right| = \Lambda(r - s)
    \end{equation}
    for some $\Lambda >0.$
    Then there exists a universal constant $c>0$ such that
    \begin{equation}\label{aux326b}
    \int_{s}^{r}\|\nabla \rho\|_{L^2}^2 dt \ge c \rho_M^{-1} \Lambda^3(r - s).
    \end{equation}
    \end{lem}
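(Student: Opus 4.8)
The plan is to extract a lower bound on the dissipation $\int_s^r \|\nabla\rho\|_{L^2}^2\,dt$ purely from control on the ``diffusion'' term $\int_s^r\int_\Omega \partial_{x_2}\rho\,dx\,dt$. The first observation is that the spatial integral $\int_\Omega \partial_{x_2}\rho\,dx$ is really a boundary quantity: since $\Omega = \mathbb{T}\times[0,\pi]$, integrating in $x_1$ gives $\int_\Omega \partial_{x_2}\rho\,dx = 2\pi\big(\bar\rho(\pi) - \bar\rho(0)\big)$, so \eqref{aux427} says that the average-in-time of $|\bar\rho(\pi)-\bar\rho(0)|$ is comparable to $\Lambda$. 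The strategy is then to show that if the trace difference of the $x_1$-average $\bar\rho$ at the two horizontal boundaries is large on average, then $\bar\rho$ (hence $\rho$) must have large $\dot H^1$ semi-norm on average, which is exactly what \eqref{aux326b} asserts (noting $\|\nabla\rho\|_{L^2}^2 \ge \|\partial_{x_2}\bar\rho\|_{L^2}^2$ by orthogonality of $\bar\rho$ and $\tilde\rho$).

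The core of the argument is a one-dimensional interpolation/trace inequality: for a function $h = \bar\rho(\cdot,x_2) \in H^1([0,\pi])$, one controls the trace difference $|h(\pi)-h(0)|$ by a combination of $\|h\|_{L^1([0,\pi])}$ and $\|\partial_{x_2} h\|_{L^2([0,\pi])}$. Indeed, writing $h(\pi)-h(0) = \int_0^\pi \partial_{x_2}h\,dx_2$ is too lossy; instead I would use that for any $x,y\in[0,\pi]$, $|h(\pi)-h(0)| \le |h(\pi)-h(y)| + |h(y)-h(x)| + |h(x)-h(0)|$, average $x$ over a small interval near $0$ and $y$ over a small interval near $\pi$ of length $\ell$, bound the ``near-boundary'' increments by $\ell^{1/2}\|\partial_{x_2}h\|_{L^2}$ via Cauchy–Schwarz, and bound the averaged middle term $\frac{1}{\ell^2}\int\int |h(y)-h(x)|\,dx\,dy \lesssim \ell^{-1}\|h\|_{L^1}$ trivially (each of $h(x),h(y)$ contributes $\ell^{-1}\|h\|_{L^1}$ after one more average). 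Optimizing the split gives a bound of the shape $|h(\pi)-h(0)| \lesssim \ell^{-1}\|h\|_{L^1} + \ell^{1/2}\|\partial_{x_2}h\|_{L^2}$, and optimizing over $\ell$ yields $|h(\pi)-h(0)|^{3} \lesssim \|h\|_{L^1}\,\|\partial_{x_2}h\|_{L^2}^2$, i.e. $\|\partial_{x_2}h\|_{L^2}^2 \gtrsim \|h\|_{L^1}^{-1}|h(\pi)-h(0)|^3$. Applying this with $h = \bar\rho - \rho_M$ (so $\|h\|_{L^1([0,\pi])} \lesssim \rho_M$ by the elementary bound used already in the paper, and $h(\pi)-h(0) = \bar\rho(\pi)-\bar\rho(0)$) gives the pointwise-in-time inequality $\|\partial_{x_2}\bar\rho(\cdot,t)\|_{L^2}^2 \gtrsim \rho_M^{-1}|\int_\Omega \partial_{x_2}\rho\,dx|^3$ up to the correspondence between one- and two-dimensional $L^2$ norms.

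Finally, integrating in $t$ over $(s,r)$ and using Jensen's inequality on the convex function $\xi \mapsto \xi^3$ to pass from $\int_s^r |\int_\Omega \partial_{x_2}\rho\,dx|^3\,dt$ to $(r-s)\big(\frac{1}{r-s}\int_s^r |\int_\Omega \partial_{x_2}\rho\,dx|\,dt\big)^3 = (r-s)\Lambda^3$, combined with $\|\nabla\rho\|_{L^2}^2 \ge \|\partial_{x_2}\bar\rho\|_{L^2}^2$, yields \eqref{aux326b}. I expect the main obstacle to be making the one-dimensional trace-interpolation inequality clean and getting the exponents to land exactly at the cubic power of $\Lambda$ with only a $\rho_M^{-1}$ prefactor — in particular being careful that the $L^1$ norm (rather than $L^2$) of $\bar\rho-\rho_M$ is what appears, since that is what is bounded by $\rho_M$; the hypothesis $2^{N-1}\le\|\rho-\rho_M\|_{L^2}^2$ is presumably not needed for this lemma (or only to ensure $r<T_*$ and regularity), and the upper bound $\|\rho-\rho_M\|_{L^2}^2\le 2^{N+2}$ plays no role in the dissipation lower bound itself.
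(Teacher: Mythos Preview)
Your proof is correct and in fact slightly cleaner than the paper's. Both arguments rest on the same mechanism: the spatial integral $\int_\Omega \partial_{x_2}\rho\,dx$ is a boundary trace difference, and if this is large while the $L^1$ mass is fixed $\sim\rho_M$, then there must be a slice close to the boundary where the trace is small, so the fundamental theorem of calculus plus Cauchy--Schwarz forces $\|\partial_{x_2}\rho\|_{L^2}$ to be large. The paper carries this out directly on the time-integrated quantities: it uses mass conservation to find, by pigeonhole, a height $y\in[\pi-2\rho_M\Lambda^{-1},\pi]$ at which $\int_s^r\int_\T \rho(t,x_1,y)\,dx_1\,dt \le \tfrac{\Lambda}{2}(r-s)$, and then applies H\"older over the thin strip $\T\times[y,\pi]\times[s,r]$. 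Because the strip has to fit inside $[0,\pi]$, the paper needs to treat small $\Lambda$ separately, invoking the assumption $\|\rho-\rho_M\|_{L^2}^2 \ge 2^{N-1}$ together with Poincar\'e.

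Your route packages the same idea as the pointwise-in-time one-dimensional inequality $|h(\pi)-h(0)|^3 \lesssim \|h\|_{L^1}\|h'\|_{L^2}^2$ (applied to $h=\bar\rho$ or $h=\bar\rho-\rho_M$), followed by Jensen in $t$. The optimization over the auxiliary length $\ell$ automatically covers all sizes of $\Lambda$, so you correctly observe that the hypothesis $\|\rho-\rho_M\|_{L^2}^2\ge 2^{N-1}$ is not actually needed. One small cosmetic point: in your Jensen step you write ``$=(r-s)\Lambda^3$'', but $\Lambda$ is defined via the absolute value of the time integral, not the time integral of the absolute value; the inequality still goes the right way since $\tfrac{1}{r-s}\int_s^r|\cdot|\,dt \ge \Lambda$.
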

    \begin{proof}
    We first note that the diffusion term can be rewritten as follows:
    \begin{equation}
    \label{eq:diffrewrite}
    \int_{s}^{r}\int_\Omega \p_{x_2}\rho dx dt = \int_{s}^{r}\int_\T \left(\rho(t, x_1, \pi) - \rho(t, x_1,0)\right) dx_1 dt,
    \end{equation}
    where we integrated in $x_2$. In view of \eqref{eq:diffrewrite}, we may without loss of generality assume that $\int_{s}^{r} \int_\Omega \p_{x_2}\rho dx dt \ge 0$, since we may replace $\rho(x_1,\pi)$ by $\rho(x_1,0)$ below to treat the other case with a parallel argument.

    By \eqref{eq:diffrewrite}, \eqref{aux427}, and $\rho \ge 0$, we have
        \begin{equation}
        \label{est:contrassumption}
        \int_{s}^{r}\int_\T \rho(t, x_1, \pi) dx_1 dt \ge \Lambda (r - s).
        \end{equation}
        We also recall the fact that by conservation of mass,
        $$
        \int_{s}^{r} \int_\Omega \rho(t, x_1, x_2) dx dt = \rho_M (r - s).
        $$
        Note that $\int_s^r \|\nabla \rho\|_{L^2}^2 \, dt \geq c 2^N (r-s)$ by Poincar\'e inequality, so we need only consider the situation where $\Lambda$ is sufficiently large, in particular
        $\Lambda > \frac{2\pi}{\rho_M}$ (otherwise \eqref{aux326b} is automatic if $N_0$ is sufficiently large).
        We claim that there exists $y \in [\pi - 2\rho_M\Lambda^{-1}, \pi] \subset [0,\pi]$ such that $\int_{s}^{r} \int_\T \rho(t, x_1, y) dx dt \le \frac{\Lambda}{2}(r - s)$.
        Indeed, suppose it were not the case. Then we must have
        {
        \begin{align*}
            \int_{s}^{r} \int_\Omega \rho(t, x_1, x_2) dx dt &\ge \int_{s}^{r}\int_\T \int_{\pi - 2\rho_M\Lambda^{-1}}^{\pi}\rho(t,x_1,x_2)dx_2dx_1 dt\\
            &> 2\rho_M \Lambda^{-1}\cdot\frac{\Lambda}{2}(r - s) = \rho_M (r - s),
        \end{align*}
        }
        which is a contradiction. Then combining the claim and \eqref{est:contrassumption} yields
        \begin{align*}
            \frac{\Lambda}{2}(r - s) &\le \int_{s}^{r} \int_\T (\rho(t,x_1,\pi) - \rho(t,x_1,y)) dx_1 dt = \int_{s}^{r} \int_\T \int_y^{\pi} \p_{x_2} \rho(t,x_1,x_2)dx_2dx_1 dt\\
            &\le c \rho_M^{1/2} (r - s)^{1/2}\Lambda^{-1/2}\left(\int_{s}^{r}\|\nabla \rho\|_{L^2}^2 dt\right)^{1/2},
        \end{align*}
        where we used H\"older's inequality in the final step. The proof is completed by rearranging the inequality above.
    \end{proof}

The final lemma gives an estimate on the contribution of the Keller-Segel nonlinearity in \eqref{eq:dtE} to the potential energy $E(t).$
\begin{lem}
    \label{lem:ksnonlinear}
    Given any $N \ge N_0$, we assume that $2^{N-1} \le \|\rho(\cdot,t) - \rho_M\|_{L^2}^2 \le 2^{N+2}$ for $t \in (s, r)$, where $0 \le s < r < T_*$. Then there exists a universal constant $C$  such that
    \begin{equation}
        \label{est:ksnonlinear}
        \left|\int_{s}^{r}\int_\Omega \rho\p_{x_2}(-\Delta_N)^{-1}(\rho - \rho_M) dx dt\right| \le C \rho_M^{2/3} 2^{2N/3}(r - s).
    \end{equation}
     \end{lem}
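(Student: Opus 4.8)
The plan is to bound the Keller–Segel nonlinearity $\left|\int_s^r \int_\Omega \rho\,\partial_{x_2}(-\Delta_N)^{-1}(\rho-\rho_M)\,dx\,dt\right|$ pointwise in time and then integrate over $[s,r]$. Fixing $t$, write $w := (-\Delta_N)^{-1}(\rho-\rho_M)$, so that $\partial_{x_2} w$ is (a component of) a Riesz-type transform of $\rho - \rho_M$. I would first replace $\rho$ in the integrand by $\rho - \rho_M$: since $\int_\Omega \partial_{x_2} w\, dx = 0$ (because $w$ satisfies Neumann boundary conditions, so $\int_\Omega \partial_{x_2}w\,dx = \int_\T (w(x_1,\pi)-w(x_1,0))\,dx_1$, which need not vanish — so here I must be a bit careful: either use that $\rho_M \int_\Omega \partial_{x_2}w$ can be controlled directly, or integrate by parts instead). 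The cleaner route is to integrate by parts in $x_2$: $\int_\Omega \rho\,\partial_{x_2}w\,dx = -\int_\Omega \partial_{x_2}\rho\, w\,dx + \text{(boundary terms)}$, but boundary terms are awkward, so I prefer to estimate $\|\partial_{x_2}w\|_{L^p}$ via elliptic regularity and use Hölder.

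The core estimate is therefore: $\left|\int_\Omega \rho\,\partial_{x_2}w\,dx\right| \le \|\rho\|_{L^{6/5}} \|\partial_{x_2}w\|_{L^6}$ (or some other Hölder pairing), combined with the Calderón–Zygmund / elliptic bound $\|\partial_{x_2}w\|_{L^6} = \|\partial_{x_2}(-\Delta_N)^{-1}(\rho-\rho_M)\|_{L^6} \lesssim \|\rho-\rho_M\|_{L^{6/5}}$ — wait, the gain of two derivatives gives $\|w\|_{W^{2,q}}\lesssim \|\rho-\rho_M\|_{L^q}$, hence $\|\partial_{x_2}w\|_{L^q}\lesssim \|\rho-\rho_M\|_{W^{1,q}}$ is not quite what I want. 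Instead I use that $\partial_{x_2}(-\Delta_N)^{-1}$ gains one net derivative: it maps $L^q \to W^{1,q}\hookrightarrow L^{q^*}$, but more simply it maps $L^2 \to \dot H^1 \hookrightarrow L^6$ in two dimensions, so $\|\partial_{x_2}w\|_{L^6}\lesssim \|\nabla w\|_{\dot H^1} \lesssim \|\rho-\rho_M\|_{L^2}$. Then Hölder gives $\left|\int_\Omega \rho\,\partial_{x_2}w\,dx\right| \le \|\rho\|_{L^{6/5}}\|\partial_{x_2}w\|_{L^6} \lesssim \|\rho\|_{L^{6/5}}\|\rho-\rho_M\|_{L^2}$. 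Now interpolate $\|\rho\|_{L^{6/5}}$ between $L^1$ (mass, $\lesssim \rho_M$) and $L^2$: $\|\rho\|_{L^{6/5}} \le \|\rho\|_{L^1}^{2/3}\|\rho\|_{L^2}^{1/3} \lesssim \rho_M^{2/3}\|\rho\|_{L^2}^{1/3}$, and $\|\rho\|_{L^2}\le \|\rho-\rho_M\|_{L^2} + \sqrt{2\pi^2}\rho_M \lesssim \|\rho-\rho_M\|_{L^2}$ once $N\ge N_0$. Putting this together, the integrand is $\lesssim \rho_M^{2/3}\|\rho-\rho_M\|_{L^2}^{4/3}$, and since $\|\rho-\rho_M\|_{L^2}^2 \le 2^{N+2}$ on $(s,r)$, this is $\lesssim \rho_M^{2/3} 2^{2N/3}$. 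Integrating over $[s,r]$ yields \eqref{est:ksnonlinear}.

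The main technical obstacle is ensuring the elliptic estimate $\|\partial_{x_2}(-\Delta_N)^{-1}(\rho-\rho_M)\|_{L^6(\Omega)} \lesssim \|\rho-\rho_M\|_{L^2(\Omega)}$ holds with the correct handling of the Neumann boundary conditions on the mixed domain $\Omega = \T\times[0,\pi]$; I would verify it on the Fourier (Neumann eigenfunction) side, where $\widehat{\partial_{x_2}w}_N$ has symbol roughly $k_2/(k_1^2+k_2^2)$ times $\widehat{(\rho-\rho_M)}_N$, which is bounded, so $\|\nabla w\|_{\dot H^1}^2 = \sum_k |k|^2 |k|^{-4}|\widehat{(\rho-\rho_M)}_N(k)|^2 \le \|\rho-\rho_M\|_{L^2}^2$, and then $\dot H^1(\Omega)\hookrightarrow L^6(\Omega)$ by Sobolev embedding in two dimensions. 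A minor point to watch is the exact Hölder exponent pairing and whether one prefers $L^{6/5}$–$L^6$ or, say, $L^{4/3}$–$L^4$; any choice that lets the $L^1$ mass bound absorb the bulk of the norm while the remaining factor interpolates against $L^2$ will give the stated $\rho_M^{2/3}2^{2N/3}$ scaling — the exponent $2/3$ on $\rho_M$ and $2N/3$ on the dyadic scale are precisely what this interpolation produces.
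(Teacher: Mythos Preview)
Your proposal is correct and follows essentially the same strategy as the paper: bound the integrand pointwise in time via H\"older, use elliptic regularity plus Sobolev embedding to control $\partial_{x_2}(-\Delta_N)^{-1}(\rho-\rho_M)$ by $\|\rho-\rho_M\|_{L^2}$, and interpolate an $L^{6/5}$ norm between $L^1$ (mass) and $L^2$ to produce the $\rho_M^{2/3}\|\rho-\rho_M\|_{L^2}^{4/3}$ scaling. The only cosmetic difference is the choice of H\"older pairing: the paper uses $L^2$--$L^2$ and then $W^{1,6/5}\hookrightarrow L^2$ together with the $W^{2,6/5}$ elliptic estimate, whereas you use the dual $L^{6/5}$--$L^6$ pairing with $H^1\hookrightarrow L^6$ and the $H^2$ elliptic estimate; both routes yield the identical bound.
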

    \begin{proof}
        By standard Sobolev embedding, $L^p$ elliptic estimate for the Neumann problem, and Gagliardo-Nirenberg inequality, we have:
        \begin{align*}
            \left|\int_\Omega \rho \p_{x_2}(-\Delta_N)^{-1}(\rho - \rho_M)\right| &\le \|\rho\|_{L^2}\|\p_{x_2}(-\Delta_N)^{-1}(\rho - \rho_M)\|_{L^2}\\
            &\le C(\|\rho - \rho_M\|_{L^2} + \rho_M)\|\p_{x_2}(-\Delta_N)^{-1}(\rho - \rho_M)\|_{W^{1,6/5}}\\
            &\le C(\|\rho - \rho_M\|_{L^2} + \rho_M)\|\rho - \rho_M\|_{L^{6/5}}\\
            &\le C\rho_M^{2/3}\|\rho - \rho_M\|_{L^2}^{1/3} (\|\rho - \rho_M\|_{L^2} + \rho_M).
        \end{align*}
Here the choice of Sobolev space $W^{1,6/5}$ is only a matter of convenience. Choosing $N_0$ sufficiently large, we have
        $$
        \left|\int_\Omega \rho \p_{x_2}(-\Delta_N)^{-1}(\rho - \rho_M)\right| \le C\rho_M^{2/3}\|\rho - \rho_M\|_{L^2}^{4/3} \le C\rho_M^{2/3}2^{2N/3}.
        $$
        The proof is finished by integrating over $(s, r)$.
    \end{proof}

\subsection{Estimates on the good intervals}
In this section, our ultimate goal is to show the following key estimate:
\begin{prop}
\label{lem:good}
    Assume that $N \ge N_0$, $N_0$ is sufficiently large, and $(t_k, t_{k+1})$ is a good interval of level $N$. Then there exists a universal constant $c_1 >0$  such that
    \begin{equation}
        \label{est:good}
        E(t_{k+1}) - E(t_k) \le -\frac{c_1 g}{N}.
    \end{equation}
\end{prop}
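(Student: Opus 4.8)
The plan is to integrate the identity \eqref{eq:dtE} for $E'(t)$ over the good interval and show that the ``main term'' is both (i) bounded above by $-c_1 g/N$ and (ii) strictly dominant over the diffusion and Keller--Segel contributions. Write $\tau := t_{k+1}-t_k$ and $D := \int_{t_k}^{t_{k+1}}\|\p_{x_1}\rho\|_{\dot H_0^{-1}}^2\,dt$. On a good interval of level $N$ the $L^2$ norm squared rises from $2^N$ to $2^{N+1}$, so Corollary~\ref{timecor} gives $\tau \ge c_0 2^{-N}$, and $2^{N-1}\le \|\rho-\rho_M\|_{L^2}^2 \le 2^{N+2}$ holds throughout, so Lemmas~\ref{lem:diffusion}, \ref{lem:ksnonlinear} and Lemma~\ref{lem_trho}(a) are all applicable. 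Thus it suffices to show $gD \ge \tfrac{1}{2}gN^{-1}2^{N-2}\tau$, together with suitable control of the two remaining terms.

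First I would record a crude upper bound on the dissipation: integrating \eqref{est:naive1} over $(t_k,t_{k+1})$ and discarding the (negative) net change of $\|\rho-\rho_M\|_{L^2}^2$, and choosing $N_0$ large enough that $2^{2N}$ absorbs $\rho_M 2^N$, gives $\int_{t_k}^{t_{k+1}}\|\nabla\rho\|_{L^2}^2\,dt \le C 2^{2N}\tau$. Feeding this into Lemma~\ref{lem:diffusion} bounds the diffusion term by $|\int_{t_k}^{t_{k+1}}\int_\Omega \p_{x_2}\rho\,dx\,dt| \le C\rho_M^{1/3} 2^{2N/3}\tau$, while Lemma~\ref{lem:ksnonlinear} bounds the Keller--Segel term by $C\rho_M^{2/3} 2^{2N/3}\tau$. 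Hence $E(t_{k+1})-E(t_k) \le -gD + C\rho_M^{2/3} 2^{2N/3}\tau$, and everything reduces to a lower bound on $D$.

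The heart of the matter is a dichotomy according to whether $D \le N^{-1}\int_{t_k}^{t_{k+1}}\|\tilde\rho\|_{L^2}^2\,dt$. If this inequality holds, then Corollary~\ref{GNNcor} applies with $(s,r)=(t_k,t_{k+1})$; combined with Lemma~\ref{lem_trho}(a), which gives $\int_{t_k}^{t_{k+1}}\|\tilde\rho\|_{L^2}^2\,dt \ge 2^{N-2}\tau$ on a good interval, estimate \eqref{GNN} forces $\int_{t_k}^{t_{k+1}}\|\nabla\rho\|_{L^2}^2\,dt \gtrsim \rho_M^{-2} N^{1/2} 2^{2N}\tau$. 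This contradicts the crude bound $\int_{t_k}^{t_{k+1}}\|\nabla\rho\|_{L^2}^2\,dt \le C 2^{2N}\tau$ once $N \ge N_0$ with $N_0$ chosen (in terms of $\rho_M$ only) so that $N_0^{1/2} > C\rho_M^2$. Therefore this case is vacuous, and we always have $D > N^{-1}\int_{t_k}^{t_{k+1}}\|\tilde\rho\|_{L^2}^2\,dt \ge N^{-1} 2^{N-2}\tau$, again by Lemma~\ref{lem_trho}(a).

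To conclude, combine the two displays: $E(t_{k+1})-E(t_k) \le -gD + C\rho_M^{2/3}2^{2N/3}\tau$ with $gD \ge gN^{-1}2^{N-2}\tau$. Since $N^{-1}2^{N/3}\to\infty$, enlarging $N_0$ once more (now depending on $g$ and $\rho_M$) makes $C\rho_M^{2/3}2^{2N/3}\tau \le \tfrac12 gD$, so $E(t_{k+1})-E(t_k) \le -\tfrac12 gD \le -\tfrac12 gN^{-1}2^{N-2}\tau \le -c_1 g/N$ using $\tau \ge c_0 2^{-N}$. I expect the main obstacle to be precisely the dichotomy step: ruling out the small-$D$ case hinges on the quantitative improvement of Nash's inequality (Proposition~\ref{prop:concentration326} and its time-averaged form Corollary~\ref{GNNcor}) being strong enough to beat the $C2^{2N}$ dissipation upper bound by the growing factor $N^{1/2}$. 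One also needs to track carefully the finitely many largeness requirements on $N_0$ --- several depending only on $\rho_M$, and the final one on $g$ as well --- so that all of them can be met simultaneously.
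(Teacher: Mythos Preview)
Your proposal is correct and follows essentially the same route as the paper: you reconstruct inline the content of Lemma~\ref{lem:damping} (the dichotomy showing $D\gtrsim N^{-1}2^{N}\tau$ via Corollary~\ref{GNNcor} and Lemma~\ref{lem_trho}(a)) and Lemma~\ref{lem:diffgood} (the diffusion bound via Lemma~\ref{lem:diffusion} and the crude dissipation upper bound), then combine them with Lemma~\ref{lem:ksnonlinear} exactly as the paper does. The only cosmetic difference is that your dichotomy threshold uses $\int\|\trho\|_{L^2}^2\,dt$ rather than $\int\|\rho-\rho_M\|_{L^2}^2\,dt$, which in fact matches the hypothesis of Corollary~\ref{GNNcor} more cleanly.
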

To prove this key proposition, we will first obtain a lower bound of the main term in \eqref{eq:dtE}.
\begin{lem}
\label{lem:damping}
    Let $N_0$ be sufficiently large. Assume that $N \ge N_0$ and $(t_k, t_{k+1})$ is a good interval of level $N.$ Then
    \begin{equation}
        \label{est:damping}
        \int_{t_k}^{t_{k+1}}\|\p_{x_1} \rho\|_{\dot H^{-1}_0}^2 dt \ge N^{-1}\int_{t_k}^{t_{k+1}} \|\rho - \rho_M\|_{L^2}^2 dt > N^{-1}2^{N-1} (t_{k+1} - t_k).
    \end{equation}
    \end{lem}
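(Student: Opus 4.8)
The plan is to argue by contradiction. Suppose \eqref{est:damping} fails, so that the first inequality
\[
\int_{t_k}^{t_{k+1}}\|\p_{x_1}\rho\|_{\dot H^{-1}_0}^2\,dt \ge N^{-1}\int_{t_k}^{t_{k+1}}\|\rho-\rho_M\|_{L^2}^2\,dt
\]
is violated. In that case the time-averaged hypothesis \eqref{hasstime} of Corollary~\ref{GNNcor} holds on $(t_k,t_{k+1})$ (after noting $\|\p_{x_1}\rho\|_{\dot H^{-1}_0} = \|\p_{x_1}\trho\|_{\dot H^{-1}_0}$ since $\bar\rho$ has no $x_1$ dependence), so we obtain the lower bound \eqref{GNN} on $\int_{t_k}^{t_{k+1}}\|\nabla\rho\|_{L^2}^2\,dt$ in terms of $\big(\int_{t_k}^{t_{k+1}}\|\trho\|_{L^2}^2\,dt\big)^2$. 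On the other hand, since $(t_k,t_{k+1})$ is a good interval of level $N$, Lemma~\ref{lem_trho}(a) gives $\int_{t_k}^{t_{k+1}}\|\trho\|_{L^2}^2\,dt \ge 2^{N-2}(t_{k+1}-t_k)$. Plugging this into \eqref{GNN} yields a lower bound of the form $\int_{t_k}^{t_{k+1}}\|\nabla\rho\|_{L^2}^2\,dt \gtrsim \rho_M^{-2}N^{1/2}2^{2N}(t_{k+1}-t_k)$.

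The contradiction then comes from the elementary $L^2$ dissipation bound. On a good interval of level $N$ we have $\|\rho-\rho_M\|_{L^2}^2 \le 2^{N+1}$ throughout, so Corollary~\ref{timecor} (specifically \eqref{est:naive}) gives
\[
\frac{d}{dt}\|\rho-\rho_M\|_{L^2}^2 + \|\nabla\rho\|_{L^2}^2 \le C\|\rho-\rho_M\|_{L^2}^4 \le C\, 2^{2N+2},
\]
and integrating over $(t_k,t_{k+1})$, together with the fact that the increment of $\|\rho-\rho_M\|_{L^2}^2$ over a good interval is exactly $2^N>0$, gives $\int_{t_k}^{t_{k+1}}\|\nabla\rho\|_{L^2}^2\,dt \le C\,2^{2N}(t_{k+1}-t_k)$. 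Comparing with the lower bound $\gtrsim \rho_M^{-2}N^{1/2}2^{2N}(t_{k+1}-t_k)$ from the previous step, we reach a contradiction once $N^{1/2}$ exceeds a fixed multiple of $C\rho_M^2$, i.e. once $N_0$ is chosen large enough depending only on $\rho_M$. This establishes the first inequality in \eqref{est:damping}; the second inequality is immediate, since on a good interval of level $N$ we have $\|\rho-\rho_M\|_{L^2}^2 > 2^{N-1}$ for all $t\in(t_k,t_{k+1})$.

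The main obstacle I anticipate is bookkeeping the powers of $2^N$, $N$, and $\rho_M$ so that the two bounds on $\int\|\nabla\rho\|_{L^2}^2$ are genuinely contradictory for large $N_0$: the key point is that both bounds scale like $2^{2N}(t_{k+1}-t_k)$ in $N$ and in the interval length, so the only room for a contradiction is the extra factor $N^{1/2}$ gained from the quantitative Nash inequality of Proposition~\ref{prop:concentration326}. One must check that no competing term (in particular, nothing coming from the chemotactic nonlinearity, which has already been absorbed in passing from Proposition~\ref{lem:naive} to \eqref{est:naive}) reintroduces a term of comparable or larger size; this is exactly why Corollary~\ref{timecor} — valid precisely in the large-$L^2$-norm regime $\|\rho-\rho_M\|_{L^2}^2 \ge 2^{N_0}$ — is invoked rather than the raw inequality \eqref{est:naive1}.
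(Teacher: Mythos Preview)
Your argument is essentially the paper's own proof: assume the first inequality fails, combine Corollary~\ref{GNNcor} with Lemma~\ref{lem_trho}(a) to obtain $\int_{t_k}^{t_{k+1}}\|\nabla\rho\|_{L^2}^2\,dt \gtrsim \rho_M^{-2}N^{1/2}2^{2N}(t_{k+1}-t_k)$, and contradict this against the upper bound coming from integrating \eqref{est:naive}.

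One small correction: the observation $\p_{x_1}\rho=\p_{x_1}\trho$ does not by itself give \eqref{hasstime}, since the contradiction hypothesis has $\int\|\rho-\rho_M\|_{L^2}^2$ on the right, and $\|\rho-\rho_M\|_{L^2}^2 \ge \|\trho\|_{L^2}^2$ goes the wrong way. The fix (which the paper also leaves implicit) is to use Lemma~\ref{lem_trho}(a) together with $\|\rho-\rho_M\|_{L^2}^2<2^{N+1}$ on a good interval to see that $\int_{t_k}^{t_{k+1}}\|\rho-\rho_M\|_{L^2}^2\,dt \le 8\int_{t_k}^{t_{k+1}}\|\trho\|_{L^2}^2\,dt$, so \eqref{hasstime} holds with $N$ replaced by $N/8$, which only affects universal constants.
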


        \begin{proof}
 Note that it suffices to prove the first inequality of \eqref{est:damping}, since the second inequality directly follows from the fact that $\|\rho(\cdot,t) - \rho_M\|_{L^2}^2 > 2^{N-1}$ on the interval $(t_k, t_{k+1})$.
            To prove the first inequality, suppose on the contrary that
            $$
            \int_{t_k}^{t_{k+1}}\|\p_{x_1} \rho\|_{\dot H^{-1}_0}^2 \,dt < N^{-1}\int_{t_k}^{t_{k+1}} \|\rho - \rho_M\|_{L^2}^2 \,dt.
            $$
            We combine this with \eqref{GNN} and Lemma~\ref{lem_trho}(a) to obtain
            \begin{equation}\label{temp0}
            \int_{t_k}^{t_{k+1}} \|\nabla\rho\|_{L^2}^2 dt \ge c \rho_M^{-2} N^{1/2}2^{2N}(t_{k+1} - t_k).
            \end{equation}
            Integrating the energy estimate \eqref{est:naive} in $(t_k, t_{k+1})$, we have
            \begin{equation}\label{temp1}
            \begin{split}
                \|\rho (\cdot, t_{k+1}) - \rho_M\|_{L^2}^2 - \|\rho (\cdot, t_{k})  - \rho_M\|_{L^2}^2 &\le -\int_{t_k}^{t_{k+1}} \|\nabla \rho\|_{L^2}^2 \,dt + C_1\int_{t_k}^{t_{k+1}} \|\rho - \rho_M\|_{L^2}^4 \,dt\\
                &\le (-c \rho_M^{-2} N^{1/2} + C) 2^{2N}(t_{k+1} - t_k),
                \end{split}
            \end{equation}
            where the second inequality follows from \eqref{temp0} and the fact that $\|\rho(\cdot,t) - \rho_M\|_{L^2}^2 \leq 2^{N+1}$ for $t\in[t_k,t_{k+1}]$.
            Note that the right hand side of \eqref{temp1} would be negative if we choose $N_0$ sufficiently large so that $-c \rho_M^{-2} N_0^{1/2}+ C< 0$. However, by definition of a good interval at level $N$, the left hand side of \eqref{temp1} is equal to $2^N$, yielding a contradiction and thus finishing the proof.
    \end{proof}

The next lemma gives an upper bound of the diffusion term in \eqref{eq:dtE} on a good interval, which will be dominated by the main term when $N$ is sufficiently large.
\begin{lem}
\label{lem:diffgood}
     Let $N_0$ be sufficiently large. Assume that $N \ge N_0$ and $(t_k, t_{k+1})$ is a good interval of level $N$. Then
    \begin{equation}
        \label{est:diffgood}
        \left|\int_{t_k}^{t_{k+1}}\int_\Omega \p_{x_2} \rho dx dt \right| \le C \rho_M^{1/3} 2^{2N/3}(t_{k+1} - t_k),
    \end{equation}
    where $C$ is a universal constant.
\end{lem}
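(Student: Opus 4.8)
The plan is to bound the diffusion term $\int_{t_k}^{t_{k+1}}\int_\Omega \partial_{x_2}\rho\,dx\,dt$ by combining the rewriting \eqref{eq:diffrewrite} with a Gagliardo–Nirenberg-type estimate and the $L^2$-dissipation control that is available on a good interval. First I would recall that by Corollary~\ref{timecor}, on a good interval of level $N$ one has
\[
\int_{t_k}^{t_{k+1}}\|\nabla\rho\|_{L^2}^2\,dt \le \|\rho(\cdot,t_k)-\rho_M\|_{L^2}^2 - \|\rho(\cdot,t_{k+1})-\rho_M\|_{L^2}^2 + C\int_{t_k}^{t_{k+1}}\|\rho-\rho_M\|_{L^2}^4\,dt \le C2^{2N}(t_{k+1}-t_k),
\]
using that the left endpoint value is $2^N$, the right endpoint value is $2^{N+1}$, and $\|\rho-\rho_M\|_{L^2}^2<2^{N+1}$ throughout the interval. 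Thus the time-averaged $\dot H^1$ norm of $\rho$ is controlled by $2^{2N}$ on a good interval.

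Next I would estimate the boundary integral. Writing $\int_\Omega \partial_{x_2}\rho\,dx = \int_\T(\rho(t,x_1,\pi)-\rho(t,x_1,0))\,dx_1$ as in \eqref{eq:diffrewrite}, the task reduces to bounding the trace of $\rho$ on $\partial\Omega$. The standard trace/interpolation inequality gives $\|\rho(\cdot,x_2=\pi)\|_{L^1(\T)} \lesssim \|\rho\|_{L^1(\Omega)}^{1/2}\|\rho\|_{\dot H^1(\Omega)}^{1/2} + \|\rho\|_{L^1(\Omega)}$ (obtained, e.g., by writing $\rho(x_1,\pi) = \frac1\pi\int_0^\pi \rho\,dx_2 + \frac1\pi\int_0^\pi\int_{x_2}^\pi \partial_{x_2}\rho\,(\text{weight})\,dx_2$ and applying Cauchy–Schwarz); then integrating in $x_1$ and applying Cauchy–Schwarz in time together with mass conservation $\|\rho\|_{L^1}=2\pi^2\rho_M$ yields
\[
\left|\int_{t_k}^{t_{k+1}}\int_\Omega \partial_{x_2}\rho\,dx\,dt\right| \lesssim (t_{k+1}-t_k)^{1/2}\rho_M^{1/2}\Big(\int_{t_k}^{t_{k+1}}\|\nabla\rho\|_{L^2}^2\,dt\Big)^{1/2} + \rho_M(t_{k+1}-t_k).
\]
Inserting the dissipation bound $\int_{t_k}^{t_{k+1}}\|\nabla\rho\|_{L^2}^2\,dt \le C2^{2N}(t_{k+1}-t_k)$ gives $\lesssim \rho_M^{1/2}2^N(t_{k+1}-t_k) + \rho_M(t_{k+1}-t_k)$. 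This looks slightly worse than the claimed bound $\rho_M^{1/3}2^{2N/3}(t_{k+1}-t_k)$, so I would instead use the one-dimensional structure more carefully: work with the vertical average of the trace and interpolate using only the vertical derivative, or equivalently split $\rho = \brho + \trho$ and estimate the trace of each piece, using the sharper dissipation bounds on $\|\partial_{x_2}\brho\|_{L^2}^2$ and $\|\nabla\trho\|_{L^2}^2$ from \eqref{aux13a}. Alternatively — and this is likely the intended route — one bounds the trace via $\|\rho(\cdot,\pi)\|_{L^1(\T)} \lesssim \|\rho\|_{L^1(\Omega)}^{2/3}\|\rho\|_{\dot H^1(\Omega)}^{1/3}\cdot(\ldots)$ using an anisotropic trace inequality tuned so the powers of $2^N$ match; since $\|\rho-\rho_M\|_{L^2}^2 \le 2^{N+2}$ one may freely trade $L^2$ norms for powers of $2^N$, and the exponent $2/3$ here mirrors exactly the one appearing in Lemma~\ref{lem:ksnonlinear}.

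The main obstacle I anticipate is getting the exponent of $2^N$ down to $2/3$ rather than the naive $1$: a crude trace estimate combined with the $\dot H^1$-dissipation bound only gives $2^N$, which would not be dominated by the main term $N^{-1}2^{N-1}$ from Lemma~\ref{lem:damping}. The resolution is that the relevant combination is $\|\nabla\rho\|_{L^2}^{1/3}$ (not $\|\nabla\rho\|_{L^2}$) against two powers of the $L^{6/5}$ or $L^1$ mass, exactly paralleling the computation in the proof of Lemma~\ref{lem:ksnonlinear}; one writes the boundary trace in terms of $\partial_{x_2}(-\Delta_N)^{-1}$ or uses a trace inequality of the form $\|f\|_{L^1(\partial\Omega)} \lesssim \|f\|_{W^{1,p}(\Omega)}$ with $p$ chosen so that, after Gagliardo–Nirenberg, the power of $\|\rho-\rho_M\|_{L^2}$ is $4/3$. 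Once the pointwise-in-time bound $|\int_\Omega \partial_{x_2}\rho\,dx| \le C\rho_M^{1/3}\|\rho-\rho_M\|_{L^2}^{4/3} + (\text{lower order})$ is in hand, the lemma follows immediately by integrating over $(t_k,t_{k+1})$ and using $\|\rho-\rho_M\|_{L^2}^2 \le 2^{N+2}$, absorbing lower-order terms into the main one by taking $N_0$ large.
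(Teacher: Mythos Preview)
Your proposal has a genuine gap. You correctly identify that the naive trace-plus-dissipation estimate yields only $\rho_M^{1/2}2^N(t_{k+1}-t_k)$, which is too weak, but none of your proposed remedies work as stated. The pointwise bound $|\int_\Omega \partial_{x_2}\rho\,dx| \le C\rho_M^{1/3}\|\rho-\rho_M\|_{L^2}^{4/3}$ is false: it contains no gradient term, and no such inequality can control a boundary trace (a function can have small $L^2$ norm and arbitrarily large boundary values). Your other suggestion, the trace bound with $\|\nabla\rho\|_{L^2}^{1/3}$ against ``two powers of the mass,'' also fails by scaling: since $\int_\Omega \partial_{x_2}\rho\,dx = 2\pi(\brho(\pi)-\brho(0))$ reduces to a one-dimensional pointwise value, and for mean-zero $f$ on $[0,\pi]$ the correct Gagliardo--Nirenberg inequality is $\|f\|_{L^\infty}\lesssim \|f\|_{L^1}^{1/3}\|f'\|_{L^2}^{2/3}$, the exponents must be $(1/3,2/3)$, not $(2/3,1/3)$.

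The paper's argument is entirely different and uses a tool you do not mention: Lemma~\ref{lem:diffusion}. Writing $\big|\int_{t_k}^{t_{k+1}}\int_\Omega \partial_{x_2}\rho\,dx\,dt\big| = \Lambda(t_{k+1}-t_k)$, that lemma gives the \emph{lower} bound $\int_{t_k}^{t_{k+1}}\|\nabla\rho\|_{L^2}^2\,dt \ge c\rho_M^{-1}\Lambda^3(t_{k+1}-t_k)$. Combining this with your (correct) \emph{upper} bound $\int_{t_k}^{t_{k+1}}\|\nabla\rho\|_{L^2}^2\,dt \le C2^{2N}(t_{k+1}-t_k)$ immediately yields $\Lambda \lesssim \rho_M^{1/3}2^{2N/3}$; the cubic dependence in Lemma~\ref{lem:diffusion} is exactly what produces the exponent $2/3$. (Incidentally, a direct approach \emph{can} succeed once the exponents are corrected: the 1D inequality above gives $|\int_\Omega \partial_{x_2}\rho\,dx| \lesssim \rho_M^{1/3}\|\nabla\rho\|_{L^2}^{2/3}$ pointwise in time, and H\"older in $t$ together with the dissipation bound recovers \eqref{est:diffgood}.)
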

    \begin{proof}
        Let us first write
        $$
        \left|\int_{t_k}^{t_{k+1}}\int_\Omega \p_{x_2} \rho dx dt \right| = \Lambda (t_{k+1} - t_k)
        $$
        for some $\Lambda > 0.$ We then apply Lemma \ref{lem:diffusion} to obtain that
        \begin{equation}\label{aux327a}
        \int_{t_k}^{t_{k+1}}\|\nabla \rho\|_{L^2}^2 dt \ge c \rho_M^{-1} \Lambda^3(t_{k+1} - t_k),
        \end{equation}
        for some $c > 0$. Integrating the naive energy estimate \eqref{est:naive} from $t_k$ to $t_{k+1}$ and using the inequality \eqref{aux327a}, we have
        \begin{align*}
            c \rho_M^{-1} \Lambda^3(t_{k+1} - t_k)&< \|\rho (\cdot, t_{k+1})  - \rho_M\|_{L^2}^2- \|\rho (\cdot, t_{k})  - \rho_M\|_{L^2}^2 + \int_{t_k}^{t_{k+1}} \|\nabla \rho\|_{L^2}^2 dt\\
            &\le C_1\int_{t_k}^{t_{k+1}} \|\rho - \rho_M\|_{L^2}^4 dt \le C2^{2N}(t_{k+1} - t_k),
        \end{align*}
        where we used that $\|\rho (\cdot, t_{k+1})  - \rho_M\|_{L^2}^2 - \|\rho (\cdot, t_{k})  - \rho_M\|_{L^2}^2 = 2^N > 0$ in the first inequality. Rearranging, we conclude that
        $$
        \Lambda \le C \rho_M^{1/3} 2^{2N/3},
        $$
        and the proof is complete.
    \end{proof}

Now we are ready to prove Proposition \ref{lem:good}.
\begin{proof}[Proof of Proposition \ref{lem:good}]
    Invoking Lemmas \ref{lem:ksnonlinear}, \ref{lem:damping} and \ref{lem:diffgood}, we obtain from \eqref{eq:dtE} that:
    \begin{align*}
        E(t_{k+1}) - E(t_k) &\le -g\int_{t_k}^{t_{k+1}} \|\p_{x_1} \rho\|_{\dot H^{-1}_0}^2 dt+ \left|\int_{t_k}^{t_{k+1}}\int_\Omega \p_{x_2} \rho dx dt\right| \\ &\quad + \left|\int_{t_k}^{t_{k+1}}\int_\Omega \rho\p_{x_2}(-\Delta_N)^{-1}(\rho - \rho_M) dxdt\right|\\
        &\le \left(-\frac{g}{2N} + C \rho_M^{1/3} 2^{-N/3} + C \rho_M^{2/3} 2^{-N/3}\right)2^N(t_{k+1} - t_k)
    \end{align*}
    for any $N \ge N_0$. Choosing $N_0$ sufficiently large so that $-\frac{g}{2N_0} + C\rho_M^{1/3}(1 +\rho_M^{1/3})2^{-N_0/3} \le -\frac{g}{4N_0}$, we conclude that
    $$
    E(t_{k+1}) - E(t_k) < -\frac{g}{4N}2^N(t_{k+1} - t_k) \le -\frac{c_1g}{N},
    $$
    by Corollary \ref{timecor}.
    \end{proof}

\subsection{Estimates on the bad intervals}
On the bad intervals, it seems difficult to rule out the situation that the potential energy $E(t)$ may increase. The goal of this section is to show that any growth is of lower order compared to the contributions from good intervals.
As we will see below, the main observation is that any increase in potential energy has to happen in an extremely short time if it were to increase. A precise statement is given as follows.
\begin{prop}
    \label{lem:fastdrop}
     Let $N_0$ be sufficiently large, which only depends on $\rho_M$ and $g$. Assume that $N \ge N_0$ and $(t_k, t_{k+1})$ is a bad interval of level $N$. Then there exists a constant $C_1(\rho_M)$ such that
     \begin{equation}
     \label{est:bad}
     E(t_{k+1}) - E(t_k) \le C_1(\rho_M) 2^{-N/3}.
     \end{equation}
\end{prop}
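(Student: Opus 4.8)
The plan is to split the bad interval $(t_k,t_{k+1})$ into two cases depending on whether $\trho$ satisfies the lower bound \eqref{eq_lemma1} from Lemma~\ref{lem_trho}, and bound the potential energy change by controlling the three terms in the evolution formula \eqref{eq:dtE}. In both cases I first record that on a bad interval of level $N$ we have $2^N < \|\rho-\rho_M\|_{L^2}^2 < 2^{N+2}$, so Lemmas~\ref{lem:ksnonlinear} and~\ref{lem:diffusion} apply with $s=t_k$, $r=t_{k+1}$. The Keller--Segel nonlinearity is always harmless: by Lemma~\ref{lem:ksnonlinear} its contribution to $E(t_{k+1})-E(t_k)$ is at most $C\rho_M^{2/3}2^{2N/3}(t_{k+1}-t_k)$, which together with the length bound $t_{k+1}-t_k < C\rho_M^4 2^{-2N}$ (to be established or invoked) gives $O(2^{-4N/3})$, well within \eqref{est:bad}. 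So the real work is controlling the diffusion term, and the strategy is to show the bad interval is so short that even the diffusion term cannot push $E$ up by more than $C(\rho_M)2^{-N/3}$.

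In \textbf{Case 1}, suppose $\trho$ does \emph{not} satisfy \eqref{eq_lemma1}, i.e. $\int_{t_k}^{t_{k+1}}\|\trho\|_{L^2}^2\,dt < 2^{N-2}(t_{k+1}-t_k)$. Then Lemma~\ref{lem_trho}(b) directly gives $t_{k+1}-t_k < C\rho_M^4 2^{-2N}$. Now estimate the diffusion term crudely: write $|\int_{t_k}^{t_{k+1}}\int_\Omega \p_{x_2}\rho\,dx\,dt| = \Lambda(t_{k+1}-t_k)$ and apply Lemma~\ref{lem:diffusion} to get $\int_{t_k}^{t_{k+1}}\|\nabla\rho\|_{L^2}^2\,dt \geq c\rho_M^{-1}\Lambda^3(t_{k+1}-t_k)$; on the other hand, integrating the naive estimate \eqref{est:naive} over the bad interval and using that the $L^2$ norm only drops (left side $\leq 0$, in fact equals $-2^N$ plus the allowed fluctuation) bounds $\int_{t_k}^{t_{k+1}}\|\nabla\rho\|_{L^2}^2\,dt \leq C2^{2N}(t_{k+1}-t_k)$, giving $\Lambda \leq C\rho_M^{1/3}2^{2N/3}$ exactly as in Lemma~\ref{lem:diffgood}. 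Hence the diffusion contribution to $E(t_{k+1})-E(t_k)$ is at most $\Lambda(t_{k+1}-t_k) \leq C\rho_M^{1/3}2^{2N/3}\cdot C\rho_M^4 2^{-2N} = C(\rho_M)2^{-4N/3}$, again far better than needed.

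In \textbf{Case 2}, suppose $\trho$ \emph{does} satisfy \eqref{eq_lemma1}, so $\int_{t_k}^{t_{k+1}}\|\trho\|_{L^2}^2\,dt \geq 2^{N-2}(t_{k+1}-t_k)$. Here I want a lower bound on the main (damping) term $-g\|\p_{x_1}\rho\|_{\dot H^{-1}_0}^2$ that prevents $E$ from rising. The argument mirrors Lemma~\ref{lem:damping}: either $\int_{t_k}^{t_{k+1}}\|\p_{x_1}\rho\|_{\dot H^{-1}_0}^2\,dt \geq N^{-1}\int_{t_k}^{t_{k+1}}\|\trho\|_{L^2}^2\,dt \geq N^{-1}2^{N-2}(t_{k+1}-t_k)$, in which case the main term alone beats the (upper bounds on the) diffusion and Keller--Segel terms once $N_0$ is large, forcing $E(t_{k+1})-E(t_k) \leq 0$; or the reverse inequality holds, in which case Corollary~\ref{GNNcor} (applied with $s=t_k$, $r=t_{k+1}$, the hypothesis \eqref{hasstime} now satisfied) yields $\int_{t_k}^{t_{k+1}}\|\nabla\rho\|_{L^2}^2\,dt \gtrsim \rho_M^{-2}N^{1/2}(t_{k+1}-t_k)^{-1}(\int_{t_k}^{t_{k+1}}\|\trho\|_{L^2}^2\,dt)^2 \gtrsim \rho_M^{-2}N^{1/2}2^{2N}(t_{k+1}-t_k)$, and comparing with the upper bound $\int_{t_k}^{t_{k+1}}\|\nabla\rho\|_{L^2}^2\,dt \leq C2^{2N}(t_{k+1}-t_k)$ from \eqref{est:naive} (using the left side of the integrated inequality is $\leq 2^{N+1}$, i.e. bounded, since on a bad interval the $L^2$ norm drops) gives a contradiction for $N_0$ large. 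So Case 2 produces $E(t_{k+1})-E(t_k)\leq 0$ outright. The main obstacle is bookkeeping in Case 2: making sure the $H^{-1}_0$-versus-$L^2$ dichotomy is set up with the correct constant (a factor of $2$ as in Corollary~\ref{GNNcor}'s proof) and that the ``integrated \eqref{est:naive} gives $\int\|\nabla\rho\|^2 \leq C2^{2N}(r-s)$'' step is legitimate on a bad interval — it is, because the $L^2$-norm difference across a bad interval is negative, so $\int_{t_k}^{t_{k+1}}\|\nabla\rho\|_{L^2}^2\,dt \leq C_1\int_{t_k}^{t_{k+1}}\|\rho-\rho_M\|_{L^2}^4\,dt \leq C2^{2N}(t_{k+1}-t_k)$. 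Combining the two cases, \eqref{est:bad} holds with $C_1(\rho_M)$ absorbing all the $\rho_M$-dependent constants.
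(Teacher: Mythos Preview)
Your overall architecture matches the paper's exactly: split according to whether \eqref{eq_lemma1} holds, and in Case~2 further split on whether \eqref{hasstime} holds. The gap is a sign error in your dissipation bound that breaks Case~2.

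You repeatedly assert that on a bad interval $\int_{t_k}^{t_{k+1}}\|\nabla\rho\|_{L^2}^2\,dt \leq C2^{2N}(t_{k+1}-t_k)$, justified by ``the $L^2$-norm difference across a bad interval is negative.'' This is backwards. Integrating \eqref{est:naive} gives
\[
\big(\|\rho(\cdot,t_{k+1})-\rho_M\|_{L^2}^2 - \|\rho(\cdot,t_k)-\rho_M\|_{L^2}^2\big) + \int_{t_k}^{t_{k+1}}\|\nabla\rho\|_{L^2}^2\,dt \leq C\int_{t_k}^{t_{k+1}}\|\rho-\rho_M\|_{L^2}^4\,dt,
\]
and the bracketed term equals $-2^N$, so rearranging yields
\[
\int_{t_k}^{t_{k+1}}\|\nabla\rho\|_{L^2}^2\,dt \leq 2^N + C\,2^{2N}(t_{k+1}-t_k).
\]
The extra $2^N$ is precisely the dissipation that drove the $L^2$ norm down; it cannot be discarded. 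As a consequence your bound $\Lambda \leq C\rho_M^{1/3}2^{2N/3}$ is not valid on bad intervals. The paper instead keeps both pieces and obtains the two-term estimate $\Lambda(t_{k+1}-t_k) \leq C\rho_M^{1/3}\big(2^{N/3}(t_{k+1}-t_k)^{2/3} + 2^{2N/3}(t_{k+1}-t_k)\big)$.

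More seriously, your Case~2(b) ``contradiction'' evaporates. Comparing the enhanced lower bound from Corollary~\ref{GNNcor} with the \emph{correct} upper bound gives
\[
c\rho_M^{-2}N^{1/2}2^{2N}(t_{k+1}-t_k) \leq 2^N + C\,2^{2N}(t_{k+1}-t_k),
\]
which is no contradiction: it yields the length bound $t_{k+1}-t_k \leq C\rho_M^2 N^{-1/2}2^{-N}$, i.e.\ $A:=2^N(t_{k+1}-t_k)\leq 1$ for $N_0$ large. Plugging $A\leq 1$ into the two-term diffusion bound then gives $E(t_{k+1})-E(t_k)\leq C(\rho_M)2^{-N/3}$, not $\leq 0$. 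Likewise in Case~2(a) you only get $\leq 0$ when $A\geq 1$; when $A<1$ the $2^{N/3}(t_{k+1}-t_k)^{2/3}$ diffusion piece is not beaten by the main term and again contributes $C(\rho_M)2^{-N/3}$. In short, the $2^{-N/3}$ in \eqref{est:bad} is not slack --- it is exactly what the $2^N$ dissipation budget forces through the $2^{N/3}(t_{k+1}-t_k)^{2/3}$ term when $A\sim 1$, and Case~2 does \emph{not} give $E(t_{k+1})-E(t_k)\leq 0$ outright.
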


     \begin{proof}
        First, we estimate the contribution to $E(t)$ from the diffusion term. Write
        \begin{equation}\label{temp2}
        \left|\int_{t_k}^{t_{k+1}}\int_\Omega \p_{x_2} \rho dx dt \right| = \Lambda (t_{k+1} - t_k)
        \end{equation}
        for some $\Lambda > 0$ (if $\Lambda =0$ there is nothing to do for this term).
        Let us apply Lemma \ref{lem:diffusion} and time-integrated version of \eqref{est:naive} to see that
        \begin{align*}
            c \rho_M^{-1} \Lambda^3(t_{k+1} - t_k) &\le \int_{t_k}^{t_{k+1}}\|\nabla \rho\|_{L^2}^2 dt \\ & \le \|\rho (\cdot, t_{k})  - \rho_M\|_{L^2}^2 - \|\rho (\cdot, t_{k+1})  - \rho_M\|_{L^2}^2 + C_1\int_{t_k}^{t_{k+1}} \|\rho- \rho_M\|_{L^2}^4 dt
          \\ & \le 2^N + C_12^{2N+4}(t_{k+1} - t_k).
        \end{align*}
        Taking power $\frac{1}{3}$ on both sides of the above inequality and substituting it into \eqref{temp2}, we have
        \begin{equation}
            \label{est:diffbad2}
            \left|\int_{t_k}^{t_{k+1}}\int_\Omega \p_{x_2} \rho dx dt \right| \le C\rho_M^{1/3}\left(2^{N/3}(t_{k+1} - t_k)^{2/3} + 2^{2N/3}(t_{k+1} - t_k)\right).
        \end{equation}
       Combining \eqref{est:diffbad2} with Lemma \ref{lem:ksnonlinear}, invoking \eqref{eq:dtE}, and choosing $N_0$ sufficiently large, we obtain
        \begin{align}
            \label{est:mainbad1}
            E(t_{k+1}) - E(t_k) &\le -g\int_{t_k}^{t_{k+1}}\|\p_{x_1}\rho\|_{\dot H^{-1}_0}^2 dt + C_0(\rho_M) (2^{N/3}(t_{k+1} - t_k)^{2/3} +2^{2N/3}(t_{k+1} - t_k)),
        \end{align}
        where $C_0(\rho_M) = C(1+\rho_M^{2/3}).$
        Now, we discuss the following dichotomy:

        \medskip

        \noindent\textbf{Case 1.} $\int_{t_k}^{t_{k+1}} \|\trho\|_{L^2}^2 dt < 2^{N-2} (t_{k+1}-t_k)$. In this case, Lemma~\ref{lem_trho}(b) yields that $t_{k+1}-t_k < C \rho_M^4 2^{-2N}$, so setting it into \eqref{est:mainbad1} immediately gives us $  E(t_{k+1}) - E(t_k) \leq C(\rho_M) 2^{-N}.$

        \medskip
        \noindent\textbf{Case 2.} $\int_{t_k}^{t_{k+1}} \|\trho\|_{L^2}^2 dt \geq 2^{N-2} (t_{k+1}-t_k)$. Denoting $A:= 2^{N} (t_{k+1}-t_k)$, we can rewrite \eqref{est:mainbad1} as
        \begin{equation}\label{ineq_A}
        E(t_{k+1}) - E(t_k) \le -g\int_{t_k}^{t_{k+1}}\|\p_{x_1}\rho\|_{\dot H^{-1}_0}^2 dt + C_0(\rho_M) 2^{-N/3} (A^{2/3} + A).
        \end{equation}
        Next we further discuss whether \eqref{hasstime} holds on the interval $(t_k, t_{k+1})$:

        \begin{itemize}
       \item  If \eqref{hasstime} holds, then \eqref{GNN} implies
       \[
              \int_{t_k}^{t_{k+1}}\|\nabla \rho\|_{L^2}^2 \, dt \geq c \rho_M^{-2} N^{1/2}  (t_{k+1}-t_k)^{-1} \left(\int_{t_k}^{t_{k+1}}\|\trho\|_{L^2}^2 \, dt\right)^2 \geq c\rho_M^{-2} N^{1/2} 2^{2N-4} (t_{k+1}-t_k).
	\]
        Integrating \eqref{est:naive} in $(t_k, t_{k+1})$, applying the above inequality and the fact that $\|\rho-\rho_M\|_{L^2}^2< 2^{N+2}$ in $(t_k,t_{k+1})$, we have
        \[
       -2^N= \|\rho-\rho_M\|_{L^2}^2(t_{k+1})- \|\rho-\rho_M\|_{L^2}^2(t_{k})\leq (-c\rho_M^{-2} N^{1/2} + C) 2^{2N} (t_{k+1}-t_k).
        \]
        If we choose $N_0$ large enough such that $c\rho_M^{-2} N_0^{1/2} \geq 2C$, for all $N\geq N_0$ we have
        \[
        t_{k+1}-t_k\leq  C \rho_M^2 N^{-1/2} 2^{-N}.
        \]
        This implies $A = 2^N (t_{k+1}-t_k) \leq  C \rho_M^2 N^{-1/2} $, which can be made less than one by choosing $N_0$ sufficiently large only depending on $\rho_M$. Applying $A\leq 1$ in \eqref{ineq_A} gives the desired inequality.

        \item If \eqref{hasstime} fails, then combining this assumption with the assumption in Case 2 gives
        \[
        \int_{t_k}^{t_{k+1}} \|\p_{x_1} \rho\|_{\dot H^{-1}_{0}}^2 \,dt  \geq  N^{-1}  \int_{t_k}^{t_{k+1}}  \|\trho\|_{L^2}^2 \,dt \geq N^{-1} 2^{N-2} (t_{k+1}-t_k) = \frac{1}{4}N^{-1} A.
        \]
        Putting it into \eqref{est:mainbad1} gives
        \[
         E(t_{k+1}) - E(t_k) \le -\frac{g}{4} N^{-1}A + C_0(\rho_M) 2^{-N/3} (A^{2/3} + A).
        \]
        If $A<1$, then we again have $E(t_{k+1}) - E(t_k) \le 2C_0(\rho_M) 2^{-N/3}.$ If $A>1$, then the inequality can be written as
        \[
        E(t_{k+1}) - E(t_k) \le -\frac{g}{4} N^{-1}A + 2C_0(\rho_M) 2^{-N/3} A = \left( -\frac{g}{4} N^{-1} + 2C_0(\rho_M) 2^{-N/3}\right) A.
        \]
        Since $N^{-1}$ decays slower than $2^{-N/3}$, the right hand side is negative for all $N\geq N_0$ with $N_0$ sufficiently large (which only depends on $g$ and $\rho_M$).
        \end{itemize}
        We have now showed that \eqref{est:bad} holds in all cases, and this finishes the proof.
        \end{proof}

\section{Proof of the main theorem}
In this section, we prove the global well-posedness of \eqref{eq:KSIPM}.

\begin{proof}[Proof of Theorem \ref{mainthm}]
Let us choose a sufficiently large $N_0\in\mathbb{N}$ for which all previous bounds work,
and in addition
\begin{equation}\label{aux327b}
c_1 g N^{-1} \geq C_1 2^{-N/3}
\end{equation}
for all $N \geq N_0$; here $c_1$ and $C_1$ are constants from the estimates \eqref{est:good} and \eqref{est:bad}
respectively.
 Next, find $Q\geq N_0$ such that
\[ c_1g \sum_{m = N_0}^Q \frac1m > \pi\|\rho_0\|_{L^1}; \]
recall that the right hand side is the maximal possible potential energy due to \eqref{aux13e}, and the potential energy is non-negative for all times.
Let $t_K < T_*$ be the first time when we have $\|\rho(\cdot, t_K) - \rho_M\|_{L^2}^2 = 2^{Q+1}.$
Then
\[ E(t_K)-E(t_1) = \sum_{j=1}^{K-1} (E(t_{j+1})-E(t_j)). \]
Due to Lemma \ref{timeint}, in this sum for each level $N = N_0,\dots, Q,$ the number of good intervals exceeds the number of bad intervals by one.
Then due to \eqref{est:good}, \eqref{est:bad}, and \eqref{aux327b}, we have
\[  E(t_K)-E(t_1) \leq -c_1 g\sum_{m = N_0}^Q \frac1m < -\pi\|\rho_0\|_{L^1}. \]
This implies that $E(t_K) < E(t_1) -\pi\|\rho_0\|_{L^1} < 0$, a contradiction.
\end{proof}
Observe that the above argument can be used to derive a quantitative upper bound on the size of $\|\rho(\cdot, t)- \rho_M\|_{L^2}$ depending on
$g$ and the initial data - but we do not pursue it here.

\noindent {\bf Acknowledgement.} ZH and AK have been partially supported by the NSF-DMS grants 2006372 and 2306726.
 YY has been partially supported by the NUS startup grant A-0008382-00-00, MOE Tier 1 grant A-0008491-00-00, and the Asian Young Scientist Fellowship.
 AK is grateful to Konstantin Kalinin and Felix Otto for a stimulating discussion. 
 The authors would like to thank Siming He for useful discussions, and the anonymous referee for helpful comments.

\end{document}